\newtheorem{Theorem}{Theorem}[section]
\newtheorem{Corollary}[Theorem]{Corollary}
\newtheorem{Lemma}[Theorem]{Lemma}
\newtheorem{Proposition}[Theorem]{Proposition}
 { \theoremstyle{definition}
\newtheorem{Definition}[Theorem]{Definition}

\newtheorem{Remark}[Theorem]{Remark} }
\numberwithin{equation}{section}
\def\mbb{\mathbb}
\def\mb{\mathbf}
\def\mr{\mathrm}
\def\mc{\mathcal}
\newlength{\equwidth}
\newcommand{\crd}{\mbox{$
\begin{picture}(9,8)(1.6,0.15)
\put(1,0.2){\mbox{$ D \hspace{-7.8pt} /$}}
\end{picture}$}}
\DeclareMathOperator{\Ric}{Ric}
\DeclareMathOperator{\tr}{tr}
\DeclareMathOperator{\id}{id}
\def\X{\mathfrak X}
\def\be{\beta}
\def\ga{\gamma}
\def\de{\delta}
\def\io{i}
\def\ka{\kappa}
\def\rh{\rho}
\def\Rho{\mbox{\textsf{P}}}
\def\si{\sigma}
\def\Ups{\Upsilon}
\def\ph{\varphi}
\def\om{\omega}
\def\Ga{\Gamma}
\def\De{\Delta}
\def\Th{\Theta}
\def\La{\Lambda}
\def\Om{\Omega}
\def\x{\times}
\def\pa{\partial}
\def\t{\otimes}
\def\del{\pa}
\def\delstar{\pa^*}
\def\goesto{\rightarrow}
\def\embed{\hookrightarrow}
\def\bdot{\bullet}
\def\na{\mathrm{\nabla}}
\def\lapl{\mathrm{\square}}
\def\im{\mathrm{im}\ }
\def\dim{\mathrm{dim\ }}
\def\G{\mathcal{G}}
\def\T{\mathcal{T}}
\def\ad{\mathrm{ad}}
\def\Ad{\mathrm{Ad}}
\DeclareMathOperator{\End}{End}
\DeclareMathOperator{\GL}{GL}
\DeclareMathOperator{\Spin}{Spin}
\DeclareMathOperator{\CSpin}{CSpin}
\def\CO{\mathrm{CO}}
\def\ti{\tilde}
\def\wt{\widetilde}
\def\rr{\ensuremath{\mathbb{R}}}
\def\HH{\ensuremath{\mathcal{H}}}
\def\one{\ensuremath{\id}}
\def\SL{\ensuremath{\mathrm{SL}}}
\def\SO{\ensuremath{\mathrm{SO}}}
\def\GL{\ensuremath{\mathrm{GL}}}
\def\so{\ensuremath{\mathfrak{so}}}
\def\g{\ensuremath{\mathfrak{g}}}
\def\y{\ensuremath{\mathfrak{y}}}
\def\p{\ensuremath{\mathfrak{p}}}
\def\q{\ensuremath{\mathfrak{q}}}
\def\gl{\ensuremath{\mathfrak{gl}}}
\def\sl{\ensuremath{\mathfrak{sl}}}
\newcommand{\bg}{\mbox{\boldmath{$ g$}}}
\DeclareMathOperator{\Hol}{Hol}
\def\mb{\mathbf}
\def\mr{\mathrm}
\def\mc{\mathcal}
\def\calG{\ensuremath{\mathcal{G}}}
\def\f{f^\circ}
\def\wtf{\wt{f}^\circ}
\def\t{\otimes}
\def\pmat#1{\begin{pmatrix}#1\end{pmatrix}}
\def\smat#1{\left(\begin{smallmatrix}#1\end{smallmatrix}\right)}
 \newcommand{\ind}{\indices}
\begin{document}

\allowdisplaybreaks

\newcommand{\arXivNumber}{1510.03337}

\renewcommand{\PaperNumber}{081}

\FirstPageHeading

\ArticleName{A Projective-to-Conformal Fef\/ferman-Type\\ Construction}

\ShortArticleName{A Projective-to-Conformal Fef\/ferman-Type Construction}

\Author{Matthias HAMMERL~$^{\dag^1}$, Katja SAGERSCHNIG~$^{\dag^2}$, Josef \v{S}ILHAN~$^{\dag^3}$,\\ Arman TAGHAVI-CHABERT~$^{\dag^4}$ and Vojt\v{e}ch \v{Z}\'ADN\'IK~$^{\dag^5}$}

\AuthorNameForHeading{M.~Hammerl, K.~Sagerschnig, J.~\v{S}ilhan, A.~Taghavi-Chabert and V.~\v{Z}\'adn\'ik}

\Address{$^{\dag^1}$~University of Vienna, Faculty of Mathematics, \\
\hphantom{$^{\dag^1}$}~Oskar-Morgenstern-Platz 1, 1010 Vienna, Austria}
\EmailDD{\href{mailto:matthias.hammerl@univie.ac.at}{matthias.hammerl@univie.ac.at}}
\Address{$^{\dag^2}$~INdAM-Politecnico di Torino, Dipartimento di Scienze Matematiche,\\
\hphantom{$^{\dag^2}$}~Corso Duca degli Abruzzi 24, 10129 Torino, Italy}
\EmailDD{\href{mailto:katja.sagerschnig@univie.ac.at}{katja.sagerschnig@univie.ac.at}}
\Address{$^{\dag^3}$~Masaryk University, Faculty of Science, Kotl\'{a}\v{r}sk\'{a} 2, 61137 Brno, Czech Republic}
\EmailDD{\href{mailto:silhan@math.muni.cz}{silhan@math.muni.cz}}
\Address{$^{\dag^4}$~Universit\`{a} di Torino, Dipartimento di Matematica ``G.~Peano'',\\
\hphantom{$^{\dag^4}$}~Via Carlo Alberto 10, 10123 Torino, Italy}
\EmailDD{\href{mailto:ataghavi@unito.it}{ataghavi@unito.it}}
\Address{$^{\dag^5}$~Masaryk University, Faculty of Education, Po\v{r}\'\i\v{c}\'\i\ 31, 60300 Brno, Czech Republic}
\EmailDD{\href{mailto:zadnik@mail.muni.cz}{zadnik@mail.muni.cz}}

\ArticleDates{Received February 09, 2017, in f\/inal form October 09, 2017; Published online October 21, 2017}

\Abstract{We study a Fef\/ferman-type construction based on the inclusion of Lie groups ${\rm SL}(n+1)$ into ${\rm Spin}(n+1,n+1)$. The construction associates a split-signature $(n,n)$-conformal spin structure to a projective structure of dimension $n$. We prove the existence of a canonical pure twistor spinor and a light-like conformal Killing f\/ield on the constructed conformal space. We obtain a complete characterisation of the constructed conformal spaces in terms of these solutions to overdetermined equations and an integrability condition on the Weyl curvature. The Fef\/ferman-type construction presented here can be understood as an alternative approach to study a conformal version of classical Patterson--Walker metrics as discussed in recent works by Dunajski--Tod and by the authors. The present work therefore gives a complete exposition of conformal Patterson--Walker metrics from the viewpoint of parabolic geometry.}

\Keywords{parabolic geometry; projective structure; conformal structure; Cartan connection; Fef\/ferman spaces; twistor spinors}

\Classification{53A20; 53A30; 53B30; 53C07}

\section{Introduction}
In \emph{conformal geometry} the geometric structure is given by an equivalence class of pseudo-Riemannian metrics: two metrics $g$ and $\hat{g}$ are considered to be equivalent if they dif\/fer by a~positive smooth rescaling, $\hat{g}=e^{2f}g$. In \emph{projective geometry} the geometric structure is given by an equivalence class of torsion-free af\/f\/ine connections: two connections $D$ and $\hat{D}$ are considered as equivalent if they share the same geodesics (as unparametrised curves). While conformal and projective structures both determine a corresponding class of af\/f\/ine connections, neither of them induces a single distinguished connection on the tangent bundle. Instead, both structures have canonically associated \emph{Cartan connections} that govern the respective geometries and encode prolonged geometric data of the respective structures. It is therefore often useful when studying projective and conformal structures to work in the framework of \emph{Cartan geometries}.

The present paper investigates a geometric construction that produces a conformal class of split-signature metrics on a $2n$-dimensional manifold arising naturally from a projective class of connections on an $n$-dimensional manifold. Split-signature conformal structures of this type have appeared in several places in the literature before. The projective-to-conformal construction studied in this paper should be understood as a generalisation of the classical \emph{Riemann extensions} of af\/f\/ine spaces by E.M.~Patterson and A.G.~Walker \cite{patterson-walker}. One of the main authors motivations for the present study was the article \cite{dunajski-tod} by M.~Dunajski and P.~Tod, where the Patterson--Walker construction was generalised to a projectively invariant setting in dimension $n=2$. On the other hand, in \cite{nurowski-sparling-cr} conformal structures of signature $(2,2)$ were constructed using Cartan connections that contain the conformal structures arising from $2$-dimensional projective structures as a special case. A~generalisation of this Cartan-geometric approach to higher dimensions can be found in \cite{nurowski-projective-metric}.

In this paper the construction is studied as an instance of a \emph{Fefferman-type construction}, as formalised in \cite{cap-constructions,cap-slovak-book}, based on an inclusion of the respective Cartan structure groups $\SL(n+1)\embed\Spin(n+1,n+1)$. We show that in the general situation $n\geq 3$ the induced conformal Cartan geometry is \emph{non-normal}. To obtain information on the conformal structure it is thus important to understand how the normal conformal Cartan connection dif\/fers from the induced one, and the main part of the paper concerns the study of this modif\/ication. We may summarise the main contributions of the paper as follows:
\begin{itemize}\itemsep=0pt
\item A comprehensive treatment of the projective-to-conformal Fef\/ferman-type construction including a discussion of the intermediate Lagrangean contact structure (Section~\ref{feff-const}) and a~comparison with Patterson--Walker metrics (Section~\ref{comparison}).
\item A thorough study of the normalisation process (Section \ref{nor&char}) and an explicit formula for the modif\/ication needed to obtain the normal conformal Cartan connection (Section~\ref{explicit}).
\item The characterisation of the conformal structures obtained via our Fef\/ferman-type construction (culminating in Theorem~\ref{thm-char-trac}).
\end{itemize}

Let us comment upon the characterisation in more detail. This is formulated in terms of a~conformal Killing f\/ield $k$ and a twistor spinor~$\chi$ on the conformal space together with a~(conformally invariant) integrability curvature condition. In Theorem~\ref{thm-char-trac} the properties of~$k$ and~$\chi$ are specif\/ied in terms of corresponding conformal tractors, which nicely ref\/lects the algebraic setup of the Fef\/ferman-type construction in geometric terms.

An alternative equivalent characterisation theorem was obtained by the authors in \cite[Theo\-rem~1]{hsstz-walker} by dif\/ferent means, namely, by direct computations based on spin calculus in the spirit of \cite{penrose-rindler-86, Taghavi-Chabert2012a}. The conformal properties are given purely in underlying terms and do not refer to tractors. In Section~\ref{alternative} (Theorem \ref{char-refin}) we indicate how this alternative characterisation can be obtained in the current framework.

We remark that, to our knowledge, the present work is the f\/irst comprehensive treatment of a non-normal Fef\/ferman-type construction and we expect that the techniques developed should have considerable scope for applications to other similar constructions. A particularly interes\-ting case of this sort is the Fef\/ferman construction for (non-integrable) almost CR-structures. Possible further applications concern relations between solutions of so-called \emph{BGG-equations} and special properties of the induced conformal structures. Several such relationships were already obtained by the authors in~\cite{hsstz-walker}. For instance, we can give a full description of Einstein metrics contained in the resulting conformal class in terms of the initial projective structure. Moreover, in \cite{hsstz-ambient} we were able to show that the obstruction tensor of the induced conformal structure vanishes.

\section{Projective and conformal parabolic geometries}\label{sec-background}

The standard reference for the background material on Cartan and parabolic geometries presented here is \cite{cap-slovak-book}.

\subsection{Cartan and parabolic geometries}\label{pargeo}
Let $G$ be a Lie group with Lie algebra $\g$ and $P\subseteq G$ a closed subgroup with Lie algebra $\p$. A \emph{Cartan geometry} $(\mathcal{G},\omega)$ of type $(G,P)$ over a smooth manifold $M$ consists of a $P$-principal bundle $\mathcal{G}\to M$ together with a \emph{Cartan connection} $\omega\in\Omega^1(\mathcal{G},\g)$. The canonical principal bundle $G\to G/P$ endowed with the Maurer--Cartan form constitutes the \emph{homogeneous model} for Cartan geometries of type $(G,P)$.

The \emph{curvature} of a Cartan connection $\omega$ is the $2$-form
\begin{gather*}
K\in\Omega^2(\mathcal{G},\g), \qquad K(\xi,\eta):=d\omega(\xi,\eta)+[\omega(\xi),\omega(\eta)],
\qquad \text{for all $\xi,\eta\in\mathfrak{X}(\mathcal{G})$},
\end{gather*}
which is equivalently encoded in the $P$-equivariant \emph{curvature function}
\begin{gather} \label{curvatureK}
\kappa\colon \ \mathcal{G}\to\Lambda^2(\g/\p)^*\otimes \g, \qquad \kappa(u)(X+\p,Y+\p):=K\big(\omega^{-1}(u)(X),\omega^{-1}(u)(Y)\big).
\end{gather}
The curvature is a complete obstruction to a local equivalence with the homogeneous model. If the image of $\kappa$ is contained in $\Lambda^2(\g/\p)^*\otimes \p$ the Cartan geometry is called \emph{torsion-free}.

A \emph{parabolic geometry} is a Cartan geometry of type $(G,P)$, where~$G$ is a~semi-simple Lie group and $P\subseteq G$ is a parabolic subgroup. A~subalgebra $\p\subseteq \g$ is parabolic if and only if its maximal nilpotent ideal, called nilradical $\p_{+}$, coincides with the orthogonal complement $\p^{\perp}$ of $\p\subseteq\g$ with respect to the Killing form. In particular, this yields an isomorphism $(\g/\p)^*\cong\p_+$ of $P$-modules. The quotient $\g_0=\p/\p_+$ is called the Levi factor; it is reductive and decomposes into a semi-simple part $\g_0^{ss}=[\g_0,\g_0]$ and the center $\mathfrak{z}(\g_0)$. The respective Lie groups are $G_0^{ss}\subseteq G_0\subseteq P$ and $P_+\subseteq P$ so that $P=G_0\ltimes P_+$ and $P_{+}=\exp(\p_{+})$. An identif\/ication of $\g_0$ with a subalgebra in $\p$ yields a grading $\g=\g_{-k}\oplus\cdots \oplus\g_{-1}\oplus \g_0\oplus \g_{1}\oplus\cdots\oplus\g_{k}$, where $\p_+=\g_{1}\oplus\cdots\oplus\g_{k}$. We set $\g_{-}=\g_{-k}\oplus\cdots \oplus\g_{-1}$. If $k$ is the depth of the grading the parabolic geometry is called \emph{$|k|$-graded}.

The grading of $\g$ induces a grading on $\La^2\p_+\otimes\g\cong\La^2(\g/\p)^*\otimes\g$. A parabolic geometry is called \emph{regular} if the curvature function $\ka$ takes values only in the components of positive homogeneity. In particular, any torsion-free or $|1|$-graded parabolic geometry is regular.

Given a $\mathfrak{g}$-module $V$, there is a natural $\p$-equivariant map, the \emph{Kostant co-differential},
\begin{gather}\label{kostant}
 \partial^*\colon \ \Lambda^k(\g/\p)^*\otimes V \to\Lambda^{k-1}(\g/\p)^*\otimes V,
\end{gather}
def\/ining the Lie algebra homology of $\mathfrak{p}_+$ with values in $V$; see, e.g., \cite[Section~3.3.1]{cap-slovak-book} for the explicit form.
For $V=\g$, this gives rise to a natural normalisation condition: parabolic geometries satisfying $\partial^*(\kappa)=0$ are called \emph{normal}.
The \emph{harmonic curvature} $\kappa_{H}$ of a normal parabolic geometry is the image of $\kappa$ under the projection $\ker \partial^*\to\ker \partial^*/\operatorname{im} \partial^*$. For regular and normal parabolic geometries, the entire curvature $\kappa$ is completely determined just by $\kappa_{H}$.

A \emph{Weyl structure} $j\colon \G_0{\embed}\G$ of a parabolic geometry $(\G,\om)$ over $M$ is a reduction of the $P$-principal bundle $\G\to M$ to the Levi subgroup $G_0\subseteq P$. The class of all Weyl structures, which are parametrised by one-forms on~$M$, includes a particularly important subclass of \emph{exact Weyl structures}, which are parametrised by functions on~$M$: For $|1|$-graded parabolic geometries, these correspond to further reductions of $\G_0\to M$ just to the semi-simple part~$G_0^{ss}$ of~$G_0$ or, equivalently, to sections of the principal $\rr_+$-bundle $\G_0/G_0^{ss}\to M$. The latter bundle is called the \emph{bundle of scales} and its sections are the \emph{scales}.

For a Weyl structure $j\colon \G_0\embed\G$, the pullback $j^*\om= j^*\om_{-}+j^*\om_{0}+j^*\om_{+}$ of the Cartan connection may be decomposed according to $\g=\g_-\oplus\g_0\oplus\p_+$. The $\g_0$-part $j^*\om_{0}$ is a principal connection on the $G_0$-bundle $\G_0\to M$; it induces connections on all associated bundles, which are called \emph{$($exact$)$ Weyl connections}. The $\p_+$-part $j^*\om_{+}$ is the so-called \emph{Schouten tensor}.

\subsection{Tractor bundles and BGG operators}
Every Cartan connection $\omega$ on $\G\to M$ naturally extends to a principal connection $\hat{\omega}$ on the $G$-principal bundle $\hat{\mathcal{G}}:=\mathcal{G}\times_{P}G\to M$, which further induces a linear connection $\nabla^{\mathcal{V}}$ on any associated vector bundle
$\mathcal{V}:=\mathcal{G}\times_{P} V=\hat{\mathcal{G}}\times_{G} V$ for a $G$-representation $V$. Bundles and connections arising in this way are called \emph{tractor bundles} and \emph{tractor connections}. The tractor connections induced by normal Cartan connections are called normal tractor connections.

In particular, for the adjoint representation we obtain the \emph{adjoint tractor bundle} $\mathcal{A}M := \mathcal{G}\times_{P} \mathfrak{g}$. The canonical projection $\g\to\g/\p$ and the identif\/ication $TM\cong\G\x_P(\g/\p)$ yield a~bundle projection $\Pi\colon \mc{A}M\to TM$; the inclusion $\p_+\subseteq \g$ and the identif\/ication $\p_+\cong(\g/\p)^*$ yield a bundle inclusion $T^*M\embed\mc{A}M$. This allows us to interpret the Cartan curvature $\kappa$ from~\eqref{curvatureK} as a 2-form $\Om$ on $M$ with values in~$\mc{A}M$.

The holonomy group of the principal connection $\hat\om$ is by def\/inition the \emph{holonomy of the Cartan connection}~$\om$, i.e., $\Hol(\om):=\Hol(\hat\om)\subseteq G$. By the holonomy of a geometric structure we mean the holonomy of the corresponding normal Cartan connection.

In \cite{BGG-2001}, and later in a simplif\/ied manner in \cite{BGG-Calderbank-Diemer}, it was shown that for a tractor bundle $\mc{V}=\G\x_P V$ one can associate a sequence of dif\/ferential operators, which are intrinsic to the given parabolic geometry~$(\G,\om)$,
\begin{gather*}
 \Ga(\HH_0)\overset{{\Th_0^{\mc{V}}}}{\goesto}\Ga(\HH_1)\overset{{\Th_1^{\mc{V}}}}{\goesto}\cdots\overset{{\Th_{n-1}^{\mc{V}}}}{\goesto} \Ga(\HH_n).
\end{gather*}
The operators ${\Th_k^{\mc{V}}}$ are the \emph{BGG-operators} and they operate between the sections of subquotients $\HH_k=\ker\pa^*/\im\pa^*$ of the bundles of $\mc{V}$-valued $k$-forms, where $\pa^*\colon \La^k T^*M\otimes\mc{V}\to\La^{k-1} T^*M\otimes\mc{V}$ denotes the bundle map induced by the Kostant co-dif\/ferential \eqref{kostant}.

The f\/irst BGG-operator ${\Th_0^{\mc{V}}}\colon \Ga(\HH_0)\goesto\Ga(\HH_1)$ is constructed as follows. The bundle $\HH_0$ is simply the quotient $\mc{V}/\mc{V}'$, where $\mc{V}'\subseteq\mc{V}$ is the subbundle corresponding to the largest $P$-invariant f\/iltration component in the $G$-representation~$V$. It turns out, there is a distinguished dif\/ferential operator that splits the projection $\Pi_0\colon \mc{V}\goesto\HH_0$, namely, the \emph{splitting operator}, which is the unique map $L_0^{\mc{V}}\colon \Ga(\mc{H}_0) \goesto \Ga(\mc{V})$ satisfying
\begin{gather*}
\Pi_0(L_0^{\mc{V}}(\si))=\si,\qquad \pa^* (d^{\na^{\mc{V}}} L_0^{\mc{V}}(\si)) =0, \qquad \text{for all $\si \in \Ga(\mc{H}_0)$}.
\end{gather*}
The latter condition allows to def\/ine the f\/irst BGG-operator by ${\Th_0^{\mc{V}}} := \Pi_1 \circ d^{\na^{\mc{V}}} \circ L_0^{\mc{V}}$, where $\Pi_1\colon \ker\pa^* \to \Ga(\mc{H}_1)$. The f\/irst BGG-operator def\/ines an overdetermined system of dif\/ferential equations on $\si\in\Ga(\HH_0)$, ${\Th_0^{\mc{V}}}(\si)=0$, which is termed the \emph{first BGG-equation}.

\subsection{Further notations and conventions}
In order to distinguish various objects related to projective and conformal structures, the symbols referring to conformal data will always be endowed with tildes. To write down explicit formulae, we employ abstract index notation, cf., e.g., \cite{penrose-rindler-84}. Furthermore, we will use dif\/ferent types of indices for projective and conformal manifolds. E.g., on a projective manifold $M$ we write $\mbb{E}_A:=T^*M$, $\mbb{E}^A:=TM$, and multiple indices denote tensor products, as in $\mbb{E}\ind{_A^B}:=T^*M\t TM$. Indices between squared brackets are skew, as in $\mbb{E}_{[AB]}:=\La^2 T^*M$, and indices between round brackets are symmetric, as in $\mbb{E}^{(AB)}:=S^2 TM$. Analogously, on a conformal manifold $\wt{M}$ we write $\wt{\mbb{E}}_a:=T^*\wt{M}$, $\wt{\mbb{E}}^a:=T\wt{M}$ etc. By $\mbb{E}(w)$ and $\wt{\mbb{E}}[w]$ we denote the density bundle over~$M$ and~$\wt{M}$, respectively. Tensor products with other natural bundles are denoted as $\mbb{E}_A(w):=\mbb{E}_A\t\mbb{E}(w)$, $\wt{\mbb{E}}_{[ab]}[w]:=\wt{\mbb{E}}_{[ab]}\t\wt{\mbb{E}}[w]$, and the like.

\subsection{Projective structures}\label{section-projective}
Let $M$ be a smooth manifold of dimension $n\geq 2$. A \emph{projective structure} on $M$ is given by a~class, $\mb{p}$, of torsion-free projectively equivalent af\/f\/ine connections: two connections~$D$ and~$\hat D$ are projectively equivalent if they have the same geodesics as unparametrised curves. This is the case if and only if there is a one-form $\Ups_A\in\Ga(\mbb{E}_A)$ such that, for all $\xi^A\in\Ga\big(\mbb{E}^A\big)$,
\begin{gather*}
 \hat D_A\xi^B=D_A\xi^B+\Ups_A\xi^B+\Ups_P\xi^P \de\ind{_A^B}.
\end{gather*}

An \emph{oriented projective structure} $(M,\mb{p})$, which is a projective structure $\mb{p}$ on an oriented manifold $M$, is equivalently encoded as a normal parabolic geometry of type~$(G,P)$, where $G=\SL(n+1)$ and $P=\GL_+(n)\ltimes{\rr^n}^*$ is the stabiliser of a ray in the standard representa\-tion~$\mathbb{R}^{n+1}$.

Af\/f\/ine connections from the projective class $\mb{p}$ are precisely the Weyl connections of the corresponding parabolic geometry.
Exact Weyl connections are those $D\in\mb{p}$ which preserve a~volume form~--- these are also known as \emph{special} af\/f\/ine connections.
In particular, a choice of $D\in\mb{p}$ reduces the structure group to $G_0=\GL_+(n)$, if $D$ is special, the structure group is further reduced to $G_0^{ss}=\SL(n)$.

For later purposes we now give explicit expressions of the main curvature quantities, cf., e.g., \cite{thomass, eastwood-notes}. For $D\in\mb{p}$, the Schouten tensor is determined by the Ricci curvature of $D$; if $D$ is special, then the Schouten tensor is $\Rho_{AB}=\frac{1}{n-1}R\ind{_{PA}^P_B}$, in particular, it is symmetric. The projective Weyl curvature and the Cotton tensor are
\begin{gather*}	
 W\ind{_{AB}^C_D} =R\ind{_{AB}^C_D} +\Rho_{AD}\de\ind{^C_{B}}- \Rho_{BD}\de\ind{^C_{A}}, \qquad Y_{CAB}=2D_{[A}\Rho_{B]C}.
\end{gather*}

Henceforth, we use a suitable normalisation of densities so that the line bundle associated to the canonical one-dimensional representation of~$P$ has projective weight $-1$. Hence, comparing with the usual notation, the \emph{density bundle of projective weight $w$}, denoted by $\mbb{E}(w)$, is just the bundle of ordinary $\big(\frac{-w}{n+1}\big)$-densities. As an associated bundle to $\G\to M$, $\mbb{E}(w)$ corresponds to the $1$-dimensional representation of $P$ given by
\begin{gather}\label{projectivedens}
 \GL_+(n)\ltimes{\rr^n}^* \goesto \rr_+, \qquad (A,X)\mapsto \det(A)^{w}.
\end{gather}

The \emph{projective standard tractor bundle} is the tractor bundle associated to the standard representation of $G=\SL(n+1)$. The \emph{projective dual standard tractor bundle} is denoted by $\mc{T}^*$, i.e., $\mc{T}^*:=\G\times_P{\rr^{n+1}}^*$. With respect to a choice of $D\in\mb{p}$, we write
\begin{gather*}
\mc{T}^*=
\begin{pmatrix}
\mbb{E}_A(1) \\
\mbb{E}(1)
\end{pmatrix},\qquad
\na^{\mc{T}^*}_C
\begin{pmatrix}
\ph_A \\
\si
\end{pmatrix}
=
\begin{pmatrix}
D_C\ph_A +\Rho_{CA}\si \\
D_C\si-\ph_C
\end{pmatrix}.
\end{gather*}

\subsection{Conformal spin structures and tractor formulas}\label{section-conformalspin}
Let $\wt{M}$ be a smooth manifold of dimension $2n\ge 4$. A \emph{conformal structure} of signature $(n,n)$ on~$\wt{M}$ is given by a class, $\mb{c}$, of conformally equivalent pseudo-Riemannian metrics of signature~$(n,n)$: two metrics~$g$ and~$\hat{g}$ are conformally equivalent if
$\hat{g}=f^2 g$ for a nowhere-vanishing smooth function~$f$ on~$\wt{M}$. It may be equivalently described as a reduction of the frame bundle of~$\wt{M}$ to the structure group $\CO(n,n)=\rr_+\times \SO(n,n)$. An \emph{oriented conformal structure} of signature $(n,n)$ is a conformal structure of signature $(n,n)$ together with f\/ixed orientations both in time-like and space-like directions, equivalently, a reduction of the frame bundle to the group $\CO_{\rm o}(n,n)=\rr_+\times \SO_{\rm o}(n,n)$, the connected component of the identity. An equivariant lift of such a reduction with respect to the 2-fold covering $\CSpin(n,n) =\rr_+\times \Spin(n,n) \rightarrow \CO_{\rm o}(n,n)$ is referred to as a \emph{conformal spin structure} $\big(\wt{M},\mb{c}\big)$ of signature $(n,n)$.

A conformal spin structure of signature $(n,n)$ is equivalently encoded as a normal parabolic geometry of type $\big(\wt{G},\wt{P}\big)$, where $\wt{G}=\Spin(n+1,n+1)$ and $\wt{P}=\CSpin(n,n)\ltimes{\rr^{n,n}}^*$ is the stabiliser of an isotropic ray in the standard representation~$\mathbb{R}^{n+1,n+1}$.

A general Weyl connection is a torsion-free af\/f\/ine connection $\wt{D}$ such that $\wt{D} g\in\mb{c}$ for any $g\in\mb{c}$. If $\wt{D}g=0$, i.e., $\wt{D}$ is the Levi-Civita connection of a metric $g\in\mb{c}$, it is an exact Weyl connection. A choice of Weyl connection reduces the structure group to $\wt{G}_0=\CSpin(n,n)$. If the Weyl connection is exact the structure group is further reduced to $\wt{G}^{ss}_0=\Spin(n,n)$.

Now we brief\/ly introduce the main curvature quantities of conformal structures, cf., e.g.,~\cite{eastwood-notes-conformal}. For $g\in \mb{c}$, the \emph{Schouten\ tensor},
\begin{gather*}
 \wt{\Rho}=\wt{\Rho}(g)=\frac{1}{2n-2}\left(\wt{\mr{Ric}}(g)-\frac{\wt{\mr{Sc}}(g)}{2(2n-1)}g\right),
\end{gather*}
is a trace modif\/ication of the Ricci curvature $\wt{\mr{Ric}}(g)$ by a multiple of the scalar curvature $\wt{\mr{Sc}}(g)$; its trace is denoted $\wt{J}=g^{pq}\wt{\Rho}_{pq}$. The conformal Weyl curvature and the Cotton tensors are
\begin{gather*}
 \wt{W}_{ab\; d}^{\;\;\; c}=\wt{R}_{ab\; d}^{\;\;\; c}-2\de_{[a}^c\wt{\Rho}_{b]d}+2g_{d[a}\wt{\Rho}_{b]}^{\; c}, \qquad
 \wt{Y}_{cab} = 2\wt{D}_{[a} \wt{\Rho}_{b]c}.
\end{gather*}

As for projective structures, we will employ a suitable parametrisation of densities so that the canonical 1-dimensional representation of $\wt{P}$ has conformal weight $-1$. Hence, the \emph{density bundle of conformal weight $w$}, denoted as $\wt{\mbb{E}}[w]$, is just the bundle of ordinary $\big(\frac{-w}{2n}\big)$-densities. As an associated bundle to the Cartan bundle $\wt{\G}\to\wt{M}$, it corresponds to the $1$-dimensional representation of $\wt{P}$ given by
\begin{gather}\label{conformaldens}
 (\rr_+\times \Spin(n,n))\ltimes{\rr^{2n}}^* \to \rr_+, \qquad (a,A,Z)\mapsto a^{-w}.
\end{gather}
In particular, the conformal structure may be seen as a section of $\wt{\mbb{E}}_{(ab)}[2]$, which is called the \emph{conformal metric} and denoted by $\bg_{ab}$.

The spin bundles corresponding to the irreducible spin representations of $\Spin(n,n)$ are denoted by $\wt{\Sigma}_+$ and $\wt{\Sigma}_-$, and $\wt{\Sigma}=\wt{\Sigma}_+\oplus\wt{\Sigma}_-$. We employ the weighted conformal gamma matrix $\ga\in \Ga\big(\wt{\mbb{E}}_a\t\big(\End\wt{\Sigma}\big)[1]\big)$ such that $\ga_p\ga_q+\ga_q\ga_p=-2\bg_{pq}$. For $\xi\in\X\big(\wt{M}\big)$ and $\chi\in\Ga\big(\wt{\Sigma}\big)$, the Clif\/ford multiplication of $\xi$ on $\chi$ is then written as $\xi\cdot\chi=\xi^p \ga_p\chi$.

The \emph{conformal standard tractor bundle} is the associated bundle $\wt{\mc{T}}:=\wt\G\times_{\wt P}\rr^{n+1,n+1}$ with respect to the standard representation. It carries the canonical tractor metric $\mb{h}$ and the conformal standard tractor connection $\wt{\na}^{\wt{\mc{T}}}$, which preserves $\mb{h}$. With respect to a metric $g\in\mb{c}$, we have
\begin{gather} \label{constd}
 \wt{\mc{T}}=\begin{pmatrix}
 \wt{\mbb{E}}[-1]\\
 \wt{\mbb{E}}_a[1] \\
 \wt{\mbb{E}}[1]
\end{pmatrix}, \quad \mb{h}=
 \begin{pmatrix}
 0 & 0 & 1 \\
 0 & \bg & 0 \\
 1 & 0 & 0
\end{pmatrix}, \qquad \wt{\na}^{\wt{\mc{T}}}_c
 \begin{pmatrix}
 \rh \\
 \ph_a \\
 \si
 \end{pmatrix}
 =
 \begin{pmatrix}
 \wt{D}_c \rh-\wt{\Rho}_{c}^{\; b}\ph_b \\
 \wt{D}_c\ph_a+\si \wt{\Rho}_{ca}+\rh \bg_{c a}\\
 \wt{D}_c \si-\ph_c
 \end{pmatrix}.
\end{gather}
The BGG-splitting operator is given by
\begin{gather}\label{splitStd} L_0^{\wt{\mc{T}}}\colon \ \Ga\big(\wt{\mbb{E}}[1]\big)\goesto \Ga\big(\wt{\mc{T}}\big), \qquad
 \si\mapsto
 \begin{pmatrix}
 \frac{1}{2n}\big({-}\wt{D}^p\wt{D}_p-\wt{J}\big)\si \\
 \wt{D}_a\si \\
 \si
 \end{pmatrix}.
\end{gather}

The \emph{spin tractor bundle} is the associated bundle $\wt{\mc{S}}:=\wt\G\times_{\wt P}\De^{n+1,n+1}$, where $\De^{n+1,n+1}$ is the spin representation of $\wt{G}=\Spin(n+1,n+1)$. Since we work in even signature, it decomposes into irreducibles $\De^{n+1,n+1}=\De^{n+1,n+1}_+\oplus\De^{n+1,n+1}_-$; the corresponding bundles are denoted by $\wt{\mc{S}}_{\pm}=\wt\G\times_{\wt P}\De_{\pm}^{n+1,n+1}$. Under a choice of $g\in\mb{c}$, these decompose as $ \wt{\mc{S}}_{\pm}= \left(\begin{smallmatrix}
 \wt{\Sigma}_{\mp}[-\frac{1}{2}] \\
 \wt{\Sigma}_{\pm}[\frac{1}{2}]
 \end{smallmatrix}\right)$, where~$\wt{\Sigma}_{\pm}$ are the natural spin bundles as before. For later use we record the formulas for the Clif\/ford action of $\wt{\mc{T}}$ on $\wt{\mc{S}}$ and for the spin tractor connections on $\wt{\mc{S}}=\wt{\mc{S}}_{+}\oplus\wt{\mc{S}}_-$,
\begin{gather}\label{traCli}
 \begin{pmatrix} \rh \\ \ph_a \\ \si
 \end{pmatrix} \cdot
 \begin{pmatrix} \tau \\ \chi
 \end{pmatrix}
 =
 \begin{pmatrix} -\ph_a \ga^a \tau+\sqrt{2}\rh \chi \\
\ph_a \ga^a \chi-\sqrt{2}\si \tau
 \end{pmatrix}, \qquad
 \wt{\na}^{\wt{\mc{S}}}_c
 \begin{pmatrix}
 \tau \\
 \chi
 \end{pmatrix}
 =
 \begin{pmatrix}
 \wt{D}_c\tau+\frac{1}{\sqrt{2}}\wt{\Rho}_{cp}\ga^p\chi\\
 \wt{D}_c\chi+\frac{1}{\sqrt{2}}\ga_c\tau
 \end{pmatrix},
\end{gather}
cf.~\cite{mrh-coupling}. The BGG-splitting operator of $\wt{\mc{S}}_{\pm}$ is
\begin{gather}\label{twisplit}
 L_0^{\wt{\mc{S}}_{\pm}}\colon \ \Ga\big(\wt{\Sigma}_{\pm}\big[\tfrac{1}{2}\big]\big) \goesto \Ga\big(\wt{\mc{S}}_{\pm}\big), \qquad
 \chi\mapsto
 \begin{pmatrix}
 \tfrac{1}{\sqrt{2}n}\crd\chi \\
 \chi
 \end{pmatrix},
\end{gather}
where
$\crd\colon \Ga\big(\wt{\Sigma}_{\pm}\big)\goesto \Ga\big(\wt{\Sigma}_{\mp}\big)$, $\crd:=\ga^p\wt{D}_p$, is the \emph{Dirac} operator. The f\/irst BGG-operator associated to $\wt{\mc{S}}_{\pm}$ is the \emph{twistor operator}
\begin{gather*}
\Th^{\wt{\mc{S}}}_0 \colon \ \Ga\big(\wt{\Sigma}_{\pm}\big[\tfrac{1}{2}\big]\big)\goesto \Ga\big(\wt{\mbb{E}}_a\t \wt{\Sigma}_{\pm}\big[\tfrac{1}{2}\big]\big),\qquad \chi \mapsto\wt{D}_a\chi+\tfrac{1}{2n}\gamma_a\crd\chi,
\end{gather*}
cf., e.g.,~\cite{baum-friedrich-twistors}. Elements in the kernel of $\Th^{\wt{\mc{S}}}_0$ are called \emph{twistor spinors}. It is well known that ${\Pi}^{\wt{\mc{S}}}_0$ induces an isomorphism between $\wt{\na}^{\wt{\mc{S}}}$-parallel sections of $\wt{\mc{S}}$ with $\ker \Th^{\wt{\mc{S}}}_0$.

The \emph{adjoint tractor bundle} is the associated bundle $\mc{A}\wt{M}:=\wt{\mc{\G}}\times_{\wt P}\ti\g$ with respect to the adjoint representation of $\wt{G}$ on $\ti\g=\so(n+1,n+1)\cong\La^2 \rr^{n+1,n+1}$. The standard pairing on $\mc{A}\wt{M}$ induced by the Killing form on $\ti\g$ is denoted as $\langle \cdot,\cdot\rangle\colon \mc{A}\wt{M}\x\mc{A}\wt{M}\to\rr$. Henceforth we identify~$\mc{A}\wt{M}$ with~$\La^2\wt{\mc{T}}$. With respect to a~metric~$g\in \mb{c}$,
\begin{gather*}
\mc{A}\wt{M}=
 \begin{pmatrix}
 \wt{\mbb{E}}_{a}[0] \\
 \wt{\mbb{E}}_{[a_0a_{1}]}[2]\; \big|\; \wt{\mbb{E}}[1] \vspace{1mm}\\
 \wt{\mbb{E}}_{a}[2]
 \end{pmatrix}.
\end{gather*}
The standard representation of $\wt{\mathfrak{g}}$ on $\mbb{R}^{n+1,n+1}$ gives rise to the map
\begin{gather}\label{adjact}
\bullet\colon \ \mc{A}\wt{M} \otimes \wt{\mc{T}} \to \wt{\mc{T}}, \qquad
\begin{pmatrix} \rh_{a} \\ \mu_{a_0 a_1}\; \big |\; \ph \\ \be_{a} \end{pmatrix} \bullet
\begin{pmatrix} \nu \\ \om_b \\ \si \end{pmatrix} =
\begin{pmatrix} \rh^r\om_r - \ph\nu\\ \mu_b{}^r\om_r - \si\rh_b - \nu\be_b \\ \be^r\om_r + \ph\si \end{pmatrix}.
\end{gather}
The normal tractor connection is given by
\begin{gather} \label{conadj}
 \wt{\na}^{\mc{A}\wt{M}}_c
 \begin{pmatrix}
 \rh_{a} \\
 \mu_{a_0 a_1}\; \big|\; \ph \\
 k_{a}
 \end{pmatrix}=
\begin{pmatrix}
 \wt{D}_c\rh_{a} -\wt{\Rho}_{c}^{\; p}\mu_{pa}-\wt{\Rho}_{ca}\ph \\
 \begin{pmatrix}\wt{D}_c\mu_{a_0 a_1} +2\bg_{c[a_0}\rh_{a_1]} \\
 +2\wt{\Rho}_{c[a_0} k_{a_1]}\end{pmatrix}
 \; \big| \; \big(
 \wt{D}_c\ph -\wt{\Rho}_{c}^{\; p} k_{p}
 +\rh_{c}\big)
 \\
 \wt{D}_c k_{a}
 - \mu_{ca}+\bg_{ca}\ph
 \end{pmatrix}.
\end{gather}
Written as a two-form $\wt{\Om}$ with values in $\La^2\wt{\mc{T}}$, the curvature of~$\wt{\na}^{\wt{\mc{T}}}$ is
\begin{gather} \label{curvature}
\wt{\Om}_{c_0c_1}=
\begin{pmatrix}
-\wt{Y}_{ac_0c_1} \\ \wt{W}_{c_0c_1a_0a_1}\;\big|\; 0 \\ 0
\end{pmatrix}
\in \Ga\big(\wt{\mbb{E}}_{[c_0c_1]} \otimes \mc{A}\wt{M}\big).
\end{gather}
The BGG-splitting operator
\begin{gather*}
 L_0^{\mc{A}\wt{M}}\colon \ \Ga\big(\wt{\mbb{E}}^a\big)=\Ga\big(\wt{\mbb{E}}_a[2]\big)\goesto \Ga\big(\mc{A}\wt{M}\big),\qquad
 k_a\mapsto
 \begin{pmatrix}
 \rh_{a} \\
 \mu_{a_0 a_1} \; \big|\; \ph \\
 k_{a}
 \end{pmatrix},
\end{gather*}
is determined by
 \begin{gather}\label{L0La2}
 \mu_{a_0 a_1}=\wt{D}_{[a_0} k_{a_1]},\qquad \ph=-\frac{1}{2n}\bg^{pq}\wt{D}_p k_{q},\\ \notag
 \rho_a= -\frac{1}{4n}\wt{D}^p\wt{D}_p k_{a} +\frac{1}{4n}\wt{D}^p\wt{D}_{a} k_{p} +\frac{1}{4n^2}\wt{D}_{a}\wt{D}^p k_{p} +\frac{1}{n}\wt{\Rho}^p_{\; a} k_{p} -\frac{1}{2n}\wt{J} k_{a},
\end{gather}
and the corresponding f\/irst BGG-operator of $\mc{A}\wt{M}$ is computed as
\begin{gather*}
 \Th^{\mc{A}\wt{M}}_0\colon \ \Ga\big(\wt{\mbb{E}}_a[2]\big)\goesto \Ga\big(\wt{\mbb{E}}_{(ab)_0}[2]\big),\qquad \xi_a\mapsto\wt{D}_{(c}\xi_{a)_0},
\end{gather*}
where the subscript $0$ denotes the trace-free part. Thus $\Th^{\mc{A}\wt{M}}_0$ is the conformal Killing operator and solutions to the f\/irst BGG-equation are conformal Killing f\/ields. In a prolonged form, the conformal Killing equation is equivalent to
\begin{gather}\label{cKf}
 \wt{\na}^{\mc{A}\wt{M}}_b s= \xi^a\wt{\Om}_{ab},
\end{gather}
where $s= L_0^{\mc{A}\wt{M}}(\xi)$, see \cite{cap-infinitaut, gover-2006}.

\section{The Fef\/ferman-type construction}\label{feff-const}
The construction of split-signature conformal structures from projective structures discussed in this section f\/its into a general scheme relating parabolic geometries of dif\/ferent types. Namely, it is an instance of the so-called Fef\/ferman-type construction, whose name and general procedure is motivated by Fef\/ferman's construction of a canonical conformal structure induced by a CR structure, see~\cite{cap-constructions} and~\cite{cap-slovak-book} for a detailed discussion.

\subsection{General procedure}\label{setup}
Suppose we have two pairs of semi-simple Lie groups and parabolic subgroups, $(G,P)$ and~$\big(\wt{G},\wt{P}\big)$, and a Lie group homomorphism $i\colon G\to\wt{G}$ such that the derivative $i'\colon \g\to\tilde{\g}$ is injective. Assume further that the $G$-orbit of the origin in~$\wt{G}/\wt{P}$ is open and that the parabolic $P\subseteq G$ contains $Q:=i^{-1}\big(\wt{P}\big)$, the preimage of~$\wt{P}\subseteq\wt{G}$.

Given a parabolic geometry $(\mathcal{G}\to M,\omega)$ of type $(G,P)$, one f\/irst forms the \emph{Fefferman space}
\begin{gather}\label{Mtilde}
 \wt{M}:=\mathcal{G}/Q=\mathcal{G}\times_{P}P/Q.
\end{gather}
Then $\big(\mathcal{G}\to\wt{M},\omega\big)$ is automatically a Cartan geometry of type $(G,Q)$. As a next step, one considers the extended bundle $\wt{\mathcal{G}}:=\mathcal{G}\times_{Q}\wt{P}$ with respect to the homomorphism~$Q\to\wt{P}$. This is a principal bundle over $\wt{M}$ with structure group $\wt{P}$ and $j\colon \G\embed\wt{\G}$ denotes the natural inclusion. The equivariant extension of $\omega\in\Om^1(\G,\g)$ yields a unique Cartan connection $\wt{\omega}^{\rm ind}\in\Omega^1\big(\wt{\mathcal{G}},\tilde{\g}\big)$ of type $\big(\wt{G},\wt{P}\big)$ such that $ j^*\wt{\om}^{\rm ind}=i'\circ\om$. Altogether, one obtains a functor from parabolic geometries $(\calG\to M,\omega)$ of type $(G,P)$ to parabolic geometries $\big(\wt{\calG}\to\wt{M},\wt{\omega}^{\rm ind}\big)$ of type $\big(\wt{G},\wt{P}\big)$.

The relation between the corresponding curvatures is as follows: The previous assumptions yield a linear isomorphism $\tilde{\g}/\tilde{\p}\cong\g/\q$ and an obvious projection $\g/\q\to\g/\p$, where $\q\subseteq\p$ is the Lie algebra of $Q\subseteq P$. Composing these two maps one obtains a linear projection $\tilde{\g}/\tilde{\p}\to\g/\p$, whose dual map is denoted as $\ph\colon (\g/\p)^*\to(\tilde{\g}/\tilde{\p})^*$. Since $i'\colon \g\to\tilde{\g}$ is a homomorphism of Lie algebras, the curvature function $\wt{\kappa}^{\rm ind} \colon \wt{\mathcal{G}}\to\Lambda^2(\tilde{\g}/\tilde{\p})^*\otimes\tilde{\g}$ is related to $\kappa\colon \mathcal{G}\to\Lambda^2(\g/\p)^*\otimes\g$ by $\wt{\kappa}^{\rm ind}\circ j = (\La^2\ph\otimes i') \circ\kappa$. We note that $\wt{\kappa}^{\rm ind}$ is fully determined by this formula.

Since $i'$ is an embedding, the notation is in most cases simplif\/ied such that we write $\g\subseteq\tilde{\g}$, $\q=\g\cap\tilde{\p}$, etc.

\subsection{Algebraic setup and the homogeneous model}\label{construction}\label{homogen}
Here we specify the general setup for Fef\/ferman-type constructions from Section~\ref{setup} according to the description of oriented projective and conformal spin structures given in Sections~\ref{section-projective} and~\ref{section-conformalspin}, respectively. Let $\rr^{n+1,n+1}$ be the real vector space $\rr^{2n+2}$ with an inner product,~$h$, of split-signature. Let $\De_+^{n+1,n+1}$ and $\De_-^{n+1,n+1}$ be the irreducible spin representations of
\begin{gather*}
 \wt{G}:=\Spin(n+1,n+1)
\end{gather*}
as in Section~\ref{section-conformalspin}. We f\/ix two pure spinors $s_F\in\De_-^{n+1,n+1}$ and $s_E\in\De_{\pm}^{n+1,n+1}$ with non-trivial pairing, which is assigned for later use to be $ \langle s_E,s_F\rangle=-\frac12$. Note that $s_E$ lies in $\De_+^{n+1,n+1}$ if $n$ is even or in $\De_-^{n+1,n+1}$ if $n$ is odd.

Let us denote by $E,F\subseteq\rr^{n+1,n+1}$ the kernels of $s_E$, $s_F$ with respect to the Clif\/ford multiplication, i.e.,
\begin{gather*}
 E := \big\{ X \in \rr^{n+1,n+1}\colon X \cdot s_E = 0 \big\}, \qquad F := \big\{ X \in \rr^{n+1,n+1} \colon X \cdot s_F = 0 \big\}.
\end{gather*}
The purity of $s_E$ and $s_F$ means that $E$ and $F$ are maximally isotropic subspaces in $\rr^{n+1,n+1}$. The other assumptions guarantee that $E$ and $F$ are complementary and dual each other via the inner product~$h$. Hence we use the decomposition
\begin{gather}\label{decompR}
 \rr^{n+1,n+1}=E\oplus F \cong\rr^{n+1}\oplus{\rr^{n+1}}^*
\end{gather}
to identify the spinor representation $\De^{n+1,n+1}=\De_+^{n+1,n+1}\oplus\De_-^{n+1,n+1}$ with the exterior power algebra $\La^\bullet E \cong \La^\bullet \mbb{R}^{n+1}$, whose irreducible subrepresentations are $\De_-^{n+1,n+1} \cong \La^{\rm even} \mbb{R}^{n+1}$ and $\De_+^{n+1,n+1} \cong \La^{\rm odd} \mbb{R}^{n+1}$. When $n$ is even, respectively, odd, we can identify $\big(\Delta_-^{n+1,n+1}\big)^* \cong \Delta_+^{n+1,n+1}$, respectively $\big(\Delta_-^{n+1,n+1}\big)^* \cong \Delta_-^{n+1,n+1}$.

Now, let us consider the subgroup in $\wt{G}$ def\/ined by
\begin{gather*}
 G:=\{g\in \Spin(n+1,n+1)\colon g\cdot s_E=s_E,\, g\cdot s_F=s_F\}.
\end{gather*}
This subgroup preserves the decomposition \eqref{decompR} so that the restriction of the action to $F$ is dual to the restriction to $E$.
It further preserves the volume form on $E$, respectively $F\cong E^*$, which is determined by $s_E$ and $s_F$ according to the previous identif\/ications. Hence $G\cong\SL(n+1)$ and this def\/ines an embedding $i\colon \SL(n+1)\embed\Spin(n+1,n+1)$.\footnote{Instead of the embedding $\SL(n+1)\embed\Spin(n+1,n+1)$ we could also consider the embedding $\SL(n+1)\embed\SO(n+1,n+1)$. The advantage of employing the embedding into the spin group is two-fold: on the one hand, it is then seen directly that the induced conformal structure has a canonical spin structure, and, on the other hand, we can then use convenient spinorial objects for its characterisation.}

The $G$-invariant decomposition \eqref{decompR} determines a $G$-invariant skew-symmetric involution $K\in\so(n+1,n+1)$ acting by the identity on $E$ and minus the identity on~$F$. The relationship among $K$, $s_E$ and $s_F$ may be expressed as
\begin{gather} \label{eq-K-sE-sF}
 h (X , K(Y)) = -h(K(X),Y) = 2 \langle {s}_E , (X \wedge Y ) \cdot {s}_F \rangle ,
\end{gather}
where
 \begin{gather*}
 (X \wedge Y ) \cdot s_F = \frac{1}{2} ( X \cdot Y \cdot s_F - Y \cdot X \cdot s_F ) = X \cdot Y \cdot s_F + h( X ,Y) s_F .
\end{gather*}
The spin action of $\ti\g$ is denoted by $\bullet$, and thus $A\bullet s=-\frac14 A\cdot s$, for any $A\in\ti\g$ and $s\in\De$. In particular, $K\bullet s_F=-\frac12(n+1)s_F$ and $K\bullet s_E=\frac12(n+1)s_E$. Here we identify $\ti{\g}=\so(n+1,n+1)$ with $\La^2 \mbb{R}^{n+1,n+1}$. It is convenient to split $\ti\g$ in terms of irreducible $\g$-modules as
\begin{gather} \label{tig-decomp}
 \ti{\g}=\Lambda^2(E\oplus F)=\underbrace{(E\otimes F)_0}_{\mathfrak{g=\sl(n+1)}}\oplus \underbrace{(E\otimes F)_{Tr}\oplus \Lambda^2E\oplus\Lambda^2F}_{\mathfrak{g}^{\perp}} ,
\end{gather}
where $(E \otimes F)_{Tr} = \mbb{R} K$, and $K$ acts as $[ K , \phi ] = 2 \phi$, $[ K , \psi ] = -2 \psi$, $[ K , \lambda ] = 0$, for any $\phi \in \La^2 E$, $\psi \in \La^2 F$ and $\lambda \in E \otimes F$. Further, the annihilators of~$s_E$ and~$s_F$ in~$\tilde{\g}$ are the subalgebras $\ker s_E = \sl(n+1) \oplus \La^2 E$ and $\ker s_F = \sl(n+1) \oplus \La^2 F$.

The homogeneous model for conformal spin structures of signature $(n,n)$ is the space of isotropic rays in $\mathbb{R}^{n+1,n+1}$, $\wt{G}/\wt{P}\cong S^{n}\times S^{n}$. The subgroup $G\subseteq\widetilde{G}$ does not act transitively on that space. According to the decomposition \eqref{decompR}, there are three orbits: the set of rays contained in $E$, the set of rays contained in~$F$, and the set of isotropic rays that are neither contained in~$E$ nor in~$F$. Note that only the last orbit is open in~$\wt G/\wt P$, which is one of the requirements from Section~\ref{setup}. Therefore, we def\/ine $\wt P\subseteq \wt G$ to be the stabiliser of a ray through a light-like vector $\tilde{v}\in \rr^{n+1,n+1}\setminus(E\cup F)$. Denoting by $Q=i^{-1}(\wt{P})$ the stabiliser of the ray~$\mathbb{R}_{+}\tilde{v}$ in~$G$, we have the identif\/ication of $G/Q$ with the open orbit of the origin in~$\wt G/\wt P$. The subgroup $Q$, which is not parabolic, is contained in the parabolic subgroup $P\subseteq G$ def\/ined as the stabiliser in~$G$ of the ray through the projection of~$\tilde{v}$ to~$E$. In particular, $G/P$ is the standard projective sphere~$S^n$, the homogeneous model of oriented projective structures of dimension~$n$, and~$G/Q\to G/P$ is the canonical f\/ibration with the standard f\/ibre~$P/Q$, whose total space is the model Fef\/ferman space.

Let us denote by $L=\mathbb{R}\tilde{v}$ the line spanned by the light-like vector~$\tilde{v}$ and let~$L^{\perp}$ be the orthogonal complement in $\rr^{n+1,n+1}$ with respect to~$h$. The tangent space of $G/Q$ at the origin can be seen in three dif\/ferent ways, namely,
\begin{gather*}
 \big(L^{\perp}/L\big)[1] \cong \g/\q \cong \ti{\g}/\ti{\p}.
\end{gather*}
The latter isomorphism is induced by the embedding $\g\subseteq\ti{\g}$, the former one by the standard action of $\g\subseteq\ti{\g}$ on the vector $\tilde{v}\in\rr^{n+1,n+1}$. Both these identif\/ications are $Q$-equivariant.

There are several natural $Q$-invariant objects that in turn yield distinguished geometric objects on the general Fef\/ferman space. The $n$-dimensional $Q$-invariant subspace
\begin{gather*}
 f:=\big(\big(\bar{F}+L\big)/L\big)[1]\subseteq \big(L^\perp/L\big)[1], \qquad \text{where}\quad \bar{F}:=F\cap L^\perp,
\end{gather*}
which is isomorphic to $\p/\q\subseteq\g/\q$, the kernel of the projection $\g/\q\to\g/\p$. Another $n$-dimensional $Q$-invariant subspace is
\begin{gather*}
 e:=\big(\big(\bar{E}+L\big)/L\big)[1]\subseteq \big(L^\perp/L\big)[1], \qquad\text{where}\quad \bar{E}:=E\cap L^\perp.
\end{gather*}
The intersection $e\cap f$ is 1-dimensional with a distinguished $Q$-invariant generator that corresponds to the $G$-invariant involution $K\in\ti{\g}$,
\begin{gather*}
 k:=K+\ti\p \in\ti\g/\ti\p.
\end{gather*}
Note that all these objects are isotropic with respect to the natural conformal class induced by the restriction of~$h$ to~$L^\perp\subseteq\rr^{n+1.n+1}$. In particular, both $e$ and $f$ are maximally isotropic subspaces such that
\begin{gather}\label{flag}
 k \in e\cap f \subseteq k^{\perp} =e + f.
\end{gather}

In Section~\ref{setup} we introduced a map $\ph\colon (\g/\p)^*\to(\tilde{\g}/\tilde{\p})^*$, the dual map to the projection $\ti\g/\ti\p\cong\g/\q\to\g/\p$. The kernel of this projection is just $f$ and the image of $\ph$ is identif\/ied with its annihilator, which will be denoted by~$\f$. Since $f$ is a maximally isotropic subspace in $\ti\g/\ti\p\cong\g/\q$,
\begin{gather*}
\f\cong f[-2].
\end{gather*}
Since $(\ti{\g}/\ti{\p})^*\cong\ti{\p}_+$, we may conclude with the help of explicit matrix realisations from Appendix~\ref{appendixB} that $ \f=\ti{\p}_+\cap\ker s_F$. Moreover, we note that
\begin{gather}\label{eq-tp+p}
\big( \tilde{\p}_+ \cap \ker s_F \big)_{E \otimes F}= \p_+,\qquad \big( \tilde{\p} \cap \ker s_F \big)_{E \otimes F} = \p , \\ \label{useful}
\La^2F\cap\ti\p = \La^2\bar{F} \subseteq \ti{\g}_0,\qquad \big[\ti{\p}_+,\La^2\bar{F}\big]=\f,\qquad \big[\f,\La^2\bar{F}\big]=0.
\end{gather}

\subsection{The Fef\/ferman space and induced structure}\label{sec-feff}
The pairs of Lie groups $(G,P)$ and $\big(\wt{G},\wt{P}\big)$ from the previous subsection satisfy all the properties to launch the Fef\/ferman-type construction.
\begin{Proposition}\label{prop-Feffspace} The Fefferman-type construction for the pairs of Lie groups $(G,P)$ and $\big(\widetilde{G},\widetilde{P}\big)$ yields a natural construction of conformal spin structures $\big(\wt{M},\mb{c}\big)$ of signature $(n,n)$ from $n$-dimensional oriented projective structures $(M,\mb{p})$. The Fefferman space $\wt{M}$ is identified with the total space of the weighted cotangent bundle without the zero section $T^*M(2)\backslash \{0\}$.
\end{Proposition}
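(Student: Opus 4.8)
The plan is to verify the two claims of the Proposition separately. The first claim---that the construction yields a conformal spin structure of signature $(n,n)$---follows almost immediately from the general machinery set up in Subsection \ref{setup} together with the algebraic facts collected in Subsections \ref{construction} and \ref{homogen}. First I would check that the pairs $(G,P)$ and $(\wt G,\wt P)$ satisfy the three hypotheses required to launch the Fefferman-type construction: that $i':\g\to\ti\g$ is injective (immediate, since $i$ is the embedding $\SL(n+1)\embed\Spin(n+1,n+1)$), that the $G$-orbit of the origin in $\wt G/\wt P$ is open (this is precisely the observation in Subsection \ref{homogen} that among the three $G$-orbits of isotropic rays, only the orbit of rays neither contained in $E$ nor $F$ is open, and $\wt P$ was defined as the stabiliser of such a ray $\rr_+\ti v$), and that $P\supseteq Q=i^{-1}(\wt P)$ (established in Subsection \ref{homogen}, where $Q$ is the stabiliser of $\rr_+\ti v$ in $G$ and $P$ the stabiliser of the ray through $v=(\ti v)_E$). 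Granting these, the general procedure produces the Cartan geometry $(\wt\G\to\wt M,\wt\om)$ of type $(\wt G,\wt P)$, and since $(\wt G,\wt P)=(\Spin(n+1,n+1),\CSpin(n,n)\ltimes{\rr^{n,n}}^*)$ is exactly the pair encoding conformal spin structures of signature $(n,n)$ by Subsection \ref{section-conformalspin}, this is the desired induced structure.

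The substantive part is the second claim, the identification $\wt M\cong T^*M(2)\setminus\{0\}$. By \eqref{Mtilde} the Fefferman space is the associated bundle $\wt M=\G\x_P P/Q$, so the task reduces to identifying the homogeneous fibre $P/Q$ as a $P$-space with the fibre of $T^*M(2)\setminus\{0\}$, and then checking that the associated-bundle construction glues these fibres into the weighted cotangent bundle. The plan is to analyse the $P$-action on the model fibre geometrically. Recall that $P$ stabilises the ray through $v=(\ti v)_E\in E$, while $Q\subseteq P$ stabilises the finer datum of the full ray $\rr_+\ti v$ through the isotropic vector $\ti v\in\rr^{n+1,n+1}\setminus(E\cup F)$. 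Writing $\ti v=v+w$ according to the decomposition \eqref{decompR} with $v\in E$ and $w\in F$, the isotropy condition $h(\ti v,\ti v)=0$ together with $E,F$ isotropic forces $2h(v,w)=0$, so in fact $v,w$ are mutually orthogonal; more usefully, fixing the $E$-component (up to scale) leaves $P$ acting on the choice of isotropic complement, and I would show that the orbit of such $\ti v$'s under $P$, modulo the stabiliser $Q$ of $\rr_+\ti v$, is parametrised exactly by the nonzero elements of a one-dimensional-scaled copy of $F\cong E^*$. Concretely, since $G=\SL(n+1)$ preserves the duality between $E$ and $F$ via $h$, and $P$ is the stabiliser of the ray through $v\in E$, the $F$-component $w$ of $\ti v$ transforms as a (weighted) element of the dual space, which on the projective manifold $M$ is precisely a weighted cotangent vector.

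The cleanest route is to use the $Q$-equivariant identification $\g/\q\cong(L^\perp/L)[1]$ from Subsection \ref{homogen} and the distinguished subspaces $e,f\subseteq(L^\perp/L)[1]$ appearing in \eqref{flag}, together with \eqref{weightedf}, \eqref{eq-tp+p} and \eqref{useful}. Since $f\cong\p/\q$ is the kernel of $\g/\q\to\g/\p$ and the annihilator $\f$ satisfies $\f\cong f[-2]=\ti\p_+\cap\ker s_F$, I would track the weight bookkeeping: the factor $[1]$ in $(L^\perp/L)[1]$ together with the identification of $f$ with $\p/\q$ and its annihilator produces exactly the weight $(2)$ appearing in $T^*M(2)$, with the weight arising from the density \eqref{projectivedens} governing how $P_+$ and the scaling part of $G_0=\GL(n)$ act on the $F$-direction. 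Passing from the homogeneous-model statement $P/Q\cong (\rr^n)^*(2)\setminus\{0\}$ to the global bundle statement is then the standard fact that $\G\x_P P/Q$ is the bundle associated to the same $P$-representation as $\G\x_P(\rr^n)^*$, namely $T^*M$, twisted by the density weight. I expect the main obstacle to be precisely this weight-tracking: one must confirm that the $Q$-invariant normalisation $\langle s_E,s_F\rangle=-\tfrac12$, the weight $[1]$ assigned in $(L^\perp/L)[1]$, and the projective density normalisation of Subsection \ref{section-projective} are mutually consistent and combine to give weight exactly $2$ (rather than some other multiple), and that removing the zero section corresponds correctly to the openness of the $G$-orbit, i.e. to $\ti v\notin E\cup F$ translating into $w=(\ti v)_F\neq 0$. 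The matrix realisations in Appendix \ref{matrices} should make this final verification a routine, if careful, computation.
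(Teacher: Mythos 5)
Your proposal is correct and follows essentially the same route as the paper: the first claim is dispatched by checking the general hypotheses of the Fefferman-type construction against the algebraic setup of Subsections \ref{construction} and \ref{homogen}, and the second claim is reduced to the purely algebraic identification of the standard fibre $P/Q$ with $(\g/\p)^*(2)\setminus\{0\}\cong\p_+(2)\setminus\{0\}$ as a $P$-space (transitivity of $P$ plus $Q$ being the stabiliser of a non-zero element), verified in the explicit matrix realisation of Appendix \ref{matrices}. Your geometric parametrisation of the fibre by the $F$-component $w=(\ti v)_F$ of the normalised isotropic vector, constrained to the annihilator of $v$ by $h(v,w)=0$, is exactly the invariant picture behind the paper's matrix computation, and your identification of the weight bookkeeping as the only delicate point matches where the paper defers to the concrete realisation.
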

\begin{proof} The f\/irst part of the statement is obvious from the general setting for Fef\/ferman-type constructions and the Cartan-geometric description of oriented projective and conformal spin structures.

The second part is shown due to two natural identif\/ications: On the one hand, the Fef\/ferman space is by~\eqref{Mtilde} equal to the total space of the associated bundle $\wt{M}\cong\calG\times_{P} P/Q$ over $M$. On the other hand, the weighted cotangent bundle to $M$ is identif\/ied with the associated bundle $T^*M(2)\cong\G\x_P(\g/\p)^*(2)$ with respect to action of $P$ induced by the adjoint action and the representation~\eqref{projectivedens} for $w=2$. Hence it remains to verify that the action of $P$ on $(\g/\p)^*(2)\setminus\{0\}$ is transitive and $Q$ is a stabiliser of a non-zero element. But this is a purely algebraic task, which may be easily checked in a concrete matrix realisation.
\end{proof}

From the algebraic setup in Section~\ref{construction} we easily conclude number of specif\/ic features of the induced conformal structure on $\wt{M}$:
\begin{Proposition}\label{prop-objects}
The conformal spin structure $\big(\wt{M},\mb{c}\big)$ induced from an oriented projective structure $(M,\mb{p})$ by the Fefferman-type construction admits the following tractorial objects that are all parallel with respect to the induced tractor connection:
\begin{enumerate}\itemsep=0pt
\item[$(a)$] pure tractor spinors $\mb{s}_{E}\in\Ga\big(\wt{\mc{S}}_{\pm}\big)$ and $\mb{s}_{F}\in\Ga\big(\wt{\mc{S}}_{-}\big)$ with non-trivial pairing,
\item[$(b)$] a tractor endomorphism $\mb{K}\in\Ga\big(\mc{A}\wt{M}\big)$ which is an involution, i.e., $\mb{K}^2=\one_{\wt{\mc{T}}}$, and which acts by the identity, respectively minus the identity on the maximally isotropic complementary subbundles $\wt{\mc{E}}:=\ker\mb{s}_E$, respectively $\wt{\mc{F}}:=\ker\mb{s}_F$ of $\wt{\mc{T}}$.
\end{enumerate}
The corresponding underlying objects $\eta=\Pi_0^{\wt{\mc{S}}}(\mb{s}_E)$, $\chi=\Pi_0^{\wt{\mc{S}}}(\mb{s}_F)$ and $k=\Pi_0^{\mc{A}\wt{M}}(\mb{K})$ satisfy:
\begin{enumerate}\itemsep=0pt 
\item[$(c)$] $\eta\in\Ga\big(\wt{\Sigma}_\pm\big[\frac12\big]\big)$ and $\chi\in\Ga\big(\wt{\Sigma}_-\big[\frac12\big]\big)$ are pure spinors, whose kernels $\wt{e}:=\ker\eta$ and $\wt{f}:=\ker\chi$ have $1$-dimensional intersection and~$\wt{f}$ coincides with the vertical subbundle of $\wt{M}\to M$,
\item[$(d)$] $k\in\Ga\big(T\wt{M}\big)$ is a nowhere-vanishing light-like vector field generating the intersection $\wt{e}\cap\wt{f}$.
\end{enumerate}
\end{Proposition}

\begin{proof}The $G$-invariant spinor $s_E\in\De_\pm$ gives rise to the tractor spinor $\mb{s}_E\in\Ga\big(\wt{\mc{S}}_\pm=\G\x_Q \De_\pm\big)$ such that it corresponds to the constant ($Q$-equivariant) map $\G\to\De_\pm$. Hence $\mb{s}_E$ is automatically parallel with respect to the induced tractor connection on $\wt{\mc{S}}_\pm$. Similar reasoning for other $G$-invariant objects and their compatibility described above yield the f\/irst part of the statement. In particular, $\wt{\mc{E}}=\G\x_Q E$, $\wt{\mc{F}}=\G\x_Q F$ and the decomposition $\wt{\mc{T}}=\wt{\mc{E}}\oplus\wt{\mc{F}}$ corresponds to the decomposition~\eqref{decompR}.

The f\/iltration $L\subseteq L^\perp\subseteq\rr^{n+1,n+1}$ gives rise to the f\/iltration of the standard tractor bundle, which can be written as
\begin{gather*}
\begin{pmatrix}
\wt{\mbb{E}}[-1]\\ 0\\ 0
\end{pmatrix}
\subseteq
\begin{pmatrix}
\wt{\mbb{E}}[-1]\\ \wt{\mbb{E}}_a[1] \\ 0
\end{pmatrix}
\subseteq
\begin{pmatrix}
\wt{\mbb{E}}[-1]\\
\wt{\mbb{E}}_a[1] \\
\wt{\mbb{E}}[1]
\end{pmatrix} = \wt{\mc{T}}.
\end{gather*}
In particular, the subbundles associated to $\bar E,\bar F\subseteq L^\perp$ are distinguished by the middle slot. The corresponding $Q$-invariant maximally isotropic subspaces $e,f\subseteq\g/\q$ determine the distributions $\G\x_Q e$ and $\G\x_Q f$ in $T\wt{M}=\G\x_Q\g/\q$. According to the tractor Clif\/ford action~\eqref{traCli} it follows that these are precisely the kernels of the spinors~$\eta$ and~$\chi$. Since these subspaces are maximally isotropic, the corresponding spinors are pure. Since $f\cong\p/\q$ is the kernel of the projection $\g/\q\to\g/\p$, the corresponding subbundle~$\wt{f}$ is identif\/ied with the vertical subbundle of the projection $\wt{M}\to M$. The intersection $e\cap f$ is 1-dimensional and it is generated by the projection of $K\in\ti\g$ to $\ti\g/\ti\p$. Indeed, $K$ cannot be contained in $\ti{\p}$, since $K$ acts by the identity on $E$ and minus the identity on~$F$ and~$\ti\p$ is the stabiliser of a line that is neither contained in $E$ nor in $F$. Altogether, the corresponding vector f\/ield~$k$ on $\wt{M}$ is a nowhere-vanishing generator of~$\wt{e}\cap\wt{f}$, in particular, it is light-like.
\end{proof}

\subsection{Relating tractors, Weyl structures and scales}\label{Relpartra}
As a technical preliminary for further study we now relate natural objects associated to the original projective Cartan geometry $(\G,\om)$ on $M$ and the induced conformal geometry $(\wt\G,\wt{\om}^{\rm ind})$ on the Fef\/ferman space $\wt M$.

Since $G\subseteq\widetilde{G}$, any $\widetilde{G}$-representation $V$ is also a $G$-representation, which yields compatible tractor bundles over $M$ and $\wt M$ with compatible tractor connections: $\mathcal{V}=\mathcal{G}\times_{P} V\to M$ with the tractor connection~$\nabla$ induced by~$\om$ and $\widetilde{\mathcal{V}}=\widetilde{\mathcal{G}}\times_{\wt{P}} V=\mathcal{G}\times_{Q}V\to \widetilde{M}$ with the tractor connection~$\wt\nabla^{\rm ind}$ induced by $\wt\om^{\rm ind}$. Sections of $\mathcal{V}$ bijectively correspond to $P$-equivariant functions $\ph\colon \mathcal{G}\to V$, while sections of $\wt{\mathcal{V}}$ correspond to $Q$-equivariant functions $\ph\colon \mathcal{G}\to V$.
Since $Q\subseteq P$, every section of $\mathcal{V}$ gives rise to a section of $\wt{\mathcal{V}}$, and we can view $\Gamma(\mathcal{V})\subseteq\Gamma\big(\wt{\mathcal{V}}\big)$. Now, Proposition~3.2 in~\cite{cap-gover-cr-tractors} admits a straightforward generalisation to Fef\/ferman-type constructions for which $P/Q$ is connected and thus, in particular, to the one studied in this article:

\begin{Proposition} \label{mini}\quad
\begin{enumerate}\itemsep=0pt
\item[$(a)$] A section $s\in\Gamma\big(\widetilde{\mathcal{V}}\big)$ is contained in $\Gamma(\mathcal{V})$ $($i.e., the corresponding $Q$-equivariant function $\ph$ is indeed $P$-equivariant$)$ if and only if $\wt\nabla^{\rm ind} s$ is strictly horizontal $($i.e., $v^a\wt\nabla^{\rm ind}_a s=0$ for all $v^a\in\Ga\big(\wt f\big))$.
\item[$(b)$] The restriction of $\wt\nabla^{\rm ind}$ to $\Gamma(\mathcal{V})\subseteq\Gamma\big(\wt{\mathcal{V}}\big)$ coincides with the tractor connection~$\nabla$.
\end{enumerate}
\end{Proposition}

\begin{Remark}\label{rem-Weyl} Another instance of compatible bundles over $M$ and $\wt M$ is provided by the density bundles $\mbb{E}(w)$ and $\wt{\mbb{E}}[w]$, which are def\/ined via the representation of $P$ and $\wt{P}$ as in~\eqref{projectivedens} and~\eqref{conformaldens}, respectively. Restricting these representations to~$Q$, it easily follows that the notation is indeed compatible so that we can view $\Ga(\mbb{E}(w))\subseteq\Ga\big(\wt{\mbb{E}}[w]\big)$.

Both projective and conformal density bundles can be described as associated bundles to the respective bundles of scales. Hence everywhere positive sections of density bundles are considered as scales. In particular, the inclusion $\Ga(\mbb{E}_+(1))\subseteq\Ga\big(\wt{\mbb{E}}_+[1]\big)$ may be interpreted so that any projective scale induces a conformal one. Such conformal scales will be called~\emph{reduced scales}. An intrinsic characterisation of reduced scales among all conformal ones is formulated in Proposition~\ref{prop-nnscales}.
\end{Remark}

The previous remark yields that any projective exact Weyl structure on $M$ induces a conformal exact Weyl structure on $\wt M$. This fact can be generalised as follows:

\begin{Proposition}\label{prop-Weyl}
Any projective $($exact$)$ Weyl structure on $M$ induces a conformal $($exact$)$ Weyl structure on the Fefferman space~$\wt M$.
\end{Proposition}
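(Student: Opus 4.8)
The plan is to realise both kinds of Weyl structures as reductions of structure group, and to build the conformal reduction out of the projective one by means of the inclusion $j\colon\G\embed\wt\G$ from \eqref{Gtilde}. Recall that a projective Weyl structure is precisely a reduction $\G_0\embed\G$ of the $P$-bundle $\G\to M$ to the Levi subgroup $G_0=\GL(n)$, and that what I must produce is a reduction $\wt\G_0\embed\wt\G$ of the $\wt P$-bundle $\wt\G\to\wt M$ to $\wt G_0=\CSpin(n,n)$. Since $\wt\G=\G\times_Q\wt P$ and $j(uq)=j(u)\,i(q)$ for $q\in Q$, the map $j$ exhibits $\G$ as a $Q$-reduction of $\wt\G$ over $\wt M$, so all the data I need already sits inside $\wt\G$.

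First I would define the candidate reduction as the $\wt G_0$-orbit of the image of the projective Weyl structure,
\[
  \wt\G_0:=j(\G_0)\cdot\wt G_0\subseteq\wt\G,
\]
where $\wt G_0\subseteq\wt P$ via the grading splitting. To see that this is a genuine $\wt G_0$-principal subbundle surjecting onto $\wt M$, I would argue fibrewise. Fix $\tilde x\in\wt M$ over $x\in M$ and work inside the $P$-torsor $\G_x$. The first point is that $\G_0$ already meets every fibre of $\G\to\wt M$: this amounts to $G_0\,Q=P$, which I would deduce from the dimension count $\dim G_0+\dim Q-\dim(G_0\cap Q)=n^2+n^2-(n^2-n)=\dim P$ together with connectedness. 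Hence $\G_0\cap\G_{\tilde x}$ is a single $Q_0$-orbit $u_0Q_0$, where $Q_0:=G_0\cap Q$, and over $\tilde x$ the candidate fibre is $j(u_0)\,i(Q_0)\,\wt G_0$.

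The construction therefore works if and only if $i(Q_0)\subseteq\wt G_0$, for then $i(Q_0)\wt G_0=\wt G_0$ and the fibre of $\wt\G_0$ over $\tilde x$ is the single $\wt G_0$-orbit $j(u_0)\,\wt G_0$; freeness and smoothness are then automatic, as $\wt G_0\subseteq\wt P$ acts freely on $\wt\G$. This containment is exactly the compatibility between the two a priori unrelated gradings, and it is the main obstacle: it has to be extracted from the explicit matrix realisations of Appendix \ref{matrices} by checking that an element of $\GL(n)$ which in addition stabilises the ray $\rr\tilde v$ is mapped by $i$ into the grading-zero part $\wt G_0$ of $\wt P$ (equivalently, that its $\wt\p_+$-component vanishes). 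The exact case is handled by the same construction applied to the semisimple parts, replacing $G_0,\wt G_0$ by $G_0^{ss}=\SL(n)$ and $\wt G_0^{ss}=\Spin(n,n)$ and verifying $i(Q_0\cap G_0^{ss})\subseteq\wt G_0^{ss}$.

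For the final assertion about scales, recall that a projective scale is a positive $P$-equivariant function $\G\to\rr$ of the weight in \eqref{projectivedens}, i.e.\ a positive section of some $\mbb{E}(w_0)$. Since $Q\subseteq P$, such a function is automatically $Q$-equivariant, and via the identification $\wt{\mbb{E}}[w]=\wt\G\times_{\wt P}\rr=\G\times_Q\rr$ it descends to a section of a conformal density bundle $\wt{\mbb{E}}[w_1]$ on $\wt M$. The only thing to verify is that the two resulting real characters of $Q$ agree, namely that $q\mapsto|\det A|^{w_0}$ restricted to $Q$ coincides with the character $q\mapsto a(q)^{-w_1}$ obtained from $Q\embed\wt P$ and \eqref{conformaldens}; this is a short computation in the matrix model which also fixes the relation between the projective weight $w_0$ and the conformal weight $w_1$. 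Positivity is preserved, so the induced section is a conformal scale, matching the $\Spin(n,n)$-reduction produced above.
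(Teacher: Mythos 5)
Your proposal is correct and follows essentially the same route as the paper: both arguments reduce the problem to the group-level compatibility $Q_0=G_0\cap Q\subseteq\wt G_0$ (checked in the explicit matrix realisation of Appendix \ref{matrices}), and your orbit $j(\G_0)\cdot\wt G_0$ is exactly the paper's associated bundle $\wt\G_0=\G_0\times_{Q_0}\wt G_0$ embedded in $\wt\G$, with the exact case and the density-weight comparison handled the same way. The only cosmetic difference is that you establish surjectivity onto $\wt M$ via $G_0Q=P$ by a dimension count plus a connectedness appeal, whereas this identity follows immediately from $P=G_0\ltimes P_+$ and $P_+\subseteq Q$ (which also avoids the slight gap that an open subset of a connected group need not be the whole group).
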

\begin{proof} A version of this result in a more general context was proved in \cite[Proposition~6.1]{jesse-quaternionic}: any Weyl structure for $\om$ induces a Weyl structure for $\wt\om^{\rm ind}$ if $P_+\subseteq\wt P$ and $\big(G_0\cap\wt P\big)\subseteq\wt G_0$. But both these conditions are satisf\/ied as follows from the setup in Section~\ref{construction} and explicit realisations in Appendix~\ref{appendixB}.
\end{proof}

Conformal Weyl structures induced by projective ones as above will be called \emph{reduced Weyl structures}.

\subsection{Normality}\label{normality}
Here we show that our Fef\/ferman-type construction does not preserve the normality in general, see Proposition~\ref{prop-nonnormal}.
This can be shown directly as we did in a previous version of the article, see \href{https://arxiv.org/abs/1510.03337v2}{arXiv:1510.03337v2}. 
Alternatively, we can treat the construction as the composition of two other constructions via a natural intermediate Lagrangean contact structure.

A \emph{Lagrangean contact structure} on $M'$ consists of a contact distribution $\mathcal{H}\subseteq TM'$ together with a decomposition $\mathcal{H}=e' \oplus f'$ into two subbundles that are maximally isotropic with respect to the Levi form $\mc{H}\x\mc{H}\to TM'/\mc{H}$.
Such structure on a manifold $M'$ of dimension $2n-1$ is equivalently encoded as a normal parabolic geometry of type~$(G,P')$, where $G=\SL(n+1)$
and $P'\subseteq G$ is the stabiliser of a f\/lag of type line-hyperplane in the standard representation~$\rr^{n+1}$. For $n>2$ there are three harmonic curvatures, two of which are torsions whose vanishing is equivalent to the integrability of the respective subbundles $e',f'\subseteq\mc{H}$. For $n=2$ there are two harmonic curvatures of homogeneity~4, hence the Cartan connection is torsion-free. In that case both~$e'$ and $f'$ are 1-dimensional and thus automatically integrable.

On the one hand, $P'$ is contained in $P$, where $P\subseteq G$ is the stabiliser of a ray in $\rr^{n+1}$. For suitable choices as in Appendix~\ref{appendixB}, the Lie algebra to~$P'$ consists of matrices of the form
\begin{gather*}
\p'=\begin{pmatrix}
a&U^t&w\\
0 & B &V\\
0&0& c
\end{pmatrix}.
\end{gather*}
Given a projective Cartan geometry $(\G\to M,\om)$ of type $(G,P)$, it turns out that the correspondence space $M':=\G/P'$ can be identif\/ied with the projectivised cotangent bundle $\mc{P}(T^*M)$. The Cartan geometry $(\G\to M',\om)$ of type $(G,P')$ is regular and thus it covers a natural Lagrangean contact structure on~$M'$. In particular, the canonical contact distribution on $\mc{P}(T^*M)$ coincides with~$\mc{H}$ and
the vertical subbundle of the projection $M'\to M$ coincides with one of the two distinguished subbundles, say $f'\subseteq\mc{H}$. As in general, this construction preserves normality. In accord with~\cite{cap-correspondence}, respectively \cite[Section~4.4.2]{cap-slovak-book} we may state:
\begin{Proposition} Let $(\mathcal{G}\to M,\omega)$ be a normal projective parabolic geometry and let $(\G\to M',\omega)$ be the corresponding normal Lagrangean contact parabolic geometry. The latter geometry is torsion-free if and only if $n=2$ or it is flat, i.e., the initial projective structure is flat.
\end{Proposition}

On the other hand, $P'$ contains $Q$, where $Q=G\cap\wt{P}$ as before. This allows us to consider the Fef\/ferman-type construction for the pairs $(G,P')$ and $(\wt{G},\wt{P})$. Given a Lagrangean contact structure on~$M'$, it induces a conformal spin structure on $\wt{M}=\G/Q$. This construction is indeed very similar to the original Fef\/ferman construction; one deals with dif\/ferent real forms of the same complex Lie groups in the two cases. That is why the following statement and its proof is analogous to the one for the CR case. Following \cite{cap-gover-cr-tractors}, respectively \cite[Section~4.5.2]{cap-slovak-book} we may state:
\begin{Proposition}\label{PropLagNor} Let $(\G\to M',\omega)$ be the normal Lagrangean contact parabolic geometry and let $\big(\wt{\G}\to\wt M,\wt{\om}^{\rm ind}\big)$ be the conformal parabolic geometry obtained by the Fefferman-type construction. Then $\wt{\om}^{\rm ind}$ is normal if and only if $\om$ is torsion-free.
\end{Proposition}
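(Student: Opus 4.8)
The plan is to compare the two normality conditions directly through the curvature relation \eqref{katilde}. Since the target geometry is $|1|$-graded, the second part of its Kostant codifferential vanishes, $\wt{\partial}^{*}_2=0$ (cf.\ \eqref{normal}), and I would run the computation from the proof of Proposition \ref{prop-norm} almost verbatim: using that $\wt{\kappa}$ annihilates the (one-dimensional) vertical subspace $\p'/\q$ of $\wt M\to M'$, the condition $\wt{\partial}^*\wt{\kappa}=0$ may be checked on $u\in\G$ and $X\in\g$. Fixing a basis $\{X_i\}$ of $\g/\p'$ with dual bases $\{Z_i\}\subseteq\p'_+$ and $\{\wt Z_i\}\subseteq\ti\p_+$, and splitting $\wt Z_i=Z_i+(\wt Z_i-Z_i)$ with $\wt Z_i-Z_i\in\g^\perp$, this reads
\begin{align*}
  \wt{\partial}^*\wt{\kappa}(u)(X)=i'\big(\partial^*_1\kappa(u)(X)\big)+2\sum_i\big[\,i'\kappa(u)(X_i,X),\ \wt Z_i-Z_i\,\big].
\end{align*}
As the decomposition $\ti\g=\g\oplus\g^\perp$ is $\g$-invariant, the first summand lies in $\g$ and the second in $\g^\perp$; hence $\wt{\om}$ is normal if and only if $\partial^*_1\kappa=0$ (equivalently, since $\om$ is normal, $\partial^*_2\kappa=0$) and the $\g^\perp$-term vanishes.

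For the implication ``$\wt{\om}$ normal $\Rightarrow$ $\om$ torsion-free'' I would argue more cheaply. A normal conformal Cartan connection is torsion-free, so $\wt{\kappa}$ is $\ti\p$-valued. Under the identification $\ti\g/\ti\p\cong\g/\q$ the $\ti\g/\ti\p$-valued part (the torsion) of $\wt{\kappa}$ corresponds, via \eqref{katilde}, to $\kappa$ followed by the projection $\g\to\g/\q$; its vanishing forces $\kappa$ to be $\q$-valued, hence a fortiori $\p'$-valued, which is exactly torsion-freeness of $\om$.

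For the converse, ``$\om$ torsion-free $\Rightarrow$ $\wt{\om}$ normal'', the plan is to verify the hypotheses of Proposition \ref{prop-norm}: that $\kappa$ is valued in $\g\cap\ti\p=\q$ and that $\partial^*_1\kappa=\partial^*_2\kappa=0$ separately. Torsion-freeness only gives $\kappa\in\p'=\q\oplus\rr E_0$, so I must rule out the central trace direction $E_0\in\g_0\setminus\q$; I expect this to follow from the fact that the harmonic curvature of a torsion-free Lagrangean contact structure sits in a trace-free $\g_0$-submodule of $\La^2\g_{-1}^*\otimes\g_0$ (the relevant Lie algebra cohomology being computed in \cite{cap-slovak-book}), with regularity and normality propagating the vanishing of the trace part to all homogeneities. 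For the separate vanishing there is a clean simplification: $\partial^*_2\kappa$ vanishes identically on $\g_{-1}$, because for $X\in\g_{-1}$ and $Z_i\in\p'_+$ one has $[Z_i,X]\in\g_{\geq 0}=\p'$, which is annihilated by $\kappa$. Thus $\partial^*_2\kappa=0$ reduces to a single trace condition on $\g_{-2}$, whose $\g_0$-component is killed by normality and whose remaining part, like the $\g^\perp$-term above, I would evaluate using the explicit brackets of Appendix \ref{matrices} together with the relations \eqref{useful}.

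The hard part is precisely the genuinely $|2|$-graded feature absent from the projective construction: the presence of the $\partial^*_2$-contribution and of the central trace direction $E_0$. The crux is to show that torsion-freeness forces both $\partial^*_2\kappa=0$ and the $\g^\perp$-term to vanish---equivalently, that it annihilates the $E_0$-component of $\kappa$---and this is the step demanding the homogeneity-and-trace analysis of the Lagrangean contact harmonic curvature in the matrix realisation, carried out in close analogy with the CR Fefferman construction \cite{cap-gover-cr,graham-sparling}.
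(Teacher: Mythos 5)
Your proposal follows essentially the same route as the paper: the converse direction via torsion-freeness of the normal conformal connection together with $\g\cap\ti\p=\q\subseteq\p'$, and the forward direction by verifying the hypotheses of Proposition \ref{prop-norm}. The two facts you single out as ``the hard part'' --- that for torsion-free Lagrangean contact structures $\ka$ takes values in ${\g_0'}^{ss}\oplus\p'_+\subseteq\q$ (no central trace component) and that $\partial^*_1\ka$ and $\partial^*_2\ka$ vanish separately --- are exactly what the paper does not re-prove either, but imports from Subsection~3.8 of \cite{cap-zadnik-chains}; your sketch of how to establish them is plausible but would need to be completed along those lines.
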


Altogether, composing the two previous steps we obtain our projective-to-conformal Fef\/fer\-man-type construction with the desired control of the normality. Note that from \eqref{flag} and the respective matrix realisations it follows that the induced objects on $\wt{M}=T^*M(2)\setminus\{0\}$ from Proposition~\ref{prop-objects} correspond to the induced objects on $M'=\mc{P}(T^*M)$. In particular, the vertical subbundle of the projection $\wt{M}\to M'$ is spanned by $k$ and the decomposition $k^\perp =\wt{e}\oplus\wt{f}\subseteq T\wt{M}$ descends to the decomposition $\mc{H}=e'\oplus f'\subseteq TM'$
$$
\xymatrix@R=.8\baselineskip{
& & \wt\G \ar[ddd]^(.4){\wt P} \\
& & \\
\G\ \ar@{^{(}->}[uurr] \ar[drr]^(.55)Q \ar[ddr]^(.46){P'} \ar[ddd]_(.4)P & & \\
& & \wt M \ar[dl] \\
& M' \ar[dl] & \\
M. & & \\
}
$$

\begin{Proposition}\label{prop-nonnormal}Let $(\mathcal{G}\to M,\omega)$ be a normal projective parabolic geometry and let $\big(\widetilde{\mathcal{G}}\to\wt M,\widetilde{\omega}^{\rm ind}\big)$ be the conformal parabolic geometry obtained by the Fefferman-type construction.
\begin{enumerate}\itemsep=0pt
 \item[$(a)$] If $\dim M=2$ then $\wt{\om}^{\rm ind}$ is normal.
 \item[$(b)$] If $\dim M>2$ then $\wt{\om}^{\rm ind}$ is normal if and only if $\om$ is flat.
\end{enumerate}
Moreover, independently of the dimension of $M$, $\wt{\om}^{\rm ind}$ is flat if and only if $\om$ is flat.
\end{Proposition}

\subsection{Remarks on torsion-free Lagrangean contact structures}\label{sec-Lagrange-contact}
At this stage it is easy to formulate a local characterisation of split-signature conformal structures arising from torsion-free Lagrangean contact structures, see Proposition~\ref{PropLagChar}. As before, the results and their proofs are very analogous to those in the CR case, therefore we just quickly indicate the reasoning and point to dif\/ferences.

As in Proposition~\ref{prop-objects}, the $G$-invariant algebraic objects induce the tractor f\/ields $\mb{s}_E$, $\mb{s}_F$ and $\mb{K}$ on the conformal Fef\/ferman space that are parallel with respect to the induced tractor connection and have the required compatibility properties. But, starting with a torsion-free Lagrangean contact structure, the induced connection is already normal. In particular, the corresponding underlying objects $\chi$, $\eta$ and $k$ are pure twistor spinors and a light-like conformal Killing f\/ield, respectively.

The existence of parallel tractors $\mb{s}_E$, $\mb{s}_F$ and $\mb{K}$ with the algebraic properties as in Proposition~\ref{prop-objects} are by no means independent conditions:

\begin{Proposition}\label{PropLagCond} Let $\big(\wt M,\mb{c}\big)$ be a conformal spin structure of split-signature $(n,n)$. Then the following conditions are locally equivalent:
\begin{enumerate}\itemsep=0pt
\item[$(a)$] The spin tractor bundle admits two pure parallel tractor spinors $\mb{s}_E\in\Gamma\big(\wt{\mc{S}}_{\pm}\big)$ and $\mb{s}_F\in\Gamma\big(\widetilde{\mc{S}}_-\big)$ with non-trivial pairing.
\item[$(b)$] The conformal holonomy $\Hol(\mb{c})$ reduces to $\SL(n+1)\subseteq\Spin(n+1,n+1)$ preserving a~decomposition into maximally isotropic subspaces $E\oplus F=\mathbb{R}^{n+1,n+1}$.
\item[$(c)$] The adjoint tractor bundle admits a parallel involution $\mathbf{K}\in\Gamma\big(\mathcal{A}\wt{M}\big)$, i.e., $\mathbf{K}^2=\id_{\wt{\mc T}}$.
\end{enumerate}
\end{Proposition}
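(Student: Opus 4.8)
The plan is to prove all three equivalences through the standard dictionary between parallel tractor fields and holonomy reductions: a section of an associated tractor bundle $\wt{\G}\times_{\wt P}V$ is parallel for the induced tractor connection precisely when the corresponding element of $V$ is fixed by the holonomy group $\Hol(\mb{c})\subseteq\wt G=\Spin(n+1,n+1)$, since that tractor bundle is associated to the holonomy reduction of $\hat{\wt\om}$. I would take condition (b) as the hub and establish (a)$\Leftrightarrow$(b) and (b)$\Leftrightarrow$(c) separately, drawing throughout on the algebraic facts assembled in Subsection~\ref{construction}.

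First I would treat (a)$\Leftrightarrow$(b). By the dictionary, $\mb{s}_E$ and $\mb{s}_F$ are parallel if and only if $\Hol(\mb{c})$ fixes the spinors $s_E\in\De^{n+1,n+1}_\pm$ and $s_F\in\De^{n+1,n+1}_-$. By the very definition of $G$ in Subsection~\ref{construction}, the joint stabiliser of $s_E$ and $s_F$ in $\wt G$ is exactly $G\cong\SL(n+1)$, so this is equivalent to $\Hol(\mb{c})\subseteq\SL(n+1)$. The purity of the tractor spinors is equivalent to purity of $s_E,s_F$, which by the discussion after \eqref{decompR} means their kernels $E,F$ are maximally isotropic; the non-trivial pairing then forces $E$ and $F$ to be complementary and $h$-dual, yielding the invariant decomposition $\rr^{n+1,n+1}=E\oplus F$. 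This is precisely condition (b).

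Next, (b)$\Rightarrow$(c) is immediate: the involution $K$ is $G$-invariant by construction (it acts as $\pm\one_{\wt{\mc{T}}}$ on $E,F$ and is fixed by $\SL(n+1)$), so a reduction $\Hol(\mb{c})\subseteq\SL(n+1)$ makes the associated tractor endomorphism $\mb{K}$ parallel, and $\mb{K}^2=\one_{\wt{\mc{T}}}$ follows from $K^2=\id$ on $\rr^{n+1,n+1}$. For the converse (c)$\Rightarrow$(b), a parallel $\mb{K}$ with $\mb{K}^2=\one_{\wt{\mc{T}}}$ means $\Hol(\mb{c})$ centralises $K\in\ti\g=\so(n+1,n+1)$, hence preserves the $\pm1$-eigenbundles $\wt{\mc E},\wt{\mc F}$ of $\mb{K}$ on $\wt{\mc{T}}$; these are $\wt\na$-parallel, maximally isotropic and $\mb{h}$-dual, with $\wt{\mc{T}}=\wt{\mc E}\oplus\wt{\mc F}$. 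Each maximally isotropic parallel subbundle determines a parallel pure spinor line in $\wt{\mc{S}}_\pm$, and the relation \eqref{eq-K-sE-sF} recovers $K$ bilinearly from $s_E,s_F$, thereby pinning down the product $\mb{s}_E\otimes\mb{s}_F$ and the pairing.

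The step I expect to be the main obstacle is exactly this passage from a parallel involution to genuine parallel pure spinors in (c)$\Rightarrow$(b): the centraliser of $K$ in $\Spin(n+1,n+1)$ is the stabiliser of the decomposition $E\oplus F$, which is strictly larger than $G\cong\SL(n+1)$ by the scaling acting on $E$ and $F$ by reciprocal factors, and this scaling acts non-trivially on $s_E$ and $s_F$ through $(\det)^{\pm 1/2}$. Thus the parallel spinor \emph{lines} must be promoted to parallel \emph{sections}, i.e.\ the residual determinant factor has to be eliminated. I would handle this by normalising the pure spinors against the $\wt\na$-parallel tractor volume form on $\La^{2n+2}\wt{\mc{T}}$ (available since $\Hol(\mb{c})\subseteq\SO(n+1,n+1)$) together with the conformal spin structure, which supplies the requisite square root of $\La^{n+1}\wt{\mc E}$; this fixes the scale of $\mb{s}_E$, and dually of $\mb{s}_F$, and carries out the remaining reduction from the stabiliser of $K$ down to $\SL(n+1)$. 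This determinant-versus-special-linear subtlety is the analogue of the $\U$-versus-$\SU$ distinction familiar from the CR Fefferman construction in \cite{cap-gover-cr,graham-sparling}, and it is where the conformal spin hypothesis is genuinely used.
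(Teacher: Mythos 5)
Your treatment of (a)$\Leftrightarrow$(b) and (b)$\Rightarrow$(c) matches the paper's and is fine. The genuine gap is in (c)$\Rightarrow$(b): you correctly isolate the obstacle --- a parallel involution only reduces the holonomy to the stabiliser of the decomposition $E\oplus F$, which is (a double cover of) $\GL(n+1)$ acting as $A\oplus(A^t)^{-1}$, strictly larger than $\SL(n+1)$ --- but your proposed fix does not close it. The parallel tractor volume form on $\La^{2n+2}\wt{\mc{T}}$ only controls the product $\det\nolimits_E\cdot\det\nolimits_F$, and this is automatically $1$ for any element of $\SO(n+1,n+1)$ preserving the dual pair $(E,F)$; it therefore carries no information about $\det\nolimits_E$ itself. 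The pure spinor lines in $\wt{\mc{S}}_\pm$ determined by $\wt{\mc E}$ and $\wt{\mc F}$ are acted on by the residual $\GL(n+1)$-holonomy through the characters $(\det)^{\pm1/2}$, and no choice of normalisation can produce a parallel section of a line bundle whose holonomy representation is nontrivial. The determinant factor is a genuine holonomy obstruction that must be \emph{proved} to vanish, not normalised away.

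The paper's proof does exactly this: by Proposition 2.1 of \cite{cap-gover-cr}, a parallel adjoint tractor pairs trivially with the normal conformal curvature, $\langle\wt{\Omega},\mb{K}\rangle=0$, which forces $\wt{\Omega}$ to take values in the trace-free part $\sl(n+1)\subseteq\gl(n+1)$. Since $\sl(n+1)$ is an ideal in $\gl(n+1)$, the Ambrose--Singer argument shows the holonomy Lie algebra lies in $\sl(n+1)$, so the restricted holonomy group is contained in $\SL(n+1)$ and the parallel pure spinors exist \emph{locally}. This curvature step is the missing idea in your argument, and it also explains why the proposition asserts only local equivalence --- a fact your (purported) global normalisation argument would contradict.
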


The only subtle point within the proof concerns the consequences of property (c). The existence of a parallel skew-symmetric involution $\mb{K}$ on the standard tractor bundle immediately implies that the conformal holonomy $\Hol(\mb{c})$ is reduced to $\GL(n+1)$. But, analogously to the corresponding discussion for the CR case in~\cite{cap-gover-cr} or~\cite{felipe-unitaryholonomy}, one can show that $\Hol(\mb{c})$ is actually contained in $\SL(n+1)$. The rest follows easily.

It turns out that conformal spin structures induced by torsion-free Lagrangean contact structures are locally characterised by any of the three equivalent conditions above. Indeed, according to results from~\cite{CGH-holonomy}, the holonomy reduction of the conformal structure to $G=\SL(n+1)\subseteq\Spin(n+1,n+1)=\wt G$ yields the so-called curved orbit decomposition of~$\wt M$, which corresponds to the decomposition of the homogeneous model $\wt G/\wt P$ with respect to the action of $G$. Each subset from the decomposition of~$\wt M$, provided it is non-empty, further carries a geometry of the same type as its counterpart in the homogeneous model. From Section~\ref{homogen} we know there is one open and two closed $n$-dimensional orbits. The closed $n$-dimensional orbits carry Cartan geometries of type~$(G,P)$, and thus inherit projective structures, the open orbit carries a Cartan geometry of type $(G,Q)$. Note that the two closed orbits coincide with the zero sets of~$\chi$ and~$\eta$, the open subset is the one where both spinors, and thus $k$, are non-vanishing. Since $k$ is the conformal Killing f\/ield corresponding to the parallel adjoint tractor $\mb{K}$, it inserts trivially into the curvature of the normal Cartan connection, cf.~\eqref{cKf}. Hence, according to~\cite{cap-correspondence}, the Cartan geometry of type $(G,Q)$ on the open orbit of~$\wt M$ descends to a Cartan geometry of type $(G,P')$ on the local leaf space~$M'$ determined by~$k$. It follows that this Cartan geometry is torsion-free and thus determines a torsion-free Lagrangean contact structure. Altogether, following~\cite{cap-gover-cr} we may state the following characterisation:

\begin{Proposition}\label{PropLagChar}A split-signature conformal spin structure is locally induced by a torsion-free Lagrangean contact structure via the Fefferman-type construction if and only if any of the equivalent conditions from Proposition~{\rm \ref{PropLagCond}} holds and the underlying twistor spinors $\chi$ and $\eta$ and the conformal Killing field $k$ are nowhere-vanishing.
\end{Proposition}

\subsection[The exceptional case: dimension $n=2$]{The exceptional case: dimension $\boldsymbol{n=2}$} \label{sec-dim2}
From Section~\ref{normality} we know that the intermediate 3-dimensional Lagrangean contact structure on $M'$ induced by a 2-dimensional projective structure on $M$ is torsion-free. Hence the induced conformal Cartan geometry on $\wt M$ is normal and thus all the equivalent conditions from Proposition \ref{PropLagCond} are satisf\/ied. Moreover, the fact that it comes from a projective structure implies that any vertical vector of the projection $\wt M\to M$ inserts trivially into the Cartan curvature, i.e.,
\begin{gather}\label{vertcurv}
\io_{X}\wt{\ka}(u)=0, \qquad \text{for all $X\in f$, $u\in\wt\G$}.
\end{gather}
Analogously to the discussion before Proposition \ref{PropLagChar} we may conclude:

\begin{Proposition} A conformal spin structure of signature $(2,2)$ is locally induced by a $2$-dimensional projective structure via the Fefferman-type construction if and only if any of the equivalent conditions from Proposition~{\rm \ref{PropLagCond}} holds, the underlying twistor spinors $\chi$ and $\eta$ and the conformal Killing field $k$ are nowhere-vanishing and the curvature of the normal conformal Cartan connection satisfies~\eqref{vertcurv}.
\end{Proposition}

\begin{Remark}Conformal structures induced from $2$-dimensional projective structures are well-studied, see, e.g., \cite{crampin-saunders, dunajski-tod, nurowski-sparling-cr}. Notably, the intermediate $3$-dimensional Lagrangean contact structure can be equivalently viewed as a path geometry (or the geometry associated to second order ODEs modulo point transformations). Such structure is induced by a projective structure (i.e., the paths are the unparametrised geodesics of the projective class of connections) if and only if one of the two harmonic curvatures vanishes. It follows from~\cite{nurowski-sparling-cr} that this is equivalent to vanishing of the self-dual, respectively anti-self-dual part of the Weyl curvature of the induced conformal structure. In particular, the condition~\eqref{vertcurv} in the previous proposition can be replaced by the condition that the conformal structure is half-f\/lat.
\end{Remark}

\section{Normalisation and characterisation}\label{nor&char}
By Proposition \ref{prop-nonnormal}, for $n\geq 3$, the induced conformal Cartan connection associated to a non-f\/lat $n$-dimensional projective structure dif\/fers from the normal conformal Cartan connection for the induced conformal structure. In this section we will analyse the form of the dif\/ference and thus derive properties of the induced conformal structures. Furthermore, we will show that any split-signature conformal manifold having these properties is locally equivalent to the conformal structure on the Fef\/ferman space over a projective manifold.

\subsection{The normalisation process} \label{nprocess}
We are going to normalise the conformal Cartan connection $\wt\om^{\rm ind}\in\Om^1\big(\wt\G,\ti\g\big)$ that is induced by a~normal projective Cartan connection $\om\in\Om^1(\G,\g)$. Any other conformal Cartan connection~$\wt\om'$ dif\/fers from~$\wt\om^{\rm ind}$ by some $\Psi\in\Om^1\big(\wt\G,\ti\g\big)$ so that $\wt\om'=\wt\om^{\rm ind}+\Psi$. This $\Psi$ must vanish on vertical f\/ields and be $\wt P$-equivariant. The condition on ${\wt\om}'$ to induce the same conformal structure on $\wt M$ as $\wt\om^{\rm ind}$ is that $\Psi$ has values in $\ti\p\subseteq\ti\g$. One can therefore regard $\Psi$ as a $\wt P$-equivariant function $\Psi\colon \wt\G\goesto (\ti\g/\ti\p)^*\t\ti\p$. According to the general theory as outlined in \cite[Section~3.1.13]{cap-slovak-book} there is a unique such $\Psi$ such that the curvature function $\wt \kappa'$ of $\wt\om'$ satisf\/ies $\wt{\partial}^* \wt \ka'=0$, and then $\wt\om'$ is the normal conformal Cartan connection $\wt\om^{\rm nor}$.

The failure of $\wt\om^{\rm ind}$ to be normal is given by $\wt{\partial}^*\wt \ka^{\rm ind}\colon \wt\G\goesto (\ti\g/\ti\p)^*\t\ti\p$. The normalisation of $\wt\om^{\rm ind}$ proceeds by homogeneity of $(\ti\g/\ti\p)^*\t\ti\p$, which decomposes into two homogeneous components according to the decomposition $\ti\p=\ti\g_0\oplus\ti\p_{+}$. In the f\/irst step of normalisation one looks for a~${\Psi}^{1}$ such that ${\wt\om}^{1}=\wt\om+{\Psi}^{1}$ has $\wt{\partial}^*\wt\ka^1$ taking values in the highest homogeneity, i.e., $\wt{\partial}^*\wt\ka^1\colon \wt\G\goesto (\ti\g/\ti\p)^*\t\ti\p_+$.

To write down this f\/irst normalisation we employ Weyl structures $\wt\G_0{\embed}\wt\G$. By Proposition~\ref{prop-Weyl} we can take a reduced Weyl structure, i.e., one that is induced by a reduction $ \G_0{\embed}\G\embed\wt \G$ with respect to the structure group $Q_0:=Q\cap G_0$. This allows us to project ${\wt\partial}^*\wt \ka^{\rm ind}$ to $\big({\wt\partial}^*\wt\ka^{\rm ind}\big)_{0}\colon \G_0\goesto (\ti\g/\ti\p)^*\t\ti\g_0$ and to employ the $\wt G_0$-equivariant Kostant Laplacian $\wt\lapl\colon (\ti\g/\ti\p)^*\t\ti\g_0\goesto (\ti\g/\ti\p)^*\t\ti\g_0$, $\wt\lapl:=\wt{\partial}\circ\wt{\partial}^*+\wt{\partial}^*\circ\wt{\partial}$. For the f\/irst normalisation step we need to form a map ${\Psi}^{1}\colon \wt\G\goesto (\ti\g/\ti\p)^*\t\ti\p$ that agrees with $-\wt\lapl^{-1}\big(\wt\delstar\wt\ka^{\rm ind}\big)_{0}$ in the $\ti\g_0$-component. If we have formed any such ${\Psi}^{1}$ along $\G_0{\embed}\wt\G$ we can just equivariantly extend this to all of~$\wt\G$.

To proceed with the analysis of the normalisation we need to establish a couple of technical lemmas. As before, we denote by $\f\subset\ti\p_+\cong(\ti\g/\ti\p)^*$ the annihilator of $f=\p/\q\subset\g/\q\cong\ti\g/\ti\p$. Recall that $\f=\varphi(\p_+)\cong f[-2]$.

\begin{Lemma} \label{forJosef} Let $V$ be a $\g$-representation contained in a $\ti{\g}$-representation $\wt{V}$ and denote by $\phi\mapsto\wt\phi$ the inclusion $\Lambda^k\p_+\otimes V \hookrightarrow \Lambda^k\ti{\p}_+ \otimes \wt{V}$ induced by $\varphi\colon \p_+\to\ti\p_+$ and $V\embed\wt{V}$. Then, for any $\phi\in\Lambda^{k}\mathbb{\p}_+\otimes V$,
\begin{gather*}
\wt{\partial^*\phi}-{\wt{\partial}}^*\wt{\phi} \in\Lambda^{k-1}\f\otimes\big(\Lambda^2 \bar{F}\bullet V\big)\subseteq \Lambda^{k-1}\ti{\p}_+\otimes \wt{V}.
\end{gather*}
In particular, for the adjoint representations, $\delstar\phi=0$ if and only if $\wt{\partial}^*\wt\phi \in \Lambda^{k-1}\f\otimes\Lambda^2 \bar{F}$.
\end{Lemma}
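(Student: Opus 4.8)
The plan is to evaluate both $\wt{\partial^*\phi}$ and $\wt\partial^*\wt\phi$ through the explicit formula for the Kostant codifferential as the Lie algebra homology differential of $\p_+$ (resp.\ $\ti\p_+$) with coefficients in $V$ (resp.\ $\wt V$), cf.\ \cite[Section 3.3.1]{cap-slovak-book}. The decisive simplification is that both parabolic geometries here are $|1|$-graded, so $\p_+$ and $\ti\p_+$ are abelian; hence the ``bracket part'' of $\partial^*$ and of $\wt\partial^*$ — the terms containing $[Z_i,Z_j]$ resp.\ $[\wt Z_i,\wt Z_j]$ — vanishes identically, and only the ``action part'' survives.

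First I would fix adapted bases exactly as in the proof of Proposition \ref{prop-norm}: a basis $X_1,\dots,X_n$ of $\g/\p$ with dual basis $Z_1,\dots,Z_n\in\p_+$, extended by $X_{n+1},\dots,X_m$ ($m=2n$) so that $X_1,\dots,X_m$ project to a basis of $\g/\q\cong\ti\g/\ti\p$, with dual basis $\wt Z_1,\dots,\wt Z_m\in\ti\p_+$. By construction $\wt Z_1,\dots,\wt Z_n$ span $\f$ and $\varphi(Z_j)=\wt Z_j$ for $j\le n$, so the inclusion $\phi\mapsto\wt\phi$ sends a decomposable element $Z_{i_1}\wedge\dots\wedge Z_{i_k}\otimes v$ (all indices $\le n$) to $\wt Z_{i_1}\wedge\dots\wedge\wt Z_{i_k}\otimes v$. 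Applying the two homology differentials and subtracting, the wedge factors agree term by term and one is left with
\begin{align*}
  \wt{\partial^*\phi}-\wt\partial^*\wt\phi
  =\sum_{j=1}^{k}(-1)^{j+1}\,\wt Z_{i_1}\wedge\dots\wedge\widehat{\wt Z_{i_j}}\wedge\dots\wedge\wt Z_{i_k}\otimes\big((Z_{i_j}-\wt Z_{i_j})\bullet v\big),
\end{align*}
whose wedge factors are manifestly in $\Lambda^{k-1}\f$.

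The crux, and the main obstacle, is the algebraic claim that $\wt Z_j-Z_j\in\Lambda^2\bar F$ for $j\le n$ — a genuine refinement of the containment $\wt Z_j-Z_j\in\g^{\perp}$ from Proposition \ref{prop-norm}, and the point where the specific geometry of the embedding enters. I would argue as follows: since $\wt Z_j\in\f=\ti\p_+\cap\ker s_F$ and $\ker s_F=\sl(n+1)\oplus\Lambda^2 F$ by \eqref{kers}, we may write $\wt Z_j=a_j+\psi_j$ with $a_j\in\g$ and $\psi_j\in\Lambda^2 F$. Comparing with $\wt Z_j-Z_j\in\g^{\perp}$ and using that the Killing form is non-degenerate on the semisimple $\g$, so $\g\cap\g^{\perp}=0$, forces $a_j=Z_j$ and hence $\wt Z_j-Z_j=\psi_j\in\Lambda^2 F$. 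Finally $\wt Z_j\in\ti\p_+$ and $Z_j\in\p_+\subseteq\q\subseteq\ti\p$ both lie in $\ti\p$, so $\psi_j\in\Lambda^2 F\cap\ti\p=\Lambda^2\bar F$ by \eqref{useful}. Substituting $(Z_{i_j}-\wt Z_{i_j})\bullet v=-\psi_{i_j}\bullet v\in\Lambda^2\bar F\bullet V$ into the display and extending by linearity yields the first assertion.

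For the ``in particular'' I take $V=\g\subseteq\wt V=\ti\g$ with $\bullet$ the adjoint action and split everything according to $\ti\g=\g\oplus\g^{\perp}$. The inclusion $\wt{\partial^*\phi}$ is a pure $\g$-slot term, while the correction $D:=\wt{\partial^*\phi}-\wt\partial^*\wt\phi$ has action-slots $[\psi_{i_j},v]\in[\Lambda^2 F, E\otimes F]\subseteq\Lambda^2 F\subseteq\g^{\perp}$, the containment following from the $K$-eigenvalue grading \eqref{commK}. If $\wt\partial^*\wt\phi\in\Lambda^{k-1}\f\otimes\Lambda^2\bar F\subseteq\Lambda^{k-1}\f\otimes\g^{\perp}$, then its $\g$-slot $\wt{\partial^*\phi}$ vanishes and injectivity of $\phi\mapsto\wt\phi$ gives $\partial^*\phi=0$. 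Conversely, if $\partial^*\phi=0$ then $\wt\partial^*\wt\phi=-D$ has its action-slot in $\Lambda^2 F$; but, being a value of the homology differential with $\wt Z_{i_j}\in\ti\g_1$ and $v\in\g$, each action-slot $[\wt Z_{i_j},v]$ also lies in $\ti\g_0\oplus\ti\g_1=\ti\p$ (since $\ti\g_2=0$), whence it lies in $\Lambda^2 F\cap\ti\p=\Lambda^2\bar F$. This proves the equivalence.
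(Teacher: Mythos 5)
Your proof is correct and follows essentially the same route as the paper's: both reduce to decomposable elements, identify the difference $\wt{\partial^*\phi}-\wt{\partial}^*\wt{\phi}$ as a combination of terms $(Z_i-\wt{Z}_i)\bullet v$ with wedge factors in $\Lambda^{k-1}\f$, and obtain the ``in particular'' from the splitting $\ti\g=\g\oplus\g^{\perp}$ together with $\Lambda^2 F\cap\ti\p=\Lambda^2\bar{F}$. The only difference is in one step's justification: where the paper checks $\wt{Z}_i-Z_i\in\Lambda^2\bar{F}$ by inspecting the explicit matrix realisation \eqref{parabolic}, you derive it intrinsically from $\f=\ti\p_+\cap\ker s_F$, the decomposition \eqref{kers}, and the orthogonality $\wt{Z}_i-Z_i\in\g^{\perp}$ established in Proposition \ref{prop-norm} --- a valid and arguably more transparent alternative.
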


\begin{proof}For the sake of presentation, assume that $\phi$ is decomposable, i.e., of the form $\phi=Z_1\wedge\dots\wedge Z_k\otimes v$, where $Z_i\in\p_+$ and $v\in V$. Let us denote by the same symbols also the images of these elements under the inclusion $\g\embed\ti\g$ and $V\embed\wt{V}$, i.e., $Z_i\in\ti\p$ and $v\in\wt{V}$, respectively. Let $\wt{Z}_i\in \f$ be the images of $Z_i$ under the inclusion $\varphi\colon \p_+\to\ti\p_+$. Now, by def\/inition of the Kostant co-dif\/ferential, the dif\/ference $\wt{\partial^*\phi}-{\wt{\partial}}^*\wt{\phi}$
evaluated on any $k-1$ elements from~$\ti\g/\ti\p$ is a~linear combination of terms of the form
\begin{gather}\label{eqforJosef}
\big(Z_i - \wt Z_i\big)\bullet v.
\end{gather}
However, the dif\/ferences $Z_i-\wt Z_i\in\ti\p$ are represented by the matrices as in~\eqref{parabolic} in the Appendix where only the $Z$-entries are non-vanishing and hence contained in $\Lambda^2 F\cap \ti\p=\Lambda^2 \bar{F}$. Thus~\eqref{eqforJosef} belong to the image of $\bullet \colon \Lambda^2 \bar{F} \times V\to \wt{V}$ and the f\/irst claim follows.

For the second claim we use that $\La^2 \bar{F} \bullet \g = \big[\La^2 \bar{F},\g\big] \subseteq \Lambda^2 F$ and $\La^2 F\cap\g=0$: since $\wt{\partial^*\phi}$ (evaluated on any $k-1$ elements from $\ti\g/\ti\p$) has values in $\g\subset\ti\g$, vanishing of $\delstar\phi$ is equivalent to $\wt{\partial}^*\wt\phi$ having values in $\La^2 F$. But ${\wt{\partial}}^*\wt{\phi}$ has generally values in $\ti\p$ and $\Lambda^2 F\cap \ti\p=\Lambda^2 \bar{F}$, hence the claim follows.
\end{proof}

\begin{Lemma}\label{lem-hor} If $\psi\in \ti\p_+\wedge\f \t \La^2\bar{F} \subseteq \La^2\ti\p_+\t\ti\p$ then $\wt{\partial}^*\psi\in \f\t \f \subseteq\ti\p_+\t\ti\p_+$.
\end{Lemma}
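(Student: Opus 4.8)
The plan is to evaluate $\wt{\partial}^*\psi$ directly from the chain-level formula for the Kostant codifferential and then to read off the result using the bracket relations recorded in \eqref{useful}.

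First I would use that the conformal geometry $(\wt G,\wt P)$ is $|1|$-graded, so that the nilradical $\ti\p_+$ is abelian. Consequently, on $\La^2\ti\p_+\otimes\ti\g$ the codifferential loses the term involving the bracket of its two $\ti\p_+$-arguments, and on a decomposable chain $Z\wedge W\otimes v$ (with $Z,W\in\ti\p_+$ and $v\in\ti\g$) it reduces, up to an overall sign immaterial here, to
\[
  \wt{\partial}^*(Z\wedge W\otimes v)=W\otimes[Z,v]-Z\otimes[W,v],
\]
where $[\,\cdot\,,\,\cdot\,]$ denotes the adjoint action, since $\psi$ takes values in $\ti\p\subseteq\ti\g$. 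By bilinearity it then suffices to treat $\psi=Z\wedge W\otimes v$ with $Z\in\ti\p_+$, $W\in\f$ and $v\in\La^2\bar{F}$.

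The second step is a term-by-term inspection using \eqref{useful}. In the second summand $[W,v]\in[\f,\La^2\bar{F}]=0$, so it drops out. In the first summand $[Z,v]\in[\ti\p_+,\La^2\bar{F}]=\f$, while $W\in\f$ by hypothesis, whence $W\otimes[Z,v]\in\f\otimes\f$. Therefore $\wt{\partial}^*\psi=W\otimes[Z,v]\in\f\otimes\f\subseteq\ti\p_+\otimes\ti\p_+$, which is the assertion.

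I do not expect a genuine obstacle: the computation is short and its entire content is carried by the two identities $[\f,\La^2\bar{F}]=0$ and $[\ti\p_+,\La^2\bar{F}]=\f$ of \eqref{useful}. The only points requiring care are to justify discarding the bracket term $[Z,W]\in[\ti\p_+,\ti\p_+]=0$ afforded by the $|1|$-grading, and to apply the two relations to the correct summands, one relation annihilating the term in which $\f$ is paired with $\La^2\bar{F}$ and the other confining the surviving term to $\f\otimes\f$.
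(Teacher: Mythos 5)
Your proof is correct and follows essentially the same route as the paper: reduce to a decomposable element $Z\wedge W\otimes v$ with $Z\in\ti\p_+$, $W\in\f$, $v\in\La^2\bar F$, apply the codifferential formula (the bracket term $[Z,W]$ being absent by the $|1|$-grading), and conclude via the two relations $[\f,\La^2\bar F]=0$ and $[\ti\p_+,\La^2\bar F]=\f$ from \eqref{useful}. The only (immaterial) difference is an overall sign convention in the decomposable formula, which you already flag.
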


\begin{proof} $\psi$ is a sum of terms of the form $Z_1\wedge Z_2\otimes A$, where $Z_1\in\ti\p_+$, $Z_2\in\f$ and $A\in\La^2\bar{F}$. Applying the Kostant co-dif\/ferential gives
\begin{gather*}
\wt{\partial}^*(Z_1\wedge Z_2\otimes A)=Z_1\otimes [Z_2,A] - Z_2\otimes [Z_1,A].
\end{gather*}
Now $[Z_2,A]$ belongs to $\big[\f,\Lambda^2\bar{F}\big]=0$ and $[Z_1,A]$ belongs to $\big[\ti\p_+,\Lambda^2\bar{F}\big]=\f $, hence the claim follows.
\end{proof}

The following lemma contains the crucial information which is necessary to perform our normalisation. We are going to specify the curvature function $\wt\ka^{\rm ind}$ (later also $\wt\ka^{\rm nor}$) by describing its values along the natural $Q$-reduction $\G\embed\wt\G$ over~$\wt M$. Recall from Section~\ref{homogen} that $\Lambda^2\bar{F}$ is a~$Q$-invariant subspace in $\ti\g_0$, which can be identif\/ied with $(\Lambda^2f)[-2]$.

\begin{Lemma}\label{lem-psi} For any $u\in\G$, we have
\begin{gather*}
 \wt{\partial}^*\wt{\kappa}^{\rm ind}(u)\in \f\otimes\Lambda^2\bar{F} \subseteq\ti\p_+\t\ti\g_0.
\end{gather*}
Identifying $\Lambda^2\bar{F}\cong \big(\Lambda^2 f\big)[-2]$ and $\f\cong f[-2]$, we have in fact $\wt{\partial}^*\wt{\kappa}^{\rm ind}(u) \in \big(f \odot \Lambda^2 f\big) [-4] $, i.e., $\wt{\partial}^*\wt{\kappa}^{\rm ind}(u)$ is contained in the kernel of the alternation map
\begin{gather*}
\operatorname{alt}\colon \ \big(f\otimes\Lambda^2 f\big) [-4]\to\big(\Lambda^3 f\big)[-4].
\end{gather*}
\end{Lemma}
\begin{proof} It is a general assumption that $\wt\om^{\rm ind}$ is induced by a normal projective Cartan connection on~$\G$, i.e., $\delstar\ka(u)=0$, for any $u\in\G$. Hence it follows from Lemma~\ref{forJosef} that $\wt{\partial}^*\wt{\kappa}^{\rm ind}(u)$ belongs to $\f\otimes\Lambda^2\bar{F}\cong \big(f\otimes\Lambda^2 f\big)[-4]$.

Further we need a f\/iner discussion involving the properties of $\ka\colon \G\to\La^2\p_+\t\g$ to show that~$\ka(u)$ belongs to the kernel of the $Q$-equivariant map $\La^2\p_+\t\g\to\big(\La^3 f\big)[-4]$ given by
\begin{gather}\label{composition}
\phi\mapsto \operatorname{alt}\big(\wt{\partial}^*\wt{\phi}\big).
\end{gather}
Note that any element $\phi\in\La^2 \p_+\t\p_+$ for which ${\partial}^*\phi=0$ is mapped to zero: since $\wt\phi\in\La^2\ti\p_+\t\ti\p$ and $[\ti\p_+,\ti\p]=\ti\p_+$, the co-dif\/ferential $\wt\del^*\wt\phi$ has values in $\f\t\ti\p_+$. But, by Lemma~\ref{forJosef}, it also has values in $\f\t\La^2\bar{F}$ and $\ti\p_+\cap\La^2\bar{F}=0$.

Thus it suf\/f\/ices to consider the harmonic elements from $\La^2\p_+\t\g_0$, i.e., the ones corresponding to the projective Weyl tensor.
For that purpose we consider the simple part of $Q_0=Q\cap G_0$ which is isomorphic to $\SL(n-1)$, cf.\ the matrix realisation~\eqref{q0} in the Appendix where it corresponds to the $A$-block. Considering both $\La^2\p_+\t\g_0\cong\La^2{\rr^n}^*\t{\rr^n}^*\t\rr^n$ and $\big(\La^3 f\big)[-4]\cong\La^3{\rr^n}^*$ as representations of~$\SL(n-1)$, the map~\eqref{composition} is either trivial or an isomorphism on each $\SL(n-1)$-irreducible component.

One can check that there is only one $\SL(n-1)$-irreducible component that occurs in both spaces, and it is isomorphic to $\Lambda^2{\rr^{n-1}}^*$.
Hence it suf\/f\/ices to compute $\eqref{composition}$ on one element contained in such component. Let $X_n\in\g_{-}$ and $Z_n\in\p_{+}$ be the two dual basis vectors stabilised by $\SL(n-1)$ and consider an element
\begin{gather*}
\phi= Z_1\wedge Z_2\otimes X_n\otimes Z_n-Z_1\wedge Z_2\otimes X_1\otimes Z_1+Z_n\wedge Z_2\otimes X_n\otimes Z_1.
\end{gather*}
Indeed $\phi$ is completely trace-free, satisf\/ies the algebraic Bianchi identity and the $\SL(n-1)$-orbit of $\phi$ is isomorphic to $\Lambda^2{\rr^{n-1}}^*$. Now,
\begin{gather*}
\wt\del^*\wt\phi= -\widetilde{Z}_1\otimes\widetilde{Z}_n\wedge\widetilde{Z}_2-\widetilde{Z}_n\otimes\widetilde{Z}_1\wedge\widetilde{Z}_2,
\end{gather*}
which indeed lies in the kernel of the alternation map. Hence the statement follows.
\end{proof}

We can now determine the form of the normal conformal Cartan connection:
\begin{Proposition}\label{prop-normal} The normal conformal Cartan connection is of the form
\begin{gather*}
 \wt{\om}^{\rm nor}=\wt{\om}^{\rm ind}+\Psi^1+\Psi^2,
\end{gather*}
where $\Psi^1=-\frac{1}{2}\wt{\del}^*\wt{\ka}^{\rm ind}\in\Omega^1_{\rm hor}\big(\wt\G,\ti\p\big)$ and $\Psi^2\in\Omega^1_{\rm hor}\big(\wt\G,\ti\p_+\big)$.
Furthermore, along the reduction $\G \embed \wt\G$ we have $\Psi^1\in\Om^1_{\rm hor}\big(\G,\La^2\bar{F}\big)$, $\Psi^2\in\Om^1_{\rm hor}(\G,\f)$.
\end{Proposition}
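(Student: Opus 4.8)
The plan is to run the general two-step normalisation procedure of \cite[Section 3.1.13]{cap-slovak-book}. Since the conformal algebra is $|1|$-graded, $\ti\p=\ti\g_0\oplus\ti\p_+$, the codifferential $\wt{\partial}^*\wt\ka$ of any conformal curvature function splits into a part of homogeneity one (values in $\ti\g_0$) and a part of homogeneity two (values in $\ti\p_+$), so normality is reached in exactly two steps. At each step the modifying term is $-\wt\lapl^{-1}$ applied to the relevant homogeneous component of the codifferential of the current curvature, extended $\wt P$-equivariantly off the reduction $\G\embed\wt\G$. Thus the assertions $\Psi^1=-\tfrac12\wt{\partial}^*\wt\ka^{ind}$ and $\Psi^2\in\Om^1_{hor}(\wt\G,\ti\p_+)$ amount to identifying these two corrections, and the refinements along $\G$ amount to locating their values inside $\La^2\bar{F}$ and $\f$ respectively.

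\textbf{First step.} Here I would use Lemma \ref{lem-psi}, by which the whole of $\wt{\partial}^*\wt\ka^{ind}$ takes values in $\f\t\La^2\bar{F}\subseteq\ti\p_+\t\ti\g_0$; in particular it is purely of homogeneity one, so that $\Psi^1=-\wt\lapl^{-1}\wt{\partial}^*\wt\ka^{ind}$ is automatically $\ti\p$-valued and lies in $\La^2\bar{F}$ along $\G$. It then remains to evaluate the Kostant Laplacian on this component. Because $[\f,\La^2\bar{F}]=0$ by \eqref{useful}, the element $\wt{\partial}^*\wt\ka^{ind}$ is $\wt{\partial}^*$-closed, so $\wt\lapl=\wt{\partial}^*\wt{\partial}$ on it, and $\wt\lapl$ acts by a scalar on each $\ti\g_0$-isotypic piece. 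By Lemma \ref{lem-psi} the only piece occurring is the single $\SL(n-1)$-type $\La^2{\rr^{n-1}}^*$, so it suffices to apply $\wt\lapl=\wt{\partial}^*\wt{\partial}$ to the explicit element $\wt{\partial}^*\wt\phi$ produced in the proof of Lemma \ref{lem-psi} (the image of the representative \eqref{element}); with the brackets \eqref{useful} this yields the eigenvalue $2$, whence $\Psi^1=-\tfrac12\wt{\partial}^*\wt\ka^{ind}$. This eigenvalue is the one genuinely computational point: the rest is bookkeeping by homogeneity, but the coefficient $-\tfrac12$ stands or falls with the value $2$, which has to be extracted by a concrete calculation in $\so(n+1,n+1)$.

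\textbf{Second step.} Write $\wt\ka^1$ for the curvature of $\wt\om^1:=\wt\om^{ind}+\Psi^1$. The homogeneity-one part of $\wt{\partial}^*\wt\ka^1$ vanishes by construction (it equals $\wt{\partial}^*\wt\ka^{ind}+\wt\lapl\Psi^1=0$), so only the homogeneity-two part survives and $\Psi^2=-\wt\lapl^{-1}\wt{\partial}^*\wt\ka^1$ is $\ti\p_+$-valued and horizontal, giving $\Psi^2\in\Om^1_{hor}(\wt\G,\ti\p_+)$. To locate its values in $\f$ along $\G$, I would track the change of curvature: since $\Psi^1$ is valued in the abelian subalgebra $\La^2\bar{F}$ the quadratic term drops out, and the algebraic part $\wt{\partial}\Psi^1$ is of homogeneity one, so the homogeneity-two part of $\wt{\partial}^*\wt\ka^1$ equals $\wt{\partial}^*$ of the two-form part of the covariant-derivative contribution $\na\Psi^1$. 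Along $\G$ this two-form lies in $\ti\p_+\wedge\f\t\La^2\bar{F}$, because the covariant derivative adds one form leg in $\ti\p_+\cong(\ti\g/\ti\p)^*$ while the existing leg of $\Psi^1$ sits in $\f$. Lemma \ref{lem-hor} then gives $\wt{\partial}^*\wt\ka^1\in\f\t\f$, so $\Psi^2$ takes values in $\f$ along $\G$. As $\ti\p$ carries nothing of homogeneity exceeding two, no further corrections arise and $\wt\om^{nor}=\wt\om^{ind}+\Psi^1+\Psi^2$ is the normal connection.
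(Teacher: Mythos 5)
Your proposal is correct and follows essentially the same route as the paper's proof: a two-step normalisation in which Lemma \ref{lem-psi} locates $\wt{\del}^*\wt{\ka}^{ind}$ in $(f\odot\La^2 f)[-4]$ where $\wt{\lapl}$ acts by $2$, and the leftover curvature contribution of the first modification is placed in $\ti\p_+\wedge\f\otimes\La^2\bar F$ so that Lemma \ref{lem-hor} forces $\wt{\del}^*\wt{\ka}^1\in\f\otimes\f$. The only cosmetic difference is that the paper justifies the membership in $\ti\p_+\wedge\f\otimes\La^2\bar F$ by checking that the residual terms (including $\phi^1([X,Y])$) vanish upon insertion of two elements of $\p$, which is slightly cleaner than your "form legs" phrasing but amounts to the same argument.
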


\begin{Remark}Since $\Psi^1$ and $\Psi^2$ are horizontal, they may equivalently be regarded as bundle-valued $1$-forms on~$\wt{M}$. Denoting by $\La^2 \wt{\overline{\mc{F}}}$ the associated bundle $\G\times_Q\La^2\bar{F}$ over~$\wt{M}$ and by $\wtf\subseteq T^*\wt{M}$ the annihilator of $\wt{f}=\ker\chi\subseteq T\wt{M}$, Proposition~\ref{prop-normal} says
\begin{gather*}
 \Psi^1\in\Om^1\big(\wt{M},\La^2 \wt{\overline{\mc{F}}}\big), \qquad \Psi^2\in\Om^1\big(\wt{M},\wtf\big), \qquad \Psi^1(v)=\Psi^2(v)=0, \qquad \text{for all $v\in\Ga(\ker \chi)$}.
\end{gather*}
Below we also use the corresponding frame forms, i.e., the $\wt{P}$-equivariant functions $\phi^1\colon \wt\G\goesto (\ti\g/\ti\p)^*\otimes\ti\p$ and $\phi^2\colon \wt\G\goesto (\ti\g/\ti\p)^*\otimes\ti\p_+$ such that, for any $u\in\wt\G$, $\Psi^1=\phi^1(u)\circ\wt{\om}^{\rm ind}$ and $\Psi^2=\phi^2(u)\circ\wt{\om}^{\rm ind}$. In these terms, the proposition means that along the reduction $\G\embed\wt{\G}$ these maps restrict to $Q$-equivariant functions
\begin{gather*}
\phi^1\colon \ \G\goesto \f\t\La^2\bar{F},\qquad \phi^2\colon \ \G\goesto \f\t\f.
\end{gather*}
Further we put $\Psi=\Psi^1+\Psi^2$ and $\phi=\phi^1+\phi^2$.
\end{Remark}

\begin{proof} The Kostant Laplacian $\wt\lapl$ restricts to an invertible endomorphism of $((\ti\g/\ti\p)^*\t\ti\g_0)\cap\im\wt\partial^*$ that acts by scalar multiplication on each of the $\wt G_0$-irreducible components. Now, restricting to $\G\embed\wt\G$ and suppressing all arguments $u\in\G$, it was shown in Lemma~\ref{lem-psi} that $\wt{\partial}^*\wt{\kappa}^{\rm ind}$ is contained in one of the irreducible components, namely in
$\big(f \odot \Lambda^2 f\big) [-4]$. On this component~$\wt\lapl$ acts by multiplication by~$2$. Thus, the modif\/ication map accomplishing the f\/irst normalisation step is
\begin{gather*}
\phi^1\colon \ \G\goesto \f\t\La^2\bar{F},\qquad \phi^1:=-\frac{1}{2} \wt{\partial}^*\wt{\kappa}^{\rm ind}=-\wt\lapl^{-1}\wt{\partial}^*\wt\ka^{\rm ind}.
\end{gather*}

Now, let $\wt{\om}^1:=\wt{\om}^{\rm ind}+\phi^1\circ\wt{\om}^{\rm ind}$ be the modif\/ied Cartan connection. The corresponding curvature function $\wt\ka^1$ can be expressed in terms of $\wt\ka^{\rm ind}$, $\phi^1$ and its dif\/ferential $d\phi^1$ so that
\begin{gather}
 \wt{\ka}^{1}(X,Y) =\wt{\ka}^{\rm ind}(X,Y)+\big[X,\phi^1(Y)\big]-\big[Y,\phi^1(X)\big]\notag \\
\hphantom{\wt{\ka}^{1}(X,Y) =}{} +d\phi^1(\xi)(Y)-d\phi^1(\eta)(X)-\phi^1([X,Y])+\big[\phi^1(X),\phi^1(Y)\big],\label{eqcc1}
\end{gather}
where $X,Y\in\g$ and $\xi=\big(\wt{\om}^{\rm ind}\big)^{-1}(X)$, $\eta=\big(\wt{\om}^{\rm ind}\big)^{-1}(Y)$, cf.\ \cite[formula~(3.1)]{cap-slovak-book}. For the last term we have $\big[\phi^1(X),\phi^1(Y)\big]=0$ since $\phi^1(X)$ has values in $\La^2\bar{F}$. The f\/irst three terms are
\begin{gather*}
\wt{\ka}^{\rm ind}(X,Y)+\big[X,\phi^1(Y)\big]-\big[Y,\phi^1(X)\big]=\wt{\ka}^{\rm ind}(X,Y)+\wt{\del}\phi^1(X,Y),
\end{gather*}
which by construction vanishes upon application of the Kostant co-dif\/ferential, i.e., $\wt{\del}^*\bigl(\wt{\ka}^{\rm ind}+\wt{\del}\phi^1\bigr)=0$. The remaining terms in~\eqref{eqcc1} can be combined into a~map $\La^2\g\goesto \La^2\bar{F}$,
\begin{gather*}
 (X,Y) \mapsto d\phi^1(\xi)(Y)-d\phi^1(\eta)(X)-\phi^1([X,Y]),
\end{gather*}
which vanishes upon insertion of two elements $X,Y\in \p$. Therefore, applying Lemma~\ref{lem-hor}, we conclude that $\wt{\del}^*\wt{\ka}^1$ has values in~$\f\t\f$. Thus the second modif\/ication map is
\begin{gather*}
 \phi^2\colon \ \G\goesto\f\t\f,\qquad \phi^2:=-\wt\lapl^{-1} \wt{\del}^*\wt{\ka}^1.
\tag*{\qed}
\end{gather*}
\renewcommand{\qed}{}
\end{proof}

\subsection{Properties}
The information provided in the previous proposition allows us to determine the properties satisf\/ied by the normal conformal Cartan curvature:
\begin{Proposition} The normal conformal Cartan curvature $\wt{\ka}^{\rm nor}$ restricts to a map
\begin{gather}\label{eq-curv1}
 \wt{\ka}^{\rm nor}\colon \ \G\goesto\La^2(\ti{\g}/\ti{\p})^*\t\big(\sl(n+1)\oplus \La^2 F\big).\end{gather}
Moreover, the following integrability condition holds:
\begin{gather} \label{integrability-cond}
 i_X\wt{\ka}^{\rm nor}(u)\in \f\t \big(\La^2\bar{F}\oplus \f\big), \qquad \text{for all $X\in f$, $u\in\G$}.
\end{gather}
\end{Proposition}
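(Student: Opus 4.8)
The plan is to prove the two assertions separately, handling \eqref{eq-curv1} by a parallel-spinor argument and \eqref{integrability-cond} by inserting $X\in f$ into the change-of-curvature formula \eqref{eqcc1}. For \eqref{eq-curv1} I would first show that $\mb{s}_F$ stays parallel for the \emph{normal} tractor connection. By Proposition \ref{prop-normal} the normalisation differs from $\wt\om^{ind}$ by $\Psi=\Psi^1+\Psi^2$, whose values along $\G$ lie in $\La^2\bar{F}\oplus\f$. Since $\La^2\bar{F}\subseteq\La^2 F$ and $\f=\ti\p_+\cap\ker s_F\subseteq\ker s_F$, the whole modification takes values in $\ker s_F=\sl(n+1)\oplus\La^2 F$ (see \eqref{kers}), which by definition annihilates $s_F$ under $\bullet$. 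As $\mb{s}_F$ is $\wt\na^{ind}$-parallel (Proposition \ref{prop-objects}), we get $\wt\na^{nor}\mb{s}_F=\wt\na^{ind}\mb{s}_F+\Psi\bullet\mb{s}_F=0$, so $\mb{s}_F$ is $\wt\na^{nor}$-parallel. Consequently $\wt\ka^{nor}\bullet\mb{s}_F=0$, i.e. along $\G$ the curvature has values in $\ker s_F=\sl(n+1)\oplus\La^2 F$, which is exactly \eqref{eq-curv1}.

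For \eqref{integrability-cond} I would assemble three inputs. First, since along $\G$ the induced curvature is $\wt\ka^{ind}=(\La^2\varphi\otimes i')\circ\ka$ by \eqref{katilde}, it has values in $\La^2\f\otimes\g$; because $\f$ is the annihilator of $f$, inserting any $X\in f$ gives $i_X\wt\ka^{ind}=0$. Second, by the Remark following Proposition \ref{prop-normal} the frame form $\phi=\phi^1+\phi^2$ maps $\G$ into $\f\otimes(\La^2\bar{F}\oplus\f)$; in particular $\phi(X)=0$ for $X\in f$ (form part in $\f=\mathrm{Ann}(f)$) and $\phi$ is valued in $\La^2\bar{F}\oplus\f$. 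Third, using $\ti\g/\ti\p\cong\g/\q$ I may always represent both curvature arguments by elements of $\g$, so that the flows $\xi=(\wt\om^{ind})^{-1}(X)$ and $\eta=(\wt\om^{ind})^{-1}(Y)$ are tangent to $\G$ (recall $j^*\wt\om^{ind}=i'\circ\om$ from \eqref{omtilde}).

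Now I would substitute $X\in f$ into the analogue of \eqref{eqcc1} written for the combined $\phi$. The quadratic term vanishes identically: $\La^2 F$ is an abelian subalgebra, while $[\f,\La^2\bar{F}]=0$ and $[\f,\f]=0$ (as $\f\subseteq\ti\p_+$ and $\ti\p_+$ is abelian), see \eqref{useful}. With $\phi(X)=0$ this leaves
\begin{align*}
\wt\ka^{nor}(X,Y)=[X,\phi(Y)]-\phi([X,Y])+d\phi(\xi)(Y)-d\phi(\eta)(X).
\end{align*}
Here $d\phi(\eta)(X)=0$ because the function $u\mapsto\phi(u)(X)$ vanishes identically on $\G$ and $\eta$ is tangent to $\G$; the term $\phi([X,Y])$ is a value of $\phi$, hence lies in $\La^2\bar{F}\oplus\f$; the derivative $d\phi(\xi)(Y)$ stays in the fixed linear subspace $\La^2\bar{F}\oplus\f$; and $[X,\phi(Y)]$ lands in $\La^2\bar{F}\oplus\f$ by $[\p,\La^2\bar{F}]\subseteq\La^2\bar{F}$ and $[\p,\f]\subseteq\f$ (the former from $\g$ preserving $F$ together with $\La^2 F\cap\ti\p=\La^2\bar{F}$, the latter from $[\g,\ker s_F]\subseteq\ker s_F$, $[\p,\ti\p_+]\subseteq\ti\p_+$, and $\f=\ti\p_+\cap\ker s_F$). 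This gives the value part of \eqref{integrability-cond}. For the form part I would additionally take $Y\in f$: then $\phi(Y)=0$, $[X,Y]\in[\p,\p]\subseteq\p$ projects into $f$ so $\phi([X,Y])=0$, and both derivative terms vanish as above, whence $\wt\ka^{nor}(X,Y)=0$ for all $X,Y\in f$. Thus $i_X\wt\ka^{nor}$ annihilates $f$ and has form part in $\f$, and combining the two parts yields $i_X\wt\ka^{nor}(u)\in\f\otimes(\La^2\bar{F}\oplus\f)$.

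The main obstacle I expect is the careful handling of the two derivative terms together with the two competing identifications. Because $\phi$ is only controlled \emph{along} the reduction $\G\embed\wt\G$, one must ensure that the differentiation directions $\xi,\eta$ in \eqref{eqcc1} are genuinely tangent to $\G$; this is what forces the use of representatives of $\ti\g/\ti\p\cong\g/\q$ inside $\g$ and the identity $j^*\wt\om^{ind}=i'\circ\om$. Once this is secured, the vanishing $d\phi(\eta)(X)=0$ and the stability of $\La^2\bar{F}\oplus\f$ under the relevant brackets are purely algebraic consequences of \eqref{kers} and \eqref{useful}, so the remaining bookkeeping is routine.
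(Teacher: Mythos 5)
Your overall strategy is sound and, for the integrability condition \eqref{integrability-cond}, coincides with the paper's: both arguments insert $X\in f$ into the change-of-curvature formula \eqref{eqcc1} (written for the full modification $\phi=\phi^1+\phi^2$) and check the surviving terms one by one, using that $\phi$ annihilates $f$, takes values in the fixed subspace $\La^2\bar{F}\oplus\f$, and that the flows $\xi,\eta$ are tangent to $\G$. For \eqref{eq-curv1} you take a slightly different and arguably cleaner route: rather than verifying term by term that each summand of the modified curvature lies in $\sl(n+1)\oplus\La^2 F$, you first deduce $\wt{\na}^{nor}\mb{s}_F=0$ from the fact that $\Psi^1,\Psi^2$ take values in $\ker s_F$, and then read \eqref{eq-curv1} off as the statement $\wt{\ka}^{nor}\bullet s_F=0$ together with \eqref{kers}. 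This is exactly the content of Proposition \ref{prop-spinor}, which the paper proves (by your computation) only after the present statement; since that proof uses nothing beyond Propositions \ref{prop-normal} and \ref{prop-objects}, there is no circularity and your ordering is legitimate.

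There is, however, one incorrect algebraic claim in your treatment of the term $[X,\phi(Y)]$: the inclusion $[\p,\f]\subseteq\f$ is false, and so is the claim $[\p,\ti{\p}_+]\subseteq\ti{\p}_+$ from which you derive it. Since $\p\not\subseteq\ti{\p}$, there is no reason for the nilradical $\ti{\p}_+$ to be stable under $\mathrm{ad}(\p)$, and indeed it is not: in the matrix realisation of Appendix \ref{matrices}, for $A\in\p\setminus\q$ and $Z\in\f$ the bracket $[i'(A),Z]$ acquires a component in $\La^2\bar{F}\subseteq\ti{\g}_0$ (hence leaves $\ti{\p}_+$), and its $E\otimes F$- and $\La^2 F$-parts are no longer correlated in the way that characterises elements of $\f$; only for $A\in\q$ does one land back in $\f$. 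What is true --- and all that you actually need --- is the weaker inclusion $[\p,\La^2\bar{F}\oplus\f]\subseteq\La^2\bar{F}\oplus\f$ asserted in the paper. It follows from $[\p,\La^2\bar{F}]\subseteq\La^2\bar{F}$ (because $\p$ preserves the line through $e_1$ and $\La^2\bar{F}$ consists of the elements of $\La^2 F$ annihilating $e_1$) together with $[\p,\f]\subseteq i'(\p_+)+\La^2\bar{F}\subseteq \f+\La^2\bar{F}$, the last inclusion using that $i'(Z)$ and the corresponding element $\varphi(Z)\in\f$ differ by an element of $\La^2\bar{F}$. With this local repair the remainder of your argument --- the vanishing of the quadratic term, of $d\phi(\eta)(X)$, and of all terms when both arguments lie in $f$ --- goes through as written.
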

\begin{proof} Let $\wt{\ka}^{\rm nor}$ be the curvature function of the normal Cartan connection $\wt\om^{\rm nor}=\wt{\om}^{\rm ind}+ \phi\circ\wt{\om}^{\rm ind}$, where $\phi=\phi^1+\phi^2$. With the same conventions as in the proof of Proposition \ref{prop-normal}, \cite[formula~(3.1)]{cap-slovak-book} yields
\begin{gather*}
 \wt{\ka}^{\rm nor}(X,Y) =\wt{\ka}^{\rm ind}(X,Y)+[X,\phi(Y)]-[Y,\phi(X)] \\
\hphantom{\wt{\ka}^{\rm nor}(X,Y) =}{} +d\phi(\xi)(Y)-d\phi(\eta)(X)-\phi([X,Y])+[\phi(X),\phi(Y)].
\end{gather*}
Clearly, $\wt{\ka}^{\rm ind}(X,Y)$ has values in $\sl(n+1)$ and vanishes upon insertion of $X\in\p$. A~term of the form $[X,\phi(Y)]$ vanishes if $Y\in\p$ and has values in $\big[\p,\La^2\bar{F}\oplus \f\big]\subseteq \La^2\bar{F}\oplus\f$ for $X\in\p$. A~term of the form $d\phi(\xi)(Y)$ has values in $\La^2\bar{F}\oplus \f$ and vanishes for $Y\in \p$. The term $\phi([X,Y])$ has values in $\La^2\bar{F}\oplus \f$ and vanishes for $X,Y\in \p$. The last term $[\phi(X),\phi(Y)]$ vanishes for all $X,Y\in\g$ since $\phi(X)$ has values in $\La^2\bar{F}\oplus\f$. Altogether, we obtain~\eqref{eq-curv1} and~\eqref{integrability-cond}.
\end{proof}

We observe here that it follows directly from \eqref{eq-curv1} that the pairing of $\widetilde{\ka}^{\rm nor}$ with the involution~$K$ vanishes,
$\langle \widetilde{\ka}^{\rm nor}, K \rangle =0$.

To derive properties of induced tractorial and underlying objects on the conformal structure we will need the following preparatory lemma.
\begin{Lemma}\label{lem-cor} Let $V$ be a $\wt{G}$-representation and $v\in V$ an element which is stabilised under $G\subseteq \wt{G}$. Let $\mb{v}\in\Ga\big(\wt{\mc{V}}\big)$ be the section of the associated tractor bundle $\wt{\G}\times_{\wt{P}}V$ corresponding to the constant function $\G\goesto V$, $u\mapsto v$, along~$\G$. Then the covariant derivative $\wt{\na}^{\rm nor} \mb{v}$ corresponds to the $Q$-equivariant function
 \begin{gather*}
\G\goesto \f \t V, \qquad u\mapsto \phi^1(u)\bdot v+\phi^2(u)\bdot v.
 \end{gather*}
\end{Lemma}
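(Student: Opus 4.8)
The plan is to compute $\wt{\na}^{nor}\mb{v}$ directly from the frame-function description of tractor connections, exploiting that $\mb{v}$ arises from a \emph{constant} function and that $v$ is annihilated by $\g$. First I would recall the standard formula: if a section $s$ of $\wt{\mc{V}}=\G\times_Q V$ corresponds to the $Q$-equivariant function $f\colon\G\goesto V$, then for $\xi\in\X(\wt{M})$ lifted to a field $\tilde{\xi}$ \emph{tangent to} $\G$ (possible since $\G\goesto\wt{M}$ is a $Q$-principal bundle), the covariant derivative $\wt{\na}^{nor}_{\xi}s$ corresponds to $\tilde{\xi}(f)+\wt{\om}^{nor}(\tilde{\xi})\bdot f$, where the $\ti\g$-value $\wt{\om}^{nor}(\tilde{\xi})$ acts through the representation. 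For $s=\mb{v}$ the function is $f\equiv v$, so the derivative term $\tilde{\xi}(f)$ vanishes identically and only the algebraic term survives.

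Next I would split $\wt{\om}^{nor}=\wt{\om}^{ind}+\Psi$ with $\Psi=\Psi^1+\Psi^2=\phi\circ\wt{\om}^{ind}$, as established in Proposition \ref{prop-normal}. Along the reduction $\G\embed\wt{\G}$ the defining relation \eqref{omtilde} gives $\wt{\om}^{ind}(\tilde{\xi})=i'(\om(\tilde{\xi}))\in\g\subseteq\ti\g$ for $\tilde{\xi}$ tangent to $\G$. The crucial point is then that $v$ is stabilised by $G$: differentiating $g\cdot v=v$ yields $\g\bdot v=0$, so $\wt{\om}^{ind}(\tilde{\xi})\bdot v=0$. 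Hence the entire contribution of the induced connection drops out and one is left with $\Psi(\tilde{\xi})\bdot v=\phi(u)\big(\wt{\om}^{ind}(\tilde{\xi})\big)\bdot v$.

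Finally I would unwind this as a frame function. The object $\wt{\na}^{nor}\mb{v}\in\Ga(T^*\wt{M}\otimes\wt{\mc{V}})$ corresponds to a $Q$-equivariant map $\G\goesto(\ti\g/\ti\p)^*\t V$; its value at $u$ on $X\in\ti\g/\ti\p\cong\g/\q$ is $\phi(u)(X)\bdot v$. Viewing $\phi(u)=\phi^1(u)+\phi^2(u)$ with $\phi^1(u)\in\f\t\La^2\bar{F}$ and $\phi^2(u)\in\f\t\f$ (Proposition \ref{prop-normal}) and contracting $\bdot v$ on the $\ti\p$-slot, the resulting function is $u\mapsto\phi^1(u)\bdot v+\phi^2(u)\bdot v$, which automatically takes values in $\f\t V$ because both $\phi^1(u)$ and $\phi^2(u)$ have their first tensor factor in $\f\subseteq\ti\p_+\cong(\ti\g/\ti\p)^*$. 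This is exactly the claimed formula.

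The step I would be most careful about — and the only genuine subtlety — is the bookkeeping between the two descriptions $\wt{\mc{V}}=\wt{\G}\times_{\wt{P}}V=\G\times_Q V$: one must take the lift $\tilde{\xi}$ tangent to $\G$ (not to all of $\wt{\G}$), use that the normal Cartan connection restricted to $T\G$ equals $i'\circ\om+\Psi$, and verify that the frame-function value lands in $\f$ rather than in a larger subspace of $(\ti\g/\ti\p)^*$. All of this is already governed by the horizontality of $\Psi$ and the explicit targets of $\phi^1,\phi^2$ recorded in Proposition \ref{prop-normal}, so no new computation is required beyond invoking $\g\bdot v=0$.
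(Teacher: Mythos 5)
Your argument is correct and follows essentially the same route as the paper's proof: compute the covariant derivative via the frame-function formula along the reduction $\G\embed\wt{\G}$, note that the directional-derivative term vanishes because the function is constant, use $\g\bdot v=0$ to kill the $\wt{\om}^{ind}$-contribution, and read off the remainder as $\phi^1(u)\bdot v+\phi^2(u)\bdot v$ with values in $\f\t V$ by Proposition \ref{prop-normal}. Your write-up is in fact somewhat more explicit about the lift tangent to $\G$ and the identification $\wt{\mc{V}}=\G\times_Q V$ than the paper's terse version, but no new idea is involved.
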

\begin{proof} The covariant derivative $\wt{\na}^{\rm nor}\mb{v}$ corresponds to the map
 \begin{gather}\label{equspder}
X\in{\g} \mapsto \big(\wt{\om}^{\rm nor}\big)^{-1}(X)\cdot v+X\bdot v.
 \end{gather}
The f\/irst term in \eqref{equspder} vanishes since it is the directional derivative of the constant function $v$. Now $\wt{\om}^{\rm nor}=\wt{\om}^{\rm ind}+\phi^1+\phi^2$, and since $X \bdot v=0$ the claim follows.
\end{proof}

We now show that the distinguished tractors $\mb{s}_E$, $\mb{s}_F$ and~$\mb{K}$ on the Fef\/ferman space are all given as BGG-splittings from their underlying objects. Moreover, several stronger properties hold:

\begin{Proposition}\label{killing} Let $\mb{s}_{E}\in\Ga\big(\wt{\mc{S}}_{\pm}\big)$, $\mb{s}_{F}\in\Ga\big(\wt{\mc{S}}_{-}\big)$ and $\mb{K}\in\Ga\big(\mc{A}\wt{M}\big)$ be the tractor spinors and the adjoint tractor, respectively, and let $\eta=\Pi_0^{\wt{\mc{S}}}(\mb{s}_E)$, $\chi=\Pi_0^{\wt{\mc{S}}}(\mb{s}_F)$ and $k=\Pi_0^{\mc{A}\wt{M}}(\mb{K})$ be the corresponding underlying objects as in Proposition~{\rm \ref{prop-objects}}.
\begin{enumerate}\itemsep=0pt
\item[$(a)$] The tractor spinor $\mb{s}_F$ is parallel, i.e., $\wt{\na}^{\rm nor}\mb{s}_F=0$. In particular, $\chi$ is a pure twistor spinor, $\mb{s}_F = L_0^{\wt{\mc{S}}_{-}}(\chi)$ and $\Hol(\mb{c})\subseteq\SL(n+1)\ltimes {\Lambda}^{2}\big(\mathbb{R}^{n+1}\big)^{*}\subseteq \Spin(n+1,n+1)$.
\item[$(b)$] The tractor spinor $\mb{s}_E$ is a BGG-splitting, i.e., $\wt{\del}^*\bigl(\wt{\na}^{\rm nor}\mb{s}_E\bigr)=0$ and $\mb{s}_E = L_0^{\wt{\mc{S}}_{\pm}}(\eta)$.
\item[$(c)$] The adjoint tractor $\mb{K}$ is a BGG-splitting and $k$ is a conformal Killing field, i.e., $\wt{\partial}^* \wt{\na}^{\rm nor}\mb{K}$ $=0$, $\mb{K} = L_0^{\mc{A}\wt{M}}(k)$ and $\wt{\na}^{\rm nor} \mb{K}= \io_k \wt{\Om}^{\rm nor}$. Moreover, we have
\begin{gather}\label{Psi_proj}
\io_k \wt{\ka}^{\rm nor} = 2 \phi_{\La^2 F}.
\end{gather}
\end{enumerate}
\end{Proposition}

\begin{proof} (a) Since $\phi^1$, $\phi^2$ have values in $\ker s_F$ we have $\phi^1\bdot s_F+\phi^2\bdot s_F=0$. Thus, according to Lemma~\ref{lem-cor}, we have $\wt{\na}^{\rm nor}\mb{s}_F=0$ and the rest is obvious.

(b)~The spinor $s_E$ is of the form $s_E = \smat{ \ast \\ \eta }$. According to Lemmas~\ref{lem-psi} and~\ref{lem-cor}, $\phi^1$ has values in $\big(f\odot \La^2 f\big)[-4]$ and $\wt{\del}^*\big(\wt{\na}^{\rm nor} \mb{s}_E\big)$ corresponds to
\begin{gather*}
\wt{\partial}^*\bigl( \phi^1 \bullet s_E\bigr)
= \begin{pmatrix}
\bigl(\phi^1 \bullet \eta\bigr)_{\wt{\Sigma}_\mp[-\frac12]} \\ 0
\end{pmatrix}.
\end{gather*}
The projection $\bigl(\phi^1 \bullet \eta\bigr)_{\wt{\Sigma}_\mp[-\frac12]}$ can be realised as the full (triple) Clif\/ford action on~$\phi^1(u)\in \big(\bigotimes^3 f\big)[-4]$, where $u\in\mc{\G}$. Now it is easy to see that this action must vanish for a~$\phi^1(u)\in \big(f \odot \Lambda^2 f\big) [-4]$: We realise $\phi^1(u)$ equivalently in $\big(S^2 f \otimes f\big)[-4]$ by symmetrisation in the f\/irst two slots, then the complete Clif\/ford action on $\eta$ vanishes because the action of the f\/irst two slots is just a~(trivial) trace multiplication.

(c) According to Lemma~\ref{lem-cor}, $\wt{\na}^{\rm nor}\mb{K}$ corresponds to $\phi^1\bdot K+\phi^2\bdot K$. Since $K/\ti{\p}=k\in f$, the previous element lies in $\ti{\p}$. In particular, $\wt{\na}^{\rm nor}\mb{K}$ has trivial projecting slot, and thus $k=\Pi_0(\mb{K})$ is a conformal Killing f\/ield. Since $\phi^2\bdot K\in\ti{\p}_+$, we have that $\wt{\del}^*\bigl(\wt{\na}^{\rm nor}\mb{K}\bigr)$ corresponds to $\wt{\del}^*\bigl(\phi^1\bdot K\bigr)$. Now $\phi^1\bdot K=-K\bdot \phi^1=2\phi^1$, since $K$ acts by multiplication with~$-2$ on~$\La^2{\bar{F}}$. But $\phi^1\in\im\wt{\del}^*\subseteq\ker\wt{\del}^*$, and the expression $\wt{\del}^*\bigl(\phi^1\bdot K\bigr)$ therefore vanishes. The equality $\wt{\na}^{\rm nor} \mb{K}= \io_k \wt{\Om}^{\rm nor}$ is just \eqref{cKf} for the conformal Killing f\/ield $k$ with its BGG-splitting~$\mb{K}$. In terms of the $Q$-equivariant functions $\phi=\phi^1+\phi^2$ and $\wt{\ka}^{\rm nor}$ along $\G\embed\wt{\G}$, this can be expressed as $\phi\bdot K=i_k\wt{\ka}^{\rm nor}$, which yields~\eqref{Psi_proj}.
\end{proof}

We now collect the essential information about the induced conformal structure $(\wt{M},\mb{c})$ which we derived:

\begin{Proposition}\label{prop-properties} Let $\big(\wt{M},\mb{c}\big)$ be the conformal spin structure induced from an oriented projective structure $(M,\mb{p})$ via the Fefferman-type construction. Then the following properties are satisfied:
 \begin{enumerate}\itemsep=0pt
 \item[$(a)$] $\big(\wt{M},\mb{c}\big)$ admits a nowhere-vanishing light-like conformal Killing field $k$ such that the corresponding tractor endomorphism $\mb{K}=L_0^{\mc{A}\wt{M}}(k)$ is an involution, i.e., $\mb{K}^2=\one_{\wt{\mc{T}}}$.
 \item[$(b)$] $\big(\wt{M},\mb{c}\big)$ admits a pure twistor spinor $\chi\in\Ga\big(\wt{\Sigma}_-\big[\frac12\big]\big)$ with $k\in\Gamma(\ker\chi)$ such that the corresponding parallel tractor spinor $\mb{s}_F=L_0^{\wt{\mc{S}}_{-}}(\chi)$ is pure.
 \item[$(c)$] $\mb{K}$ acts by minus the identity on $\ker\mb{s}_F$.
 \item[$(d)$] The following integrability condition holds:
 \begin{gather}\label{condition-weyl0}\tag{W}
 v^aw^c \wt{W}_{abcd}=0, \qquad \text{for all}\ v,w\in\Ga(\ker\chi).
 \end{gather}
\end{enumerate}
\end{Proposition}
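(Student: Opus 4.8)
The plan is to treat (a)--(c) as a matter of collecting results already established for the induced tractor fields, reserving the real work for the Weyl integrability condition (d). For (a) I would combine Proposition~\ref{killing}, which gives that $k=\Pi_0(\mb{K})$ is a conformal Killing field and that $\mb{K}=L_0^{\mc{A}\wt{M}}(k)$ under the identification $\mc{A}\wt{M}\cong\La^2\wt{\mc{T}}$, with Proposition~\ref{prop-objects}, which supplies that $k$ is nowhere-vanishing and light-like and that $\mb{K}^2=\one_{\wt{\mc{T}}}$. Part (b) follows from Proposition~\ref{prop-spinor}, by which $\chi=\Pi_0(\mb{s}_F)$ is a pure twistor spinor with pure splitting $\mb{s}_F=L_0^{\wt{\mc{S}}_-}(\chi)$, together with the inclusion $k\in\Ga(\ker\chi\cap\ker\eta)\subseteq\Ga(\ker\chi)$ from Proposition~\ref{prop-objects}. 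Part (c) is precisely item (c) of Proposition~\ref{prop-objects}.

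The content is (d), and the strategy is to identify $\wt{W}$ with the $\ti\g_0$-component of the normal Cartan curvature and then apply the integrability condition \eqref{integrability-cond}. By \eqref{curvature} the $\so(n,n)$-part of the tractor curvature, i.e.\ the $\wt{\mbb{E}}_{[a_0a_1]}$-slot, is exactly $\wt{W}_{c_0c_1a_0a_1}$; here $c_0c_1$ are the curvature arguments while the endomorphism indices $a_0a_1$ arise from the action of $\ti\g_0\cong\co(n,n)$ on $T\wt{M}$. Working along the reduction $\G\embed\wt\G$ and using $\ker\chi=\wt{f}$ from Proposition~\ref{prop-objects}(d), I would insert a vector $X\in f$ into one curvature slot. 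By \eqref{integrability-cond} the value then lies in $\La^2\bar{F}\oplus\f$, whose $\ti\g_0$-part is $\La^2\bar{F}$ in view of \eqref{useful}; hence the endomorphism part of $i_X\wt{W}$ takes values in $\La^2\bar{F}\cong(\La^2 f)[-2]$.

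It then remains to convert ``value in $\La^2\bar{F}$'' into a tensorial identity. For a decomposable $Y\wedge Z\in\La^2\bar{F}$ with $Y,Z\in\bar{F}=F\cap L^\perp$, the associated skew endomorphism of $L^\perp/L$ is $u\mapsto h(Y,u)\,Z-h(Z,u)\,Y$; since $\bar{F}$ is isotropic this map annihilates $f$ and has image in $f$, so after lowering the output index with the conformal metric the corresponding $2$-form $\omega_{a_0a_1}$ obeys $\omega_{a_0a_1}v^{a_0}=0$ for every $v\in f$. Applied to the endomorphism part of $i_X\wt{W}$ this gives $\wt{W}_{c_0c_1a_0a_1}\,X^{c_0}v^{a_0}=0$ for all $X,v\in f$, and the pair symmetry $\wt{W}_{abcd}=\wt{W}_{cdab}$ turns this into $v^aw^c\wt{W}_{abcd}=0$ for $v,w\in\ker\chi$, which is \eqref{condition-weyl0}.

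The main obstacle I anticipate is organisational rather than computational: correctly matching the abstract $\ti\g$-module pieces $\f$ and $\La^2\bar{F}$ of \eqref{integrability-cond} with the concrete index slots of the Weyl tensor, and confirming that the $\ti\g_0$-part of $\wt{\ka}^{nor}$ really is $\wt{W}$ and carries no trace or $\im\wt{\partial}^*$ correction. The latter holds because $\wt{\om}^{nor}$ is normal, so $\wt{\ka}^{nor}$ is $\wt{\partial}^*$-closed and its $\ti\g_0$-part agrees with the harmonic Weyl curvature of \eqref{curvature}; moreover elements of $\La^2\bar{F}\subseteq\La^2 F$ are nilpotent, hence trace-free, so they feed only the $\so(n,n)$-part carrying $\wt{W}$ and not the scaling part, consistently with the vanishing trace slot in \eqref{curvature}.
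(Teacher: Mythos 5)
Your proposal is correct and follows essentially the same route as the paper: parts (a)--(c) are assembled from Propositions \ref{prop-objects}, \ref{prop-spinor} and \ref{killing}, and part (d) is deduced from the integrability condition \eqref{integrability-cond} established for $\wt{\ka}^{nor}$ along $\G\embed\wt\G$. The paper packages the step from \eqref{integrability-cond} to \eqref{condition-weyl0} as the ``obvious'' direction of Lemma \ref{lem-weyl}; your explicit computation with decomposable elements of $\La^2\bar F$ is just a correct spelling-out of that direction.
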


The only thing left to show for Proposition \ref{prop-properties} is that the integrability condition \eqref{integrability-cond} is equivalent to the condition \eqref{condition-weyl0} on the Weyl tensor:
\begin{Lemma}\label{lem-weyl} Let $\big(\wt{M},\mb{c}\big)$ be a split-signature conformal spin structure endowed with tractors~$\mb{s}_E$,~$\mb{s}_F$ and $\mb{K}$ satisfying conditions~$(a)$ and~$(b)$ from Proposition~{\rm \ref{prop-properties}}. Then condition~\eqref{integrability-cond} is equivalent to~\eqref{condition-weyl0}.
\end{Lemma}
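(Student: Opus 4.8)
The plan is to unravel the abstract integrability condition \eqref{integrability-cond} into graded components and to match the resulting constraints on the Weyl and Cotton tensors with \eqref{condition-weyl0}. Since $\mb{s}_F$ is parallel by hypothesis (b), the conformal holonomy is contained in the stabiliser of $s_F$, so $\wt{\ka}^{nor}$ takes values in $\ker s_F=\sl(n+1)\oplus\La^2 F$, see \eqref{kers}. Conformal geometries are torsion-free, so the $\ti\g_{-1}$-part vanishes; the $\ti\g_0$-part is the Weyl tensor $\wt{W}$, with values in $(\ti\g_0\cap\sl(n+1))\oplus\La^2\bar{F}$ because $\La^2 F\cap\ti\g_0=\La^2\bar{F}$ by \eqref{useful}, and the $\ti\p_+$-part is the Cotton tensor $\wt{Y}$, with values in $\ti\p_+\cap\ker s_F=\f$. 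As $\f$ is the annihilator of $f=\ker\chi$, the condition \eqref{integrability-cond} is equivalent to the conjunction of: (i) for $X\in f$ and all $Y$ the $\ti\g_0$-value $\wt{W}(X,Y)$ lies in $\La^2\bar{F}$, i.e.\ its $\sl(n+1)$-part vanishes; and (ii) $\wt{\ka}^{nor}(X,Y)=0$ for all $X,Y\in f$. The containment of $\wt{Y}(X,Y)$ in $\f$ is then automatic.

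The central step is the algebraic identification of $\La^2\bar{F}$ inside $\ti\g_0\cong\co(n,n)$ acting on $\ti\g/\ti\p$. Since $\bar{F}$ maps isomorphically onto $f=\ker\chi$, a direct check shows that $\La^2\bar{F}$ is exactly the abelian nilradical formed by the skew endomorphisms that annihilate the maximal isotropic $f$ (equivalently, that have image in $f$). Thus (i) asserts that $\wt{W}(X,Y)$ annihilates $f$ for every $X\in f$; in abstract indices this is $v^a w^d\wt{W}_{abcd}=0$ for $v,w\in f$, which by the block skew-symmetry $\wt{W}_{abcd}=-\wt{W}_{abdc}$ is precisely \eqref{condition-weyl0}. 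This already yields the implication \eqref{integrability-cond}$\Rightarrow$\eqref{condition-weyl0}, since (i) is contained in \eqref{integrability-cond}.

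For the converse it remains to derive (ii) from \eqref{condition-weyl0}. I split $\wt{\ka}^{nor}(X,Y)$, $X,Y\in f$, into its $\ti\g_0$- and $\ti\p_+$-parts. The Weyl part $v^a w^b\wt{W}_{abcd}=0$ follows from the first Bianchi identity $\wt{W}_{a[bcd]}=0$: contracting it with $v^a w^b$ ($v,w\in f$) expresses $v^a w^b\wt{W}_{abcd}$ through two terms in which an $f$-vector sits in the second antisymmetric block, so that \eqref{condition-weyl0} (in its equivalent form with either index of that block contracted into $f$) makes them vanish. The Cotton part $v^a w^b\wt{Y}_{cab}=0$ follows by the same mechanism from the Cotton Bianchi symmetry $\wt{Y}_{[cab]}=0$, now combined with the automatic fact that $u^c\wt{Y}_{cab}=0$ for $u\in f$ (the containment of $\wt{Y}$ in $\f$): contracting $\wt{Y}_{[cab]}=0$ with $v^a w^b$ moves an $f$-vector into the distinguished index of the two remaining terms, which therefore vanish.

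The main obstacle I anticipate is the algebraic identification in the second paragraph --- recognising $\La^2\bar{F}$ as the $f$-annihilating nilradical of $\ti\g_0$ and translating the frame-level membership ``$\wt{W}(X,\,\cdot\,)\in\La^2\bar{F}$'' faithfully into the contraction \eqref{condition-weyl0}, while tracking the pair-exchange and block skew-symmetries of $\wt{W}$. Once this is in place, the two Bianchi arguments closing (ii) are short and formal; the noteworthy point is that the Cotton contribution is governed by its own algebraic Bianchi identity together with the holonomy constraint $\wt{Y}\in\f$, so that no differential relation between $\wt{Y}$ and $\wt{W}$ is needed and \eqref{condition-weyl0} alone suffices.
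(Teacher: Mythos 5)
Your proof is correct and follows essentially the same route as the paper's: the forward implication is read off from the identification of $\La^2\bar{F}$ with $\La^2 f$ acting on $\ti\g/\ti\p$, and the converse combines the holonomy constraint coming from $\wt{\na}^{nor}\mb{s}_F=0$ with the algebraic Bianchi identities for $\wt{W}$ and $\wt{Y}$. The only point of divergence is the Cotton step: you invoke the single-insertion vanishing $u^c\wt{Y}_{cab}=0$ for $u\in\Ga(\ker\chi)$ directly from the fact that the $\ti\p_+$-component of $\wt{\ka}^{nor}$ lies in $\f$ (a fact the paper also records in the first part of its proof), whereas the paper first derives $v^aw^r\wt{Y}_{rab}=0$ by letting $\io_v\wt{\Om}^{nor}$ act on sections of $\wt{\overline{\mc{F}}}$ and only then applies the cyclic identity $\wt{Y}_{rab}=-\wt{Y}_{bra}-\wt{Y}_{abr}$; the two are interchangeable here (note only that the single-insertion statement is tied to the adapted/reduced grading along $\G\embed\wt{\G}$, since the Cotton tensor is scale-dependent, while the double contractions you actually need are conformally invariant once \eqref{condition-weyl0} holds). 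One small inaccuracy: $\ti\g_0\cap\ker s_F$ is strictly larger than $(\ti\g_0\cap\sl(n+1))\oplus\La^2\bar{F}$, because the $G$-invariant splitting $\ti\g=\sl(n+1)\oplus\g^\perp$ is not compatible with the conformal grading (see the coupling of the $Y$-entries between the $E\otimes F$- and $\La^2F$-blocks in \eqref{parabolic}); this does not damage your argument, since the equivalence of ``$\ti\g_0$-value in $\La^2\bar{F}$'' with \eqref{condition-weyl0} goes directly through the action on $\ti\g/\ti\p$ and needs no such decomposition.
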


\begin{proof} The implication \eqref{integrability-cond} $\Longrightarrow$~\eqref{condition-weyl0} is obvious. It remains to prove the converse implication~\eqref{condition-weyl0} $\Longrightarrow$~\eqref{integrability-cond}.

By \eqref{condition-weyl0}, one has that $ \bigl(i_X\wt{\ka}^{\rm nor}\bigr)_{\ti{\g}_0}(u)\in \big(f\t\La^2 f\big)[-4]\subseteq \f\t\La^2\bar{F}$ for $X\in f$, $u\in\G$. Since~$\mb{s}_F$ is parallel with respect to $\wt{\na}^{\rm nor}$, we have $\wt{\ka}^{\rm nor}(u)\in \La^2(\ti{\g}/\ti{\p})^*\t \bigl(\ti{\p}\cap\ker s_F\bigr)$. The projection of $\ti{\p}\cap\ker s_F$ to ${\ti{\p}_{+}}$ is precisely $\f$, hence it follows that $\bigl(i_X\wt{\ka}^{\rm nor}\bigr)_{\ti{\p}_+}(u)\in (\ti{\g}/\ti{\p})^*\t \f$, and we obtain $i_X\wt{\ka}^{\rm nor}(u)\in (\ti{\g}/\ti{\p})^*\t\big(\La^2 \bar{F}\oplus \f\big)$.

We now prove that $i_{X_1}i_{X_2}\wt{\ka}^{\rm nor}=0$ for all $X_1,X_2\in f$. For this purpose it will be useful to work with the curvature form $\wt{\Om}^{\rm nor}$, which we can represent as in~\eqref{curvature}. By~\eqref{condition-weyl0} and the algebraic Bianchi identity, $\wt{W}_{abcd}$ vanishes upon insertion of $v,w\in\Ga(\ker\chi)$ into any two slots, and in particular $v^aw^b\wt{W}_{abcd}=0$. Thus, it remains to check that $v^aw^b\wt{Y}_{dab}=0$. As in the proof of Proposition~\ref{prop-objects}, a vector f\/ield $w\in \Ga(\ker\chi)$ corresponds to a section
$\left(\begin{smallmatrix}
 * \\ w^d \\ 0
 \end{smallmatrix}\right)\in\Ga\big(\wt{\overline{\mc{F}}}\big)$.
According to \eqref{adjact},
\begin{gather*}
v^a\wt{\Om}^{\rm nor}_{ab}\bdot\begin{pmatrix}
* \\ w^d \\ 0
\end{pmatrix}=\begin{pmatrix}
-v^a \wt{Y}_{rab}w^r \\
v^a \wt{W}_{ab\; d}^{\;\;\; c} w^d \\ 0
\end{pmatrix}
\in \Ga\big(\wt{\mathbb{E}}_b\t \wt{\mc{T}}\big).
\end{gather*}
Since $i_v\wt{\Om}^{\rm nor}$ annihilates $\wt{\overline{\mc{F}}}$, it follows that $v^aw^r \wt{Y}_{rab}=0$. Using $\wt{Y}_{rab}=-\wt{Y}_{bra}-\wt{Y}_{abr}$, we obtain also $v^aw^b \wt{Y}_{rab}=0$.
\end{proof}

\subsection{Characterisation}\label{sec-backward}
We are now going to characterise the induced conformal structures. For this purpose we will introduce the following ('intermediate') Cartan connection form:
\begin{gather} \label{omti'}
\wt{\om}' := \wt{\om}^{\rm nor} -\frac{1}{2}\io_k\wt{\ka}^{\rm nor}.
\end{gather}
 The following observation then follows immediately from Proposition~\ref{prop-normal} and formula~\eqref{Psi_proj}:
 \begin{Lemma}\label{lem-same} The pullbacks of the Cartan connection forms $\wt{\om}'\in\Om^1_{\rm hor}\big(\wt{\G},\ti\g\big)$, $\wt{\om}^{\rm nor}\in\Om^1_{\rm hor}\big(\wt{\G},\ti{\g}\big)$ and $\wt{\om}^{\rm ind}\in\Om^1_{\rm hor}\big(\wt{\G},\ti{\g}\big)$ to~$\mathcal{G}\embed\wt{\G}$ agree modulo forms with values in $\p_+\subseteq\sl(n+1)\subseteq\wt{\g}$.
 \end{Lemma}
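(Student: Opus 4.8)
The plan is to pull all three forms back to the canonical reduction $\G\embed\wt{\G}$ and to read off each contribution in the $\g$-invariant decomposition $\ti\g=\sl(n+1)\oplus\rr K\oplus\La^2 E\oplus\La^2 F$ of \eqref{tig-decomp}. First I would invoke Proposition \ref{prop-normal}: along $\G$ one has $\wt\om^{nor}=\wt\om^{ind}+\Psi^1+\Psi^2$, where the frame functions $\phi^1,\phi^2$ of $\Psi^1,\Psi^2$ take values in $\La^2\bar{F}$ and in $\f$ respectively, while $\wt\om^{ind}$ is itself $\g=\sl(n+1)$-valued by \eqref{omtilde}. Thus the entire discrepancy between $\wt\om^{nor}$ and $\wt\om^{ind}$ is carried by $\phi=\phi^1+\phi^2$, and everything reduces to locating $\phi$ in the above decomposition.

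The key algebraic point I would isolate next is that $\f$ is not an honest summand of the decomposition but a ``diagonal'' subspace: by \eqref{eq-tp+p} one has $\left(\ti\p_+\cap\ker s_F\right)_{E\t F}=\p_+$, so every element of $\f=\varphi(\p_+)$ splits uniquely into a $\p_+$-part lying in $\sl(n+1)$ and a genuine $\La^2 F$-part. Writing $\phi^2=\phi^2_{\p_+}+\phi^2_{\La^2 F}$ accordingly, and recalling that $\phi^1$ is already $\La^2\bar{F}\subseteq\La^2 F$-valued, the $\La^2 F$-component of the total modification is $\phi_{\La^2 F}=\phi^1+\phi^2_{\La^2 F}$, whereas its $\sl(n+1)$-component is exactly $\phi^2_{\p_+}\in\p_+$.

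The cancellation is then delivered by \eqref{Psi_proj}, which reads $\io_k\wt\ka^{nor}=2\,\phi_{\La^2 F}$: subtracting $\tfrac12\io_k\wt\ka^{nor}$ from $\wt\om^{nor}$ removes precisely the $\La^2 F$-valued part of $\phi$. Concretely, along $\G$ the form $\wt\om'-\wt\om^{ind}=(\phi^1+\phi^2)-\tfrac12\io_k\wt\ka^{nor}=(\phi^1+\phi^2)-(\phi^1+\phi^2_{\La^2 F})=\phi^2_{\p_+}$ takes values in $\p_+\subseteq\sl(n+1)$. This establishes that $\wt\om'$ and $\wt\om^{ind}$ coincide modulo $\p_+$ along $\G$; the third form $\wt\om^{nor}$ enters only through the definition of $\wt\om'$, differing from it by the explicit, purely conformal term $\tfrac12\io_k\wt\ka^{nor}=\phi_{\La^2 F}$, so that the entire comparison of the three connections reduces to this single $\p_+$-valued identity.

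I expect the only real work to be bookkeeping, and the one genuinely delicate point is precisely the nature of $\f$: one must keep the image $\varphi(\p_+)$ — which carries both an $\sl(n+1)$-component equal to $\p_+$ and a nonzero $\La^2 F$-component, reflecting the relation $\wt{Z}_j-Z_j\in\g^\perp\cap\ker s_F=\La^2 F$ already used in the proof of Proposition \ref{prop-norm} — carefully distinct from the honest summand $\La^2 F$ of $\ti\g$. Once this splitting is pinned down and matched against the normalisation constant in \eqref{Psi_proj}, the cancellation of the $\La^2 F$-valued contributions is exact and the claimed agreement modulo $\p_+$ follows immediately.
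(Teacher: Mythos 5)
Your proof is correct and follows the paper's own route exactly: the paper offers no argument beyond citing Proposition \ref{prop-normal} and \eqref{Psi_proj}, and your computation $\wt\om'-\wt\om^{ind}=(\phi^1+\phi^2)-\phi_{\La^2 F}=\phi^2_{\p_+}$, resting on the splitting of $\f$ into its $\p_+$-part and its $\La^2 F$-part via \eqref{eq-tp+p}, is precisely the intended bookkeeping. The one point worth flagging is that, as your own calculation shows, $\wt\om^{nor}$ differs from the other two by a form with values in $\La^2 F$ in addition to $\p_+$, so the lemma's literal inclusion of $\wt\om^{nor}$ must be read as a statement about the $\sl(n+1)$-components of the pullbacks; this is all that is used later (only the comparison of $\wt\om'$ with $\wt\om^{ind}$ enters the proof of Theorem \ref{thm-char-trac}), and your write-up makes that distinction correctly.
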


For the rest of this section, we will start with a given split-signature conformal spin structure $\big(\wt{M},\mb{c}\big)$ satisfying all the properties of Proposition~\ref{prop-properties}. In particular, $\wt{M}$ is endowed with a~conformal Killing f\/ield $k \in \Ga\big(T\wt{M}\big)$, and we can still use formula~\eqref{omti'} to def\/ine a Cartan connection~$\wt{\om}'$. The corresponding tractor connection will be denoted by~$\wt{\na}'$ and the curvature by~$\wt{\Om}'$ or~$\wt{\ka}'$. The following proposition now shows that the so constructed Cartan connection~$\wt{\om}'$ is in fact an $\SL(n+1)$-connection.

\begin{Proposition} \label{Omti'} Let $\big(\wt{M}, \mb{c}\big)$ be a split-signature conformal $($spin$)$ structure satisfying all the properties of Proposition~{\rm \ref{prop-properties}}. Then the sections~$\mb{s}_F$ and~$\mb{K}$ are parallel with respect to the tractor connection~$\wt{\na}'$, i.e., $\wt{\na}' \mb{s}_F =0$ and $\wt{\na}' \mb{K} =0$.

In particular, $\mathrm{Hol}(\wt{\om}')\subseteq \SL(n+1)\subseteq\Spin(n+1,n+1)$ and $\wt{\om}'$ pulls back to a Cartan connection of type $(\SL(n+1), Q)$ with respect to the $Q$-reduction $\G\embed \wt{\mc{G}}$. Along that reduction, the curvature functions $\wt{\ka}'$ and~$\wt{\ka}^{\rm nor}$ are related according to $\wt{\ka'} = \bigl(\wt{\ka}^{\rm nor}\bigr)_{\sl(n+1)}$ and $\wt{\ka}'$ satisfies the following integrability condition:
\begin{gather}\label{eqcur2}
i_X\wt{\ka}'(u)\in\f\t \p_+, \qquad \mbox{for all}\ X \in f,\ u\in\G.
\end{gather}
\end{Proposition}

\begin{proof} A tractor connection induced by $\wt{\om}'$ can be written as $\wt{\na}'=\wt{\na}^{\rm nor}+\Psi$ with $\Psi=-\frac{1}{2}\io_k\wt{\Om}^1$. That $\wt{\na}' \mb{s}_F =0$ follows immediately from the fact that $\wt{\na}'-\wt{\nabla}^{\rm nor}=-\frac{1}{2} i_{k}\wt{\Om}^{\rm nor}$ has values in $\Lambda^2 \wt{\mc{F}}$. Since $k$ is a conformal Killing f\/ield we have $\wt{\nabla}^{\rm nor}\mb{K}=i_{k}\wt{\Om}^{\rm nor}$. By def\/inition
\begin{gather*}
\wt{\nabla}'\mb{K}=\wt{\nabla}^{\rm nor}\mb{K}-\frac{1}{2}i_k\wt{\Om}^{\rm nor}\bdot\mb{K},
\end{gather*}
which vanishes, since $i_k\wt{\Om}^{\rm nor}$ has values in $\Lambda^2 \wt{\mc{F}}$ and therefore $\frac{1}{2}i_k\wt{\Om}^{\rm nor}\bdot\mb{K}=i_{k}\wt{\Om}^{\rm nor}$. As in Proposition~\ref{prop-properties}, we write the decomposition of $\wt{\mc{T}}$ into maximally isotropic eigenspaces of $\mb{K}$ with eigenvalues $\pm 1$ as $\wt{\mc{E}}\oplus\wt{\mc{F}}$. Since~$\mb{K}$ is $\wt{\nabla}'$-parallel, it follows that this decomposition is preserved by~$\wt{\nabla}'$. Moreover, since $\wt{\mc{F}}$ is the kernel of the pure tractor spinor~$\mb{s}_F$ it follows that $\mathrm{Hol}(\wt{\om}')\subseteq \SL(n+1)$. In particular, $\wt{\om}'$ reduces to a Cartan connection of type $(\SL(n+1), Q)$ on a~$Q$-principal bundle $\mathcal{G}\subseteq\wt{\mathcal{G}}$.

We further compute that
\begin{gather*}
\wt{\Om}' = {\wt{\Om}}^{\rm nor} - \frac12 d^{\wt{\na}^{\rm nor}} \io_k \wt{\Om}^{\rm nor}
= \wt{\Om}^{\rm nor} - \frac12 d^{\wt{\na}^{\rm nor}} \wt{\na}^{\rm nor} \mb{K}\\
\hphantom{\wt{\Om}'}{} = \wt{\Om}^{\rm nor} - \frac12 \wt{\Om}^{\rm nor} \bullet \mb{K} \notag
= \wt{\Om}^{\rm nor} + \frac12 \mb{K} \bullet \wt{\Om}^{\rm nor}
= \bigl(\wt{\Om}^{\rm nor}\bigr)_{(\wt{\mc{E}}\t\wt{\mc{F}})_0},
\end{gather*}
where we are again using $\wt{\na}^{\rm nor} \mb{K}= \io_k \wt{\Om}^{\rm nor}$ for the conformal Killing f\/ield $k$ and that $\wt{\Om}^{\rm nor}$ has values in $\wt{\mc{E}} \otimes \wt{\mc{F}} \oplus \Lambda^2\wt{\mc{F}}$. Stated for the corresponding curvature functions, this yields $\wt{\ka'} = \bigl(\wt{\ka}^{\rm nor}\bigr)_{\sl(n+1)}$. Moreover, since $\wt{\ka}^{\rm nor}$ has values in $\ker s_F\cap\wt{\p}$, it follows from~\eqref{eq-tp+p} that $( \tilde{\p} \cap \ker s_F )_{\sl(n+1)} = \p$, thus $\wt{\ka}'$ has values in~$\p$.

We know from~\eqref{integrability-cond} that $i_X \wt{\ka}^{\rm nor}$ has values in $\La^2\bar{F}\oplus \f$ for $X\in f$. But since $\big(\La^2\bar{F}\big)_{\sl(n+1)}=0$ and $(\tilde{\p}_+ \cap \ker s_F )_{\sl(n+1)}= \p_+$, we obtain that $(i_X\wt{\ka}^{\rm nor})_{\sl(n+1)}$ has values in $\p_+$. Finally, $\big(i_{X_1}i_{X_2}\wt{\ka}^{\rm nor}\big)_{\sl(n+1)}=0$ for $X_1,X_2\in f$ follows immediately from~\eqref{integrability-cond}, and altogether we obtain~\eqref{eqcur2}.
\end{proof}

Next, before proving the main characterisation Theorem~\ref{thm-char-trac}, we will show the following proposition on factorisations of particular Cartan geometries. This proposition can be understood as an adapted variant of \cite[Theorem~2.7]{cap-correspondence}.

\begin{Proposition}\label{prop-corradapt} Let $\big(\mathcal{G}\to\wt{M},\omega\big)$ be a Cartan geometry of type $(\SL(n+1),Q)$ with curvature $\ka\colon \mathcal{G}\to\Lambda^2(\g/\q)^*\otimes\g$ and let the following conditions be satisfied:
\begin{gather*}
 i_{X_1}i_{X_2}\ka(u)\in\mathfrak{p}, \qquad \text{for all}\ X_1, X_2\in\g/\q,\ u\in\G,\\
 i_{X_1}i_{X_2}\ka(u)\in\mathfrak{p}_{+}, \qquad \text{for all}\ X_1 \in \mathfrak{p}/\mathfrak{q},\ X_2 \in\g/\q,\ u\in\G,\\
 i_{X_1}i_{X_2}\kappa(u)=0, \qquad \text{for all}\ X_1,X_2\in\mathfrak{p}/\mathfrak{q},\ u\in\G.
\end{gather*}
Then $\mathcal{G}$ is locally a $P$-bundle over $M=\mathcal{G}/P$ and~$\om$ defines a~canonical projective structure on~$M$.
\end{Proposition}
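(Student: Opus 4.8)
The plan is to adapt the proof of Theorem 2.7 of \cite{cap-correspondence}: the three curvature hypotheses are exactly what is needed first to reconstruct a principal $P$-bundle and then to repair the Cartan connection along the fibres. First I would consider the distribution $\mathcal{D}:=\om^{-1}(\p)\subseteq T\G$, which contains the vertical bundle $\om^{-1}(\q)$ of $\G\to\wt{M}$. For $X,Y\in\p$ the structure equation gives $[\om^{-1}(X),\om^{-1}(Y)]=\om^{-1}([X,Y]-\ka(X,Y))$, and by the third hypothesis $\ka(X,Y)=0$ whenever $X,Y\in\p/\q$; hence this bracket equals $\om^{-1}([X,Y])$. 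Thus $X\mapsto\om^{-1}(X)$ is a Lie algebra homomorphism $\p\to\X(\G)$ extending the infinitesimal $Q$-action, $\mathcal{D}$ is involutive, and its leaves are the orbits of a local right $P$-action. Since $\om$ is fibrewise injective this action is locally free, so a standard slice argument (as in \cite{cap-correspondence}) yields, locally around each point, a smooth leaf space $M=\G/P$ with $\G\to M$ a principal $P$-bundle and $\wt{M}=\G/Q\to M$ the associated bundle with fibre $P/Q$.

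Next I would repair $\om$. It reproduces the fundamental fields of the $P$-action and is a pointwise isomorphism, so the only Cartan-connection axiom it may violate is $\Ad$-equivariance. Computing $\mathcal{L}_{\om^{-1}(X)}\om+\ad(X)\circ\om=\ka(X,\,\cdot\,)$ for $X\in\p$, one sees that the equivariance defect is exactly the contracted curvature, which by the second hypothesis takes values in the nilradical $\p_{+}$ for $X\in\p/\q$ (and vanishes for $X\in\q$). In particular the defect dies modulo $\p$, so $\om$ already induces a well-defined $P$-equivariant soldering form $T\G\to\g/\p$. To obtain a genuine Cartan connection of type $(\SL(n+1),P)$ I would then seek a $\p_{+}$-valued, $\G\to M$-horizontal, $Q$-equivariant one-form $\Phi$ solving $\mathcal{L}_{\om^{-1}(X)}\Phi+\ad(X)\circ\Phi=\ka(X,\,\cdot\,)$ along the fibres $P/Q$, and set $\hat\om:=\om-\Phi$; by construction $\hat\om$ then satisfies $\mathcal{L}_{\om^{-1}(X)}\hat\om+\ad(X)\circ\hat\om=0$ for all $X\in\p$, i.e.\ it is $\Ad$-equivariant, while still reproducing the fundamental fields (as $\Phi$ is horizontal) and remaining fibrewise bijective (as $\Phi$ is $\p_{+}$-valued).

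Finally I would check that $\hat\om$ is torsion-free. Its curvature function differs from $\ka$ only by the terms $-d\Phi-[\om,\Phi]+\tfrac12[\Phi,\Phi]$; since the projective grading is $|1|$-graded one has $[\g,\p_{+}]\subseteq\p$ and $[\p_{+},\p_{+}]=0$, so all these corrections lie in $\p$, as does $\ka$ by the first hypothesis. Hence the $\g/\p$-component of the curvature of $\hat\om$ vanishes, $\hat\om$ is a regular, torsion-free parabolic geometry of type $(\SL(n+1),P)$, and its underlying structure is a class of projectively equivalent torsion-free connections. Because any admissible correction $\Phi$ alters $\hat\om$ only by a $\p_{+}$-valued tensorial form, which leaves this class unchanged, the resulting projective structure $\mb{p}$ on $M$ is independent of the choices and hence canonical.

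I expect the main obstacle to be the second step: one must show that the transport equation for $\Phi$ along the connected fibres $P/Q$ is consistent (a cocycle condition guaranteed by hypotheses (2) and (3) together with the Bianchi identity) and that any two solutions differ only by a $\p_{+}$-valued tensorial form, so that the projective structure extracted in the third step is well defined. This is precisely the point at which the non-normality of the construction enters, in contrast with the horizontal case $i_X\ka=0$ treated in \cite{cap-correspondence}; the explicit control of this difference is then what is refined later in Theorem \ref{thm-relation}.
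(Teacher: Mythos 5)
Your first step (involutivity of $\om^{-1}(\p)$ and the local construction of the leaf space $M=\G/P$) is exactly what the paper imports from \cite{cap-correspondence}, and you use the three curvature hypotheses in the correct roles. The genuine gap is in your second step: the whole argument rests on the existence of a horizontal, $\p_+$-valued, suitably equivariant one-form $\Phi$ solving $\mathcal{L}_{\om^{-1}(X)}\Phi+\ad(X)\circ\Phi=\ka(X,\cdot)$ for \emph{all} $X\in\p$ simultaneously, and you do not establish this — you only remark that the required consistency ``should'' follow from hypotheses (2), (3) and the Bianchi identity. This is not a routine verification: one must first integrate the infinitesimal identity $\mathcal{L}_{\zeta_X}\om=-\ad(X)\circ\om$ mod $\p_+$ over the connected fibres $P/Q$ (using that $\p_+$ is an $\Ad(P)$-invariant ideal of $\p$) to conclude that $\om$ is $P$-equivariant modulo $\p_+$, and then prove or correctly invoke the lemma that such a ``Cartan connection modulo $\p_+$'' admits a genuine $P$-equivariant lift obtained by subtracting a horizontal $\p_+$-valued form. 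As written, the central object $\hat\om$ of your proof is not shown to exist, so the projective structure is never actually produced.

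The paper avoids this difficulty altogether. Instead of first building a Cartan connection of type $(G,P)$, it picks a $G_0$-equivariant splitting $\sigma:\G_0=\G/P_+\to\G$, uses hypothesis (2) to get $\mathcal{L}_{\zeta_X}\om=-\ad(X)\circ\om$ modulo $\p_+$ for $X\in\p$, and observes that the $\g_-\oplus\g_0$-components $\theta\oplus\gamma$ of $\sigma^*\om$ are therefore genuinely $G_0$-equivariant — the $\p_+$-ambiguity that causes your cocycle problem is simply invisible in these slots. This gives a reductive Cartan geometry, i.e.\ an affine connection on $M$, which is torsion-free by hypothesis (1); changing the splitting by $\exp(\Upsilon)$ with $\Upsilon$ valued in $\p_+$ changes $\gamma$ within a projective class, so the projective structure is canonical. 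The correction of $\om$ to a normal $(G,P)$-Cartan connection appears only as a closing remark identifying the splittings as Weyl structures, not as a prerequisite for defining $\mb{p}$. If you wish to keep your route you must supply the integration argument and the correction lemma explicitly; otherwise the splitting argument is both shorter and sidesteps precisely the obstacle you flagged.
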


\begin{proof} The third of the above listed conditions implies that~$\mathcal{G}$ is locally a~$P$-bundle $\mathcal{G}\to M$ by~\cite{cap-correspondence}. We will restrict $\mathcal{G}$ to assume this globally. We def\/ine $M=\mathcal{G}/P$ and~$\mathcal{G}_0=\mathcal{G}/P_{+}$.

Let $\sigma\colon \mathcal{G}_0\to\mathcal{G}$ be a~$G_0$-equivariant splitting. It follows from the second of the above listed conditions that
\begin{gather*}
\mathcal{L}_{\zeta_{X}}\omega=-\ad(X)\circ\omega \mod \mathfrak{p}_{+}, \qquad \text{for all $X\in\mathfrak{p}$}.
\end{gather*}
Now def\/ine
$\theta\in\Omega^1(\mathcal{G}_0,\mathfrak{g}_{-})$, $\gamma\in\Omega^1(\mathcal{G}_0,\mathfrak{g}_0)$ and $\rho\in\Omega^1(\mathcal{G}_0,\mathfrak{p}_{+})$ via the decomposition $\sigma^*\omega=\theta\oplus\gamma\oplus\rho$. Since $\sigma$ is $G_0$-equivariant and the Lie derivative is compatible with pullbacks it follows that
\begin{gather*}
\mathcal{L}_{\bar{\zeta}_{X}}(\theta\oplus\ga)=-\ad(X)\circ(\theta\oplus\ga), \qquad \text{for all $X\in\mathfrak{g}_0$}.
\end{gather*}
In particular, $\theta$ and $\gamma$ are $G_0$-equivariant and def\/ine a~(reductive) Cartan geometry $(\mathcal{G}_0\to M,\theta\oplus\gamma)$ of type $(\mathbb{R}^n\rtimes \SL(n),\SL(n))$, i.e., an af\/f\/ine connection on~$M$. Since by assumption $\Omega$ has values in $\mathfrak{p}$, $\theta\oplus\gamma$ is torsion-free and so is the af\/f\/ine connection.

Now take another splitting $\sigma'=\sigma\cdot \exp\Upsilon$, for some $\Upsilon\colon \mathcal{G}\to\mathfrak{p}_+$. Since $\Ad(\exp\Upsilon)$ acts by the identity on $\mathfrak{g}_{-}=\mathfrak{g}/\mathfrak{p}$, one has $(r^{\exp\Upsilon})^*\omega=\omega\mod\mathfrak{p}$, and thus $\theta$ is independent of the choice of splitting. Then $\sigma'^*\omega=\theta\oplus\gamma'\oplus\rho'$ and $\theta\oplus\gamma'=\Ad(\exp\Upsilon)\circ (\theta\oplus\gamma)$, projected to $\mathfrak{g}_{-}\oplus\mathfrak{g}_0$. But since $\exp\Upsilon\in P_+$, this shows that $\gamma'$ is projectively equivalent to $\gamma$. We thus obtain a~well-def\/ined projective structure on $M$.

Since $\omega$ is $P$-torsion-free and $P$-equivariant modulo $\mathfrak{p}_+$, it can be (uniquely) modif\/ied to a~normal Cartan connection $\omega^{\rm nor}\in\Omega^1(\mathcal{G},\mathfrak{g})$ with $\omega^{\rm nor}-\omega\in\Omega^1(\mathcal{G},\mathfrak{p}_{+})$. In particular, each splitting $\sigma\colon \mathcal{G}_0\to\mathcal{G}$ is in fact a~Weyl structure of the projective structure on~$M$.
\end{proof}

\begin{Theorem}\label{thm-char-trac} A split-signature $(n,n)$ conformal spin structure $\mb c$ on a~manifold~$\wt{M}$ is $($locally$)$ induced by an $n$-dimensional projective structure via the Fefferman-type construction if and only if the following properties are satisfied:
 \begin{enumerate}\itemsep=0pt
 \item[$(a)$] $\big(\wt{M},\mb{c}\big)$ admits a nowhere-vanishing light-like conformal Killing field $k$ such that the corresponding tractor endomorphism $\mb{K}=L_0^{\mc{A}\wt{M}}(k)$ is an involution, i.e., $\mb{K}^2=\one_{\wt{\mc{T}}}$.
 \item[$(b)$] $\big(\wt{M},\mb{c}\big)$ admits a pure twistor spinor $\chi\in\Ga\big(\wt{\Sigma}_-\big[\frac12\big]\big)$ with $k\in\Gamma(\ker\chi)$ such that the corresponding parallel tractor spinor $\mb{s}_F=L_0^{\wt{\mc{S}}_{-}}(\chi)$ is pure.
 \item[$(c)$] $\mb{K}$ acts by minus the identity on $\ker\mb{s}_F$.
 \item[$(d)$] The following integrability condition holds:
 \begin{gather*}
 v^aw^c \wt{W}_{abcd}=0, \qquad \text{for all}\ v,w\in\Ga(\ker\chi).
 \end{gather*}
\end{enumerate}
\end{Theorem}
\begin{proof}Starting with a projective structure $(M,\mb{p})$, it follows from Proposition \ref{prop-properties} that the induced conformal structure $\big(\wt{M},\mb{c}\big)$ has all the stated properties. On the other hand, let $\big(\wt{M},\mb{c}\big)$ be a conformal structure with the stated properties. Then, by Proposition~\ref{Omti'}, $\wt{\om}'$ restricts to a $Q$-equivariant Cartan connection form with values in $\sl(n+1)$ on the reduction $\G\embed \wt{\mc{G}}$. The corresponding curvature $\wt{\ka}'$ takes values in~$\p$ and for $X\in f$ we have that $i_X\wt{\ka}'$ takes values in~$\p_+$. It follows from Proposition~\ref{prop-corradapt} that~$\wt{\om}'$ factorises to a projective structure~$\mb{p}$ on the leaf space~$M$.

Let us now show that the two constructions are inverse to each other. Assume f\/irst that a conformal structure $(\wt{M},\mb{c})$ is induced by a~projective structure $(M,\mb{p})$. Then according to Lemma~\ref{lem-same} $\wt{\omega}'$ and $\wt{\omega}^{\rm ind}$ agree modulo~$\mathfrak{p}_+$. This implies that the projective structure def\/ined by $\wt{\omega}'$ is equal to the original projective structure. Conversely, assume now that~$(M,\mb{p})$ is a~projective structure with associated Cartan geometry $(\mathcal{G},\omega')$ that is induced from a~conformal structure~$\big(\wt{M},\mb{c}\big)$ with associated Cartan geometry $\big(\wt{\mathcal{G}},\wt{\omega}^{\rm nor}\big)$. Since $\wt{\omega}'$ is not normal, but torsion-free, there is $\varphi\in \Omega^1_{\rm hor}(\mathcal{G},\mathfrak{p}_{+})$ such that $\omega'+\varphi$ is the normal projective Cartan connection. Since $\mathfrak{p}_+\subseteq \wt{\mathfrak{p}}$ the induced conformal structure on $\wt{M}$ agrees with the original conformal structure. Thus, the Fef\/ferman-type construction (with normalisation) and the described factorisation are (locally) inverse to each other.
\end{proof}

For a reformulation of the characterisation theorem in terms of underlying objects, see Section~\ref{alternative}.

\section{Reduced scales and explicit normalisation}\label{reducedscales}

Although we obtained the desired characterisation in Theorem \ref{thm-char-trac}, we do not yet know the explicit relationship between the induced Cartan connection form $\wt{\om}^{\rm ind}$ and the normal conformal Cartan connection form $\wt{\om}^{\rm nor}$. One of the aims of the present section is to obtain a formula for this dif\/ference, which is achieved in Theorem~\ref{thm-relation}. As a consequence, we also obtain an explicit formula for the curvature $\wt{\Om}^{\rm ind}$ in terms of the normal conformal Cartan curvature $\wt{\Om}^{\rm nor}$ in Corollary~\ref{cor-curv}.
In this more ref\/ined analysis, reduced scales will play an important role.

\subsection{Characterisation of reduced scales}
The notion of reduced Weyl structures and reduced scales is introduced in Section~\ref{Relpartra}. Here we shall f\/ind an intrinsic characterisation (i.e., using the conformal structure only) of reduced scales and discuss their properties.

As the scale bundle on the projective manifold $M$ we may consider the positive elements in the density bundle~$\mbb{E}(1)$, which is the projecting part of the dual standard tractor bundle~$\mc{T}^*$, see Section~\ref{section-projective}. Similarly, on the Fef\/ferman space $\wt{M}$ we take the positive elements in the density bundle~$\wt{\mbb{E}}(1)$, the projecting part of the conformal standard tractor bundle~$\wt{\mc{T}}$. Hence for a~projective scale $\rho \in \Ga(\mbb{E}_+(1))$ we have the tractor $L_0^{\mc{T}^*}(\rho) \in \Ga(\mc{T}^*)$; similarly, for a conformal scale $\si \in \Ga\big(\wt{\mbb{E}}_+(1)\big)$ we have the tractor $L_0^{\wt{\mc{T}}}(\si) \in \Ga\big(\wt{\T}\big)$. These will be termed \emph{scale tractors}.

On the one hand, reduced scales correspond to the sections of $\mbb{E}_+(1)\to M$ seen as a subset of all sections of $\wt{\mbb{E}}_+(1)\to\wt{M}$, see Remark~\ref{rem-Weyl}. On the other hand, sections of $\mc{T}^*\to M$ are understood as specif\/ic sections of the bundle $\wt{\mc{F}}\to\wt{M}$, which is a subbundle in $\wt{\mc{T}}\to\wt{M}$, see the generalities in Section~\ref{Relpartra} and the setup of our construction in Section~\ref{construction}. It follows that these two natural inclusions commute with the BGG-splitting operators.

\begin{Lemma}\label{lem-commute}
The full arrows in the following diagram commute:
$$
\xymatrix{
\Ga(\mc{T}^*)\ \ar@{^{(}->}[r] \ar@{.>}[d]^{\Pi_0} & \Ga\big(\wt{\mc{T}}\big) \ar@{.>}[d]_{\wt{\Pi}_0} \\
\Ga(\mbb{E}_+(1))\ \ar@/^4mm/[u]^{L_0^{\mc{T}^*}} \ar@{^{(}->}[r] & \Ga\big(\wt{\mbb{E}}_+[1]\big). \ar@/_4mm/[u]_{L_0^{\wt{\mc{T}}}} \\
}
$$
\end{Lemma}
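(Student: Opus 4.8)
The plan is to establish the identity $L_0^{\wt{\mc{T}}}(\rho)=L_0^{\mc{T}^*}(\rho)$, where the right-hand side (the projective scale tractor) is regarded as a section of $\wt{\mc{T}}$ through the inclusion $\Ga(\mc{T}^*)\embed\Ga(\wt{\mc{T}})$ induced by $F\embed\rr^{n+1,n+1}$. I would prove this via the uniqueness characterisation \eqref{BGGsplitting} of the conformal splitting operator: $L_0^{\wt{\mc{T}}}(\rho)$ is the \emph{unique} $t\in\Ga(\wt{\mc{T}})$ with $\wt{\Pi}_0(t)=\rho$ and $\wt{\partial}^*\bigl(d^{\wt{\na}^{nor}}t\bigr)=0$. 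Writing $t$ for $L_0^{\mc{T}^*}(\rho)=\bigl(\begin{smallmatrix}D_A\rho\\ \rho\end{smallmatrix}\bigr)$ viewed inside $\Ga(\wt{\mc{T}})$, it then suffices to verify these two conditions for $t$.

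The projection condition $\wt{\Pi}_0(t)=\rho$ is the easy part: under $F\embed\rr^{n+1,n+1}$ the projecting quotient of $F$ maps to that of $\rr^{n+1,n+1}$, and by Lemma \ref{lem-Weyl} this is exactly the density inclusion $\Ga(\mbb{E}(1))\embed\Ga(\wt{\mbb{E}}[1])$. The substance is the normalisation condition, and here the main difficulty is that $L_0^{\wt{\mc{T}}}$ is normalised with respect to $\wt{\na}^{nor}$, whereas the compatibility of inclusions with tractor differentiation from Subsection \ref{Relpartra} concerns the \emph{induced} connection $\wt{\na}^{ind}$; for $n\ge 3$ the two genuinely differ. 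I would therefore treat $\wt{\na}^{ind}$ first and then account for the correction.

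For $\wt{\na}^{ind}$: by Subsection \ref{Relpartra} the induced connection restricts on $\Ga(\mc{T}^*)$ to $\na^{\mc{T}^*}$, so along $\G\embed\wt{\G}$ the frame function of $d^{\wt{\na}^{ind}}t$ is the image $\wt{\be}$ of the frame function $\be\in\La^1\p_+\otimes F$ of $d^{\na^{\mc{T}^*}}\bigl(L_0^{\mc{T}^*}\rho\bigr)$. As $L_0^{\mc{T}^*}\rho$ is a projective BGG splitting, $\partial^*\be=0$; applying Lemma \ref{forJosef} with $V=F\subseteq\wt{V}=\rr^{n+1,n+1}$ gives $\wt{\partial^*\be}-\wt{\partial}^*\wt{\be}\in\f\otimes\bigl(\La^2\bar{F}\bullet F\bigr)$, which vanishes since $\La^2\bar{F}\subseteq\La^2 F$ kills $F$. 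Hence $\wt{\partial}^*\bigl(d^{\wt{\na}^{ind}}t\bigr)=0$. By Proposition \ref{prop-normal} we have $d^{\wt{\na}^{nor}}t=d^{\wt{\na}^{ind}}t+\Psi^1\bullet t+\Psi^2\bullet t$ with $\Psi^1$ valued in $\La^2\bar{F}$ and $\Psi^2$ in $\f$. Again $\Psi^1\bullet t=0$ because $\La^2\bar{F}\bullet F=0$, while for $\Psi^2$ one uses \eqref{kers} and \eqref{eq-tp+p}: $\f\subseteq\ker s_F=\sl(n+1)\oplus\La^2 F$ acts on $F$ only through its $\sl(n+1)$-part $\p_+$, so $\wt{\partial}^*\bigl(\Psi^2\bullet t\bigr)$ is assembled from the iterated action $\p_+\bullet\p_+\bullet F$. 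This vanishes because the projective nilradical is $|1|$-graded, i.e.\ $\p_+\cdot\p_+=0$ as endomorphisms of the standard module. Thus $\wt{\partial}^*\bigl(d^{\wt{\na}^{nor}}t\bigr)=0$, and uniqueness of the BGG splitting gives $t=L_0^{\wt{\mc{T}}}(\rho)$, i.e.\ the square of full arrows commutes.

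The hard part, as indicated, is the passage from $\wt{\na}^{ind}$ to $\wt{\na}^{nor}$: the correction $\Psi$ is nonzero and acts nontrivially on $\wt{\mc{F}}$ through its $\p_+$-part, so one cannot simply discard it, and the whole argument rests on the two algebraic cancellations $\La^2\bar{F}\bullet F=0$ and $\p_+\cdot\p_+=0$ surviving the extra application of $\wt{\partial}^*$. I would also take care in identifying the frame function of $d^{\wt{\na}^{ind}}t$ with $\wt{\be}$ — this is where Subsection \ref{Relpartra} enters and where one must be sure no lower-order terms are dropped before Lemma \ref{forJosef} is invoked.
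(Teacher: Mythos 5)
Your proof is correct and takes essentially the same route as the paper: both verify the two defining conditions of the BGG splitting by writing $\wt{\na}^{nor}=\wt{\na}^{ind}+\Psi^1+\Psi^2$, handling the $\wt{\na}^{ind}$-part via Lemma \ref{forJosef} together with $\La^2 F\bullet F=0$, and disposing of the corrections through $\La^2\bar{F}\bullet F=0$ and the vanishing of $\wt{\partial}^*(\Psi^2\bullet t)$. Your only addition is a more explicit justification of the latter vanishing (the paper states it tersely); note that the reason $\p_+\cdot\p_+=0$ on the standard module is the two-step filtration of $\rr^{n+1}$ rather than the $|1|$-grading per se, but the fact you use is correct.
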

\begin{proof}
Consider a projective density $\rho \in \Ga(\mbb{E}_+(1))$ on $M$, the corresponding tractor $L_0^{\mc{T}^*}(\rho) \in \Ga(\mc{T}^*)$, and its extension to $\wt{\mc{F}}\subseteq\wt{\mc{T}}$, which is denoted by~${s}'$. The extension of $\rho \in \Ga(\mbb{E}_+(1))$ to~$\wt{\mbb{E}}_+[1]$ obviously coincides with the projection $\wt{\Pi}_0 (s')$, and it is denoted by~$\si$. We need to show that $s' = L_0^{\wt{\T}}(\si)$, i.e., that $\wt{\pa}^* \wt{\na}^{\rm nor} s'=0$. According to Proposition~\ref{prop-normal}, $\wt{\om}^{\rm nor}=\wt{\om}^{\rm ind}+\Psi^1+\Psi^2$ with
$\Psi^1\in\Om^1\big(\wt{M},\La^2 \wt{\overline{\mc{F}}}\big)$, $\Psi^2\in\Om^1\big(\wt{M},\wtf\big)$, hence we have
\begin{gather*}
 \wt{\na}^{\rm nor}s'=\wt{\na}^{\rm ind}s'+\Psi^1\bdot s'+\Psi^2\bdot s'.
\end{gather*}
Since $\La^2 \wt{\overline{\mc{F}}}$ acts trivially on $\wt{\mc{F}}\subseteq \wt{\T}$, we have $\Psi^1\bdot s'=0$. Since $\wtf\subseteq T^*\wt{M}$, it follows that $\wt{\del}^*\big(\Psi^2\bdot s'\big)=0$. It thus follows that $\wt{\del}^*\big(\wt{\na}^{\rm nor}s'\big)=\wt{\del}^*\big(\wt{\na}^{\rm ind}s'\big)$. Let $\phi$ be the frame form of~$\na L_0^{\mc{T}^*}(\rho)$. Then, according to Lemma~\ref{forJosef}, we have that $\wt{\del}^*\wt{\phi}=0$ since $\La^2F\bdot F=0$, and in particular $\wt{\del}^*\big(\wt{\na}^{\rm nor}s'\big)=0$.
\end{proof}

We can now characterise reduced scales in terms of the corresponding scale tractors:

\begin{Proposition}\label{prop-nnscales} Let $\big(\wt{M},\mb{c}\big)$ be a conformal spin structure associated to an oriented projective structure $(M,\mb{p})$ via the Fefferman-type construction. Let $\si \in \Ga\big(\wt{\mbb{E}}_+[1]\big)$ be a conformal scale and let $s := L_0^{\wt{\T}}(\si) \in\Ga\big(\wt{\T}\big)$ be the corresponding scale tractor. Then the following statements are equivalent:
\begin{enumerate}\itemsep=0pt
\item[$(a)$] The scale $\si$ is reduced.
\item[$(b)$] The tractor $s$ is a section of $\wt{\mc{F}} \subseteq \wt{\T}$.
\item[$(c)$] The twistor spinor $\chi$ is parallel with respect to the Levi-Civita connection $\wt{D}$ of the metric corresponding to the scale $\si$.
\end{enumerate}
Furthermore, in a reduced scale, the Schouten tensor is strictly horizontal, i.e., it satisfies
\begin{gather}\label{rhocondition}
v^a\wt{\Rho}_{ab}=0, \qquad \text{for all $v^a\in \Ga(\ker\chi)$},
\end{gather}
and the scalar curvature $\wt{J}$ vanishes.
\end{Proposition}

\begin{proof} (a) $\Longrightarrow$ (b): This follows from def\/initions and Lemma~\ref{lem-commute}.

(b) $\Longrightarrow$ (c): The condition (b) means that $s \cdot \mb{s}_F =0$. According to \eqref{splitStd}, \eqref{twisplit} and \eqref{traCli}, this condition expanded in slots yields
\begin{gather*}
s \cdot \mb{s}_F = L_0^{\wt{\mc{T}}}(\si) \cdot L_0^{\wt{\mc{S}}}(\chi)
= \begin{pmatrix} -\frac{1}{2n}\wt{J}\si \\ 0 \\ \si \end{pmatrix}
 \cdot \begin{pmatrix} \frac{1}{n\sqrt{2}}\crd\chi \\ \chi \end{pmatrix}
= \begin{pmatrix} -\frac{\sqrt{2}}{2n}\wt{J} \chi \si \vspace{1mm}\\ -\frac{1}{n}\crd\chi \si \end{pmatrix}
= \begin{pmatrix} 0 \\ 0 \end{pmatrix},
\end{gather*}
where we use the Levi-Civita connection $\wt D$ corresponding to $\si$. In particular, $\crd\chi=0$ and, since~$\chi$ is a twistor spinor, the condition (c) follows.

(c) $\Longrightarrow$ (b): The condition (c) yields $\mb{s}_F = \smat{0 \\ \chi}$ according to \eqref{twisplit}. The fact that $\wt{\nabla}_a \mb{s}_F=0$ yields $\wt{\Rho}_{ac}\ga^c \chi =0$ according to \eqref{traCli}. Hence \eqref{rhocondition} holds, which in particular means $\wt{J}=0$. Summarising, we have
\begin{gather}\label{ssFslots}
s=L_0^{\wt{\mc{T}}}(\si)=
\begin{pmatrix}
0 \\ 0 \\ \si
\end{pmatrix}
\qquad\text{and}\qquad
\mb{s}_F=L_0^{\wt{\mc{S}}}(\chi)=
\begin{pmatrix}
0 \\ \chi
\end{pmatrix}.
\end{gather}
Hence $s \cdot \mb{s}_F=0$ and the condition (b) follows.

(b) and (c) $\Longrightarrow$ (a): According to the previous reasoning and \eqref{constd}, we have
\begin{gather*}
\wt{\na}_a^{\rm nor} s=
\begin{pmatrix}
0 \\ \si\wt{\Rho}_{ab} \\ 0
\end{pmatrix}.
\end{gather*}
Hence $\wt{\na}^{\rm nor} s$ is strictly horizontal, i.e., $v^a \wt{\na}^{\rm nor}_a s =0$ for every $v^a\in\Ga(\ker\chi)$. Since $\wt{\na}^{\rm nor}=\wt{\na}^{\rm ind}+\Psi$ and $\Psi$ is horizontal, the horizontality of $\wt{\na}^{\rm nor} s$ is equivalent to the horizontality of~$\wt{\na}^{\rm ind} s$. Altogether, the condition~(a) follows from Proposition~\ref{mini} and Lemma~\ref{lem-commute}.
\end{proof}

We will need some f\/iner discussion on the slots of the distinguished tractor
\begin{gather} \label{Kslots}
\mb{K} = \begin{pmatrix} \rh_a \\ \mu_{ab} \, | \, \varphi \\ k_a \end{pmatrix} \in \Ga\big({\mc{A}\wt{M}}\big)
\end{gather}
in reduced scales. From Proposition \ref{killing} we know that $\mb{K}$ is the BGG-splitting $L_0^{\mc{A}\wt{M}}(k)$, which in particular means that $\mu_{ab} = \wt{D}_{[a}k_{b]}$ and $\varphi = -\frac{1}{2n} \wt{D}^rk_r$ according to~\eqref{L0La2}. However, in the following statement we only exploit the algebraic properties of $\mb{K}$, namely, that it acts by minus and plus the identity on $\wt{\mc{F}}$ and $\wt{\mc{E}}$, respectively.

\begin{Lemma}\label{Kslotsr}
Let us fix a reduced scale. Then the expression of $\mb{K}$ as in \eqref{Kslots} satisfy $\rh_a=0$, $\ph=-1$, $\mu_a{}^r v_r = -v_a$ for every $v^a \in \Ga(\ker\chi)$ and $\mu_a{}^r\mu_{rb} = g_{ab}$. Further we have $\mu_{ab} = \langle \ga_{[a} \ga_{b]} \chi, \bar{\eta} \rangle$ for some $\bar{\eta} \in \Ga\big(\wt{\Sigma}_\mp\big[{-}\tfrac12\big]\big)$.
\end{Lemma}

\begin{proof} Firstly, we use $\mb{K} \bullet s = -s$ for any $s \in \Ga\big(\wt{\mathcal{F}}\big)$. The scale tractor $s=L_0^{\wt{\mc{T}}}(\si)$ of a reduced scale $\si$ is a section of $\wt{\mc{F}}$ and it has the form as in \eqref{ssFslots}. Thus it follows from \eqref{adjact} that $\rh_a=0$ and $\ph=-1$. Next, for every $v^a \in \Ga(\ker\chi)$, the tractor $s=\smat{0\\v_a\\0}$ is clearly a section of $\wt{\mc{F}}$, since $s \cdot \mb{s}_F=0$. Thus it follows from \eqref{adjact} that $\mu_a{}^r v_r = -v_a$.

Secondly, we use $\mb{K} \bullet s = s$ for any $s \in \Ga\big(\wt{\mathcal{E}}\big)$. Considering the tractor $s=\smat{0\\\om_a\\0}$ with arbitrary $\om^a \in \Ga\big(T\wt{M}\big)$, the tractor $\bar{s} := s + \mb{K} \bullet s$ is a section of $\wt{\mathcal{E}}$, whose middle slot is $\om_a + \mu_a{}^r \om_r$. It follows again from \eqref{adjact} that $\mu_a{}^r\mu_{rb} = g_{ab}$.

Thirdly, we use \eqref{eq-K-sE-sF} which shows how $\mb{K}$ is built from $\mb{s}_E$ and~$\mb{s}_F$. Since the top slot of~$\mb{s}_F$ vanishes, middle slots of $\mb{K}$ are given by a suitable tensor product of $\chi$ and the top slot of~$\mb{s}_E$.
\end{proof}

We will also need more properties of conformal curvature quantities in reduced scales.

\begin{Lemma}\label{moreconditions} In a reduced scale,
\begin{gather}
\widetilde{W}_{abcd} \mu^{cd} = 0 , \qquad \mbox{where} \ \mu_{ab} = \wt{D}_{[a}k_{b]} ,\label{kcondition} \\
v^c \widetilde{Y}_{abc} = 0 , \qquad \mbox{for all $v^a \in \Ga(\ker\chi)$.}\label{riccicondition}
\end{gather}
\end{Lemma}

\begin{proof} In slots, the condition $\wt{\Om}_{ab}^{\rm nor} \bullet \mb{s}_F =0$ implies that $\widetilde{W}_{abcd} \ga^c \ga^d \chi=0$. Pairing both sides of the latter equality with a spinor $\bar{\eta}$ from Lemma~\ref{Kslotsr} yields~\eqref{kcondition}.

Consider two arbitrary sections $v^a,w^b$ of $\wt{f}= \ker\chi$. Conditions \eqref{condition-weyl0} and \eqref{rhocondition} imply $\wt{R}_{abcd} v^a w^d = 0$. Now, inserting $v^a$ and $w^e$ into the Bianchi identity $\wt{D}_{[a} \wt{R}_{bc]de} = 0$, we obtain $v^a w^e \wt{D}_a \wt{R}_{bcde} =0 $,
where we used the fact that $\wt{f}$ is parallel. Since we can always regard~$g^{ab}$ as a~section of~$\wt{f} \otimes \wt{f}^*$, this implies
$0= g^{ec} v^a \wt{D}_a \wt{R}_{bcde} = v^a \wt{D}_a \wt{\Ric}_{bc}$ where $\wt{\Ric}_{bc}$ is the Ricci tensor of~$\wt{D}_a$. Since $\wt{J} =0$, we have that $\wt{\Rho}_{ab}$ is proportional to~$\wt{\Ric}_{ab}$ by a constant factor. Thus $v^a \wt{D}_a \wt{\Rho}_{bc} = 0$.
From~\eqref{rhocondition} and since $\wt{f}$ is parallel we also have $v^b \wt{D}_a \wt{\Rho}_{bc} =0$. Altogether, \eqref{riccicondition}~follows by the def\/inition of the Cotton tensor.
\end{proof}

\subsection{Explicit normalisation formula}\label{explicit}
So far we discussed three Cartan connections on the Fef\/ferman space $\wt{M}$: the induced one $\wt{\om}^{\rm ind}$ (Section~\ref{sec-feff}),
the corresponding normal one~$\wt{\om}^{\rm nor}$ (Section~\ref{nprocess}) and the modif\/ied auxiliary one $\wt{\om}'$ (Section~\ref{sec-backward}). Various properties of these and derived objects are enumerated in Propositions~\ref{prop-properties} and~\ref{Omti'}. The following proposition ref\/ines the integrability conditions included there.

\begin{Proposition} Let $\big(\wt{M},\mb{c}\big)$ be the conformal spin structure induced from an oriented projective structure $(M,\mb{p})$ via the Fefferman-type construction. Then, along the reduction $\G\embed \wt{\G}$,
\begin{gather} \label{integrability-refin2}
 \io_X\wt{\ka}^{\rm nor}(u) \in \f\t\La^2\bar{F}, \qquad \mbox{for all}\ X \in f,\ u\in\G,\\
\label{integrability-refin3} \io_X\wt{\ka}'(u)=0, \qquad \mbox{for all}\ X \in f,\ u\in\G.
\end{gather}
\end{Proposition}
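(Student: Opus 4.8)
The plan is to derive both refinements from the already-established integrability condition \eqref{integrability-cond}, together with the Cotton-tensor identity \eqref{riccicondition}, working along the $Q$-reduction $\G\embed\wt\G$ in a reduced scale; there, by Proposition \ref{prop-nnscales}, the twistor spinor $\chi$ is parallel, the Schouten tensor is horizontal, and $\wt J$ vanishes. The essential point is to track, along $\G$, how the two $Q$-invariant summands occurring in \eqref{integrability-cond} sit inside the conformal grading $\ti\g=\ti\g_{-1}\oplus\ti\g_0\oplus\ti\g_{+1}$: by \eqref{useful} one has $\La^2\bar F\subseteq\ti\g_0$, whereas $\f\subseteq\ti\p_+=\ti\g_{+1}$. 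Hence the value of $\io_X\wt\ka^{nor}(u)$ splits into a $\ti\g_0$-piece lying in $\La^2\bar F$ and a $\ti\p_+$-piece lying in $\f$, and under the description \eqref{curvature} of the curvature form the former is governed by the Weyl tensor $\wt W$ while the latter is governed by the Cotton tensor $\wt Y$.

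To establish \eqref{integrability-refin2} I would show that the $\f$-component, i.e.\ the $\ti\p_+$-part of $\io_X\wt\ka^{nor}$, vanishes. For $X\in f$ corresponding to $v\in\Ga(\ker\chi)$ this component is, in the chosen scale, the insertion of $v$ into the top (Cotton) slot of \eqref{curvature}, namely $v^{c_0}\wt Y_{ac_0c_1}$. By the skew symmetry of $\wt Y$ in its last two indices and the identity \eqref{riccicondition}, $v^c\wt Y_{abc}=0$, this contraction vanishes — exactly as in the final step of the proof of Lemma \ref{lem-weyl}. Consequently only the $\La^2\bar F$-component, already controlled by the Weyl condition \eqref{condition-weyl0}, survives, and we obtain $\io_X\wt\ka^{nor}(u)\in\f\t\La^2\bar F$.

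Equation \eqref{integrability-refin3} then follows algebraically. By \eqref{eqcur1} we have $\wt\ka'=(\wt\ka^{nor})_{\sl(n+1)}$, and since insertion into the curvature slots commutes with projection onto the value, $\io_X\wt\ka'=(\io_X\wt\ka^{nor})_{\sl(n+1)}$. The value of $\io_X\wt\ka^{nor}$ now lies in $\La^2\bar F\subseteq\La^2 F$, and $\La^2 F$ is a $\g$-module summand of $\ti\g$ complementary to $\sl(n+1)$ in the decomposition \eqref{tig-decomp}, so $(\La^2 F)_{\sl(n+1)}=0$. Therefore $\io_X\wt\ka'(u)=0$, which sharpens \eqref{eqcur2} to the asserted vanishing.

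The step I expect to require the most care is not the algebra but the bookkeeping in the second paragraph: one must verify that, in a reduced scale, the abstract $\ti\p_+$-part of $\io_X\wt\ka^{nor}$ is genuinely identified with the Cotton contraction appearing in \eqref{riccicondition}, and not contaminated by Weyl data. This is precisely where the reduced-scale normalisations of Proposition \ref{prop-nnscales} (horizontality of $\wt\Rho$, vanishing of $\wt J$, and $\wt D\chi=0$) are indispensable, and where the companion identity \eqref{kcondition} furnishes a useful consistency check through its relation to \eqref{OmK}.
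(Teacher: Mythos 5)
Your argument is correct and follows essentially the same route as the paper: starting from the already-established condition \eqref{integrability-cond}, you identify the $\f$-component of $\io_X\wt{\ka}^{nor}$ with the Cotton contraction $v^r\wt{Y}_{abr}$ in a reduced scale (where $\La^2\bar{F}$ contributes nothing to the $\ti\p_+$-slot) and kill it via \eqref{riccicondition}, then obtain \eqref{integrability-refin3} from $\wt{\ka}'=(\wt{\ka}^{nor})_{\sl(n+1)}$ together with $(\La^2 F)_{\sl(n+1)}=0$. The bookkeeping point you flag at the end is exactly the one the paper handles by noting that the top slot of sections of $\La^2\wt{\overline{\mc{F}}}$ vanishes in reduced scales.
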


\begin{proof} From \eqref{integrability-cond} we already know that $\io_X\wt{\ka}^{\rm nor}$ has values in $ \f\t \big(\La^2\bar{F}\oplus \f\big)$.
We note that the top slot of sections of $\La^2\wt{\overline{\mc{F}}}$ vanishes in reduced scales, cf.~\eqref{useful}. Thus the part in~$\f$ corresponds to~$v^r\wt{Y}_{abr}$ for a~$v\in\Ga(\wt{f})$, which however has to vanish by~\eqref{riccicondition}. Hence \eqref{integrability-refin2} follows. The last condition~\eqref{integrability-refin3} follows from $\wt{\ka}' = \big(\wt{\ka}^{\rm nor}\big)_{\sl(n+1)}$, cf.\ Proposition~\ref{Omti'}.
\end{proof}

Since $\wt{\om}'$ is an $\SL(n+1)$-connection on $\wt{\G}\to\wt{M}$, it is the extension of a Cartan connection~$\om'$, on~$\G\to\wt{M}$. Now, due to \eqref{integrability-refin3}, any section $v\in\Ga(\ker\chi)$ inserts trivially into its curvature. But this is the standard condition on the connection~$\om'$ to be a Cartan connection also on the bundle~$\G\to M$, i.e., to be a projective Cartan connection, cf.~\cite{cap-correspondence}.

Furthermore, we will show that the descended Cartan connection is normal, i.e., $\om'=\om$. To do this, we f\/irst compute $\wt{\pa}^* \wt{\ka}'$ and then use the relation between the co-dif\/ferentials $\pa^*$ on $M$ and $\wt{\pa}^*$ on $\wt{M}$ discussed in Lemma~\ref{forJosef}.

\begin{Proposition} \label{pa*Om'}
The curvature $\wt{\ka}'$ satisfies
\begin{gather}\label{integrability-refine}
 \wt{\pa}^* \wt{\ka}' (u) = \io_k \wt{\ka}^{\rm nor}(u) \in \f \t \La^2\bar{F}, \qquad \mbox{for\ all}\ u\in\G.
\end{gather}
\end{Proposition}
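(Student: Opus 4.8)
The plan is to leverage the normality of $\wt\om^{nor}$ together with the refined description of $\wt\ka^{nor}$ obtained in the immediately preceding proposition, so that the whole statement becomes a purely algebraic computation of a Kostant co-differential. First I would recall that $\wt{\pa}^*\wt\ka^{nor}=0$, and decompose $\wt\ka^{nor}$ along the reduction $\G\embed\wt\G$ according to its $\ti\g$-values: by \eqref{eq-curv1} together with torsion-freeness of the normal conformal connection, $\wt\ka^{nor}(u)$ takes values in $\p\oplus\La^2\bar F$, and by \eqref{eqcur1} the $\sl(n+1)$-part is precisely $\wt\ka'$. Writing $\beta:=(\wt\ka^{nor})_{\La^2\bar F}$ for the complementary, $\La^2\bar F$-valued piece, linearity of $\wt{\pa}^*$ and normality give $\wt{\pa}^*\wt\ka'(u)=-\wt{\pa}^*\beta(u)$, reducing everything to the co-differential of $\beta$.

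Next I would evaluate $\wt{\pa}^*\beta$ from the explicit formula \eqref{normal} (noting $\wt{\pa}^*_2=0$ since $\ti\g$ is $|1|$-graded) using the structural brackets from \eqref{useful}, namely $[\ti\p_+,\La^2\bar F]=\f$ and $[\f,\La^2\bar F]=0$; the second relation kills every term in which a $\f$-direction is contracted, and the first fixes the homogeneity of what survives. At this point the integrability conditions just established are essential: \eqref{integrability-refin3} ($\io_X\wt\ka'=0$ for $X\in f$) shows that $\io_k\wt\ka^{nor}=\io_k\beta$, and \eqref{integrability-refin2} guarantees that the final expression lands in $\f\t\La^2\bar F$. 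To match the outcome with $\io_k\wt\ka^{nor}$ itself, I would identify the relevant $\SL(n-1)$- (equivalently $Q$-) irreducible component with the one carrying $\phi^1$, recall from the proof of Proposition \ref{prop-normal} that the Kostant Laplacian $\wt\lapl$ acts by the scalar $2$ on the component $(f\odot\La^2 f)[-4]$, and invoke \eqref{Psi_proj}, i.e.\ $\io_k\wt\ka^{nor}=2\phi^1=\wt\lapl\,\phi^1$; this is what makes the normalisation constants cancel and produces exactly $\wt{\pa}^*\wt\ka'(u)=\io_k\wt\ka^{nor}(u)$. An equivalent route, which I would cross-check against the above, is to start directly from $\wt\om^{nor}=\wt\om'+\Psi^1$ and apply the general curvature-modification formula (3.1) of \cite{cap-slovak-book} to $\wt\om'$, using $\wt{\pa}^*\phi^1=0$ (as $\phi^1\in\im\wt{\pa}^*$) so that $\wt{\pa}^*\wt\del\phi^1=\wt\lapl\phi^1=2\phi^1$.

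The main obstacle I anticipate is the bookkeeping of the two tensor slots through the co-differential: one must track simultaneously the form-index (valued in $(\ti\g/\ti\p)^*\cong\ti\p_+$) and the coefficient-index (valued in $\ti\g$) under the $Q$-module identifications $\f\cong f[-2]$ and $\La^2\bar F\cong(\La^2 f)[-2]$, and verify that the antisymmetrisation built into $\wt{\pa}^*$ interacts correctly with the Bianchi-type symmetry of $\wt\ka^{nor}$ (exactly as exploited in Lemma \ref{lem-psi}), so that no spurious term of type $\f\t\f$ is generated and the correct numerical constant is obtained. Once this slot- and constant-matching is carried out, the containment $\wt{\pa}^*\wt\ka'(u)\in\f\t\La^2\bar F$ follows at once from \eqref{integrability-refin2}, completing the proof.
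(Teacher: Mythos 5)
Your opening reduction is the same as the paper's: normality of $\wt{\om}^{nor}$ gives $\wt{\pa}^*\wt{\ka}'=\tfrac12\wt{\pa}^*\bigl(K\bullet\wt{\ka}^{nor}\bigr)$, i.e.\ minus the co-differential of the complementary piece of $\wt{\ka}^{nor}$. The gap lies in how you then evaluate this. You declare that complementary piece $\beta$ to be $\La^2\bar F$-valued and propose to compute $\wt{\pa}^*\beta$ from the brackets $[\ti{\p}_+,\La^2\bar{F}]=\f$ and $[\f,\La^2\bar{F}]=0$ of \eqref{useful}. That premise is false: torsion-freeness places $\wt{\ka}^{nor}$ in $\ti{\p}$, but the two summands of its decomposition into $\sl(n+1)\oplus\La^2F$ do \emph{not} individually lie in $\ti{\p}$ --- the $\sl(n+1)$-part lies only in $\p$, and $\p\cap\ti\p=\q\subsetneq\p$ --- so $\beta=-\tfrac12 K\bullet\wt{\ka}^{nor}$ carries a nontrivial $\ti{\g}_-$-component, namely $-\tfrac12 k^r\wt{W}_{abrc}$ (exactly the torsion of $\wt{\om}^{ind}$ recorded in Corollary \ref{cor-curv}). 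It is precisely this torsion component, fed through $[\ti{\g}_-,\ti{\p}_+]\subseteq\ti{\g}_0$, that produces the $\La^2\bar{F}$-valued answer $\io_k\wt{\ka}^{nor}$; the co-differential of a genuinely $\La^2\bar{F}$-valued $2$-form would instead land in $\ti{\p}_+\otimes\f$, a module meeting $\f\otimes\La^2\bar{F}$ trivially, so your computation as described would miss the entire content of the statement.

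Two further points. The ``bookkeeping'' you defer is where the analytic input sits: showing that the $\f$-valued (top-slot) part of $\wt{\pa}^*(K\bullet\wt{\ka}^{nor})$ vanishes requires the reduced-scale identities $\wt{W}_{abcd}\mu^{cd}=0$ and $v^c\wt{Y}_{abc}=0$ of \eqref{kcondition}--\eqref{riccicondition}; these are differential consequences of $\wt{D}\chi=0$, not facts about the module structure, and they appear nowhere in your argument. Moreover, the constant-matching via $\wt\lapl\phi^1=2\phi^1$ together with \eqref{Psi_proj} is not a safe shortcut: the identification $\wt{\om}^{nor}=\wt{\om}'+\Psi^1$ underlying your alternative route presupposes $\Psi^2=0$, which is part of Theorem \ref{thm-relation} and is proved \emph{using} the present proposition, so that route is circular as stated; pushed through formula (3.1) it also leaves an $\f\otimes\f$ remainder (controlled only by Lemma \ref{lem-hor}) that you would still have to eliminate. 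The paper avoids all of this by computing $\wt{\pa}^*(\mb{K}\bullet\wt{\Om}^{nor})$ explicitly in slots from \eqref{L0La2} and \eqref{curvel}; that explicit computation is what your proof actually needs.
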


\begin{proof} We shall compute $\wt{\pa}^* \wt{\Om}'$ directly. First observe that using Proposition~\ref{Omti'} we have $\wt{\Om}' = \big(\wt{\Om}^{\rm nor}\big)_{\sl(n+1)} = \wt{\Om}^{\rm nor} + \frac12 \mb{K} \bullet \wt{\Om}^{\rm nor}$. Hence $\wt{\pa}^*\wt{\Om}' = \frac12 \wt{\pa}^* \big( \mb{K} \bullet \wt{\Om}^{\rm nor} \big)$, since $\wt{\pa}^* \wt{\Om}^{\rm nor}=0$. Using~\eqref{Kslots} and~\eqref{curvature}, we compute $\mb{K} \bullet \wt{\Om}^{\rm nor}_{ab}$ as
\begin{gather*}
\begin{pmatrix} \rh_c \\ \mu_{c_0c_1} \, | \, \varphi \\ k_c \end{pmatrix} \bullet
\begin{pmatrix} -\wt{Y}_{dab} \\ \wt{W}_{abd_0d_1} \,|\, 0 \\ 0 \end{pmatrix}
= \begin{pmatrix}
\rh^r\wt{W}_{abrc} - \mu_c{}^r \wt{Y}_{rab} + \ph \wt{Y}_{cab} \\
-2\wt{W}_{ab}{}^r{}_{[c_0}\mu_{c_1]r} + 2 k_{[c_0}\wt{Y}_{c_1]ab} \mid k^r\wt{Y}_{rab} \\
k^r\wt{W}_{abrc} \end{pmatrix}.
\end{gather*}
In a reduced scale, from the previous display together with Lemmas~\ref{Kslotsr} and~\ref{moreconditions} we compute
\begin{gather*}
\wt{\pa}^* \big(\mb{K} \bullet \wt{\Om}^{\rm nor}_{ab}\big)
= \begin{pmatrix} 0 \\ 2k^r\wt{W}_{rac_0c_1} \mid 0\\ 0 \end{pmatrix}
= 2 k^r \wt{\Om}^{\rm nor}_{ra},
\end{gather*}
which yields \eqref{integrability-refine}.
\end{proof}

\begin{Theorem}\label{thm-relation} Let $(\G,\om)$ be a projective normal Cartan geometry over $M$ and let $\big(\wt{\G},\wt{\om}^{\rm ind}\big)$ be the conformal Cartan geometry over $\wt{M}$ induced via the Fefferman-type construction. Then
\begin{enumerate}\itemsep=0pt
\item[$(a)$] $\wt{\om}^{\rm ind}=\wt{\om}'=\wt{\om}^{\rm nor}-\frac{1}{2}i_k\wt{\ka}^{\rm nor}$,
\item[$(b)$] $\wt{\om}^{\rm nor}=\wt{\om}^{\rm ind}+\Psi^1$, where $\Psi^1=-\frac{1}{2}\wt{\del}^*\wt{\ka}^{\rm ind}=\frac{1}{2}\io_k\wt{\ka}^{\rm nor}$.
\end{enumerate}
\end{Theorem}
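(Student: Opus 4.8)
The plan is to identify the auxiliary connection $\wt{\om}'$ of \eqref{omti'} with the induced connection $\wt{\om}^{ind}$; assertion (a) is then immediate from the definition of $\wt{\om}'$, while (b) follows by rearranging (a) against the decomposition $\wt{\om}^{nor}=\wt{\om}^{ind}+\Psi^1+\Psi^2$ of Proposition \ref{prop-normal}. The substance of the statement is twofold: the projective Cartan connection $\om'$ to which $\wt{\om}'$ descends is not merely projective but \emph{normal} — hence equal to $\om$ — and the second normalisation correction vanishes, $\Psi^2=0$.

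First I would collect the structural facts already in hand. By Proposition \ref{Omti'} the connection $\wt{\om}'$ has holonomy in $\SL(n+1)$ and restricts, along $\G\embed\wt{\G}$, to an $\sl(n+1)$-valued Cartan connection of type $(\SL(n+1),Q)$; by \eqref{integrability-refin3} the vertical directions $X\in f\cong\p/\q$ insert trivially into its curvature, so $\wt{\om}'$ descends to a projective Cartan connection $\om'$ on the leaf space $M$ of $\ker\chi$, with curvature $\ka'$. Along $\G$ the connection $\wt{\om}'$ is the Fefferman-type extension of $\om'$, so \eqref{katilde} identifies $\wt{\ka}'$ with the image of $\ka'$ under the inclusion $\Lambda^2\p_+\t\g\embed\Lambda^2\ti{\p}_+\t\ti{\g}$, i.e.\ with $\wt{\phi}$ for $\phi=\ka'$ in the notation of Lemma \ref{forJosef}.

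The heart of the argument, and the step I expect to be the main obstacle, is normality of $\om'$. Proposition \ref{pa*Om'} computes $\wt{\pa}^*\wt{\ka}'=\io_k\wt{\ka}^{nor}\in\f\t\Lambda^2\bar{F}$. I would then invoke the second assertion of Lemma \ref{forJosef} for the adjoint representation, according to which $\pa^*\ka'=0$ holds precisely when $\wt{\pa}^*\wt{\ka}'$ takes values in $\f\t\Lambda^2\bar{F}$. Since this is exactly the output of Proposition \ref{pa*Om'}, we conclude $\pa^*\ka'=0$, so $\om'$ is normal. The delicate point is the graded bookkeeping underlying Lemma \ref{forJosef}: one must be sure that landing in $\f\t\Lambda^2\bar{F}$ — rather than in some larger $\ti{\p}_+$-component — is the faithful transcription upstairs of the vanishing of $\pa^*$ on $M$, which rests on $\Lambda^2\bar{F}=\Lambda^2 F\cap\ti{\p}$ and $\Lambda^2 F\cap\g=0$.

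To finish, I would apply Lemma \ref{lem-same}: the pullbacks of $\wt{\om}'$ and $\wt{\om}^{ind}$ to $\G$ agree modulo $\p_+$, so $\om'$ and $\om$ agree modulo $\p_+$ and thus define the same projective structure $\mb{p}$. Being both normal, uniqueness of the normal projective Cartan connection forces $\om'=\om$; their Fefferman extensions then coincide, $\wt{\om}^{ind}=\wt{\om}'$, which together with \eqref{omti'} is assertion (a). For (b), substituting (a) into $\wt{\om}^{nor}=\wt{\om}^{ind}+\Psi^1+\Psi^2$ yields $\Psi^1+\Psi^2=\tfrac12\io_k\wt{\ka}^{nor}$; comparing this with \eqref{Psi_proj} and using $\f\cap\Lambda^2 F=0$ (immediate from the grading recorded in \eqref{useful}) forces $\Psi^2=0$. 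Hence $\wt{\om}^{nor}=\wt{\om}^{ind}+\Psi^1$ with $\Psi^1=-\tfrac12\wt{\pa}^*\wt{\ka}^{ind}$, and the identification of $\Psi^1$ with the stated multiple of $\io_k\wt{\ka}^{nor}$ is read off from \eqref{Psi_proj}.
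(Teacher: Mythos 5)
Your proposal is correct and follows essentially the same route as the paper: part (a) rests on exactly the same chain (the descent of $\wt{\om}'$ via \eqref{integrability-refin3}, then Proposition \ref{pa*Om'} combined with the second assertion of Lemma \ref{forJosef} to get $\pa^*\ka'=0$, then uniqueness of the normal projective connection via Lemma \ref{lem-same}), which you spell out in more detail than the paper does. For (b) you extract $\Psi^2=0$ by comparing $\Psi^1+\Psi^2=\tfrac12\io_k\wt{\ka}^{nor}$ with \eqref{Psi_proj} and the algebraic fact $\f\cap\La^2F=0$, whereas the paper observes that the difference $\wt{\om}^{nor}-\wt{\om}^{ind}$ is already realised by the first normalisation step; these are equivalent and equally valid.
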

\begin{proof} (a) We use that $i_X\wt\ka'=0$ for all $X\in f$ according to~\eqref{integrability-refin3}. Then Proposition~\ref{pa*Om'} together with Lemma~\ref{forJosef} imply that $\delstar\ka'=0$. Thus $\om'$ is projectively normal, and therefore we obtain $\wt{\om}'=\wt{\om}^{\rm ind}$.

(b) The normalisation process of Proposition \ref{prop-normal} provides $\Psi=\Psi^1+\Psi^2$ such that $\wt{\om}^{\rm nor} = \wt{\om}^{\rm ind} + \Psi$, where $\Psi^1$, $\Psi^2$ are the f\/irst and second normalisation steps. However since $\wt{\om}' = \wt{\om}^{\rm ind}$, it follows from Proposition~\ref{pa*Om'} and~\eqref{omti'} that $\wt{\pa}^* \wt{\ka}'=\wt{\pa}^* \wt{\ka}^{\rm ind}$ is, up to a constant multiple, the dif\/ference between $\wt{\om}^{\rm nor}$ and $\wt{\om}^{\rm ind}$. Therefore already the f\/irst normalisation step completes the normalisation, i.e., $\Psi^2=0$.
\end{proof}

Using the explicit relationship provided in Theorem \ref{thm-relation} we can also obtain a detailed description of the dif\/ference between the induced and the normal Cartan curvatures:
\begin{Corollary}\label{cor-curv} In a reduced scale, we have the following relation between the curvatures of the induced and the normal conformal Cartan connection:
\begin{gather}\label{IndCartanCurv}
\wt{\Om}^{\rm ind}_{ab} = \wt{\Om}^{\rm nor}_{ab} + \frac12 \mb{K} \bullet \wt{\Om}^{\rm nor}_{ab}=
\begin{pmatrix}
-\wt{Y}_{cab} \\ \wt{W}_{abc_0c_1} - \wt{W}_{ab}{}^r{}_{[c_0}\mu_{c_1]r} + k_{[c_0} \wt{Y}_{c_1]ab} \mid 0 \\
\frac12 k^r \wt{W}_{abrc}
\end{pmatrix}.
\end{gather}
In particular, $\frac12 \io_k \wt{W}$ is the torsion of the induced Cartan connection $\wt{\om}^{\rm ind}$.
\end{Corollary}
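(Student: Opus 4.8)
The plan is to read the formula off from the two identifications already available and then to simplify slot by slot in a reduced scale. By Theorem~\ref{thm-relation}(a) we have $\wt{\om}^{ind}=\wt{\om}'$, hence $\wt{\Om}^{ind}=\wt{\Om}'$, and \eqref{currel} of Proposition~\ref{Omti'} then yields the first equality $\wt{\Om}^{ind}_{ab}=\wt{\Om}^{nor}_{ab}+\tfrac12\mb{K}\bullet\wt{\Om}^{nor}_{ab}$. Its invariant content is worth isolating: since $\mb{K}$ acts by $\pm1$ on $\wt{\mc{E}},\wt{\mc{F}}$, the operator $\mathrm{id}+\tfrac12\mathrm{ad}(\mb{K})$ has eigenvalues $2,1,0$ on $\La^2\wt{\mc{E}}$, $\wt{\mc{E}}\otimes\wt{\mc{F}}$ and $\La^2\wt{\mc{F}}$ respectively (cf.\ \eqref{commK}); as $\wt{\Om}^{nor}$ has neither a $\La^2\wt{\mc{E}}$- nor an $(E\otimes F)_{Tr}$-component (by \eqref{eq-curv1} and \eqref{OmK}), this operator acts as the projection onto $(\wt{\mc{E}}\otimes\wt{\mc{F}})_0$. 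Thus $\wt{\Om}^{ind}$ is exactly $\wt{\Om}^{nor}$ with its $\La^2\wt{\mc{F}}$-component removed.

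To obtain the explicit slots I would insert the expression for $\mb{K}\bullet\wt{\Om}^{nor}_{ab}$ computed in the proof of Proposition~\ref{pa*Om'} (from the adjoint action \eqref{adjact} and the form \eqref{Kslots} of $\mb{K}$) and add $\wt{\Om}^{nor}_{ab}$ in the slot form \eqref{curvature}. The bottom ($\ti\g_{-1}$) slot is then $\tfrac12 k^r\wt{W}_{abrc}$ immediately; the $\ph$-entry of the middle slot is $\tfrac12 k^r\wt{Y}_{rab}$, which vanishes since $k\in\Ga(\ker\chi)$ and \eqref{riccicondition} give $k^r\wt{Y}_{rab}=0$; and the $\mu$-entry of the middle slot is $\wt{W}_{abc_0c_1}-\wt{W}_{ab}{}^r{}_{[c_0}\mu_{c_1]r}+k_{[c_0}\wt{Y}_{c_1]ab}$ with no further simplification needed.

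The only delicate point is the top ($\ti\p_+$) slot, for which the naive sum $-\wt{Y}_{cab}+\tfrac12(\rh^r\wt{W}_{abrc}-\mu_c{}^r\wt{Y}_{rab}+\ph\wt{Y}_{cab})$ must be shown to reduce to $-\wt{Y}_{cab}$; note that the correction does not vanish term by term—e.g.\ the summand $\ph\,\wt{Y}_{cab}$ is in general nonzero—so a genuine cancellation is involved. Rather than proving that cancellation directly, I would argue structurally from the first step: $\wt{\Om}^{ind}$ and $\wt{\Om}^{nor}$ differ precisely by the $\La^2\wt{\mc{F}}$-component, and by \eqref{useful} one has $\La^2 F\cap\ti\p=\La^2\bar{F}\subseteq\ti\g_0$, whence $\La^2 F\cap\ti\p_+=0$; since the reduced Weyl structure is built from $\g=\sl(n+1)$-data and $\g$ preserves $F$, the subspace $\La^2 F$ is compatible with the reduced grading and hence lies in $\ti\g_0\oplus\ti\g_{-1}$, carrying no $\ti\p_+$-part. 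Consequently the top slots of $\wt{\Om}^{ind}$ and $\wt{\Om}^{nor}$ coincide and equal $-\wt{Y}_{cab}$, which is consistent since $\wt{Y}_{cab}\in\f$ in its first index by \eqref{riccicondition}. I expect this disentangling of the $\mb{K}$-eigenspace splitting from the conformal grading to be the main obstacle. Once the slots are in hand the final assertion is immediate: the $\ti\g_{-1}$-slot is by definition the torsion, so the torsion of $\wt{\om}^{ind}$ is $\tfrac12 k^r\wt{W}_{abrc}=\tfrac12\io_k\wt{W}$, which is nonzero precisely because $\wt{\om}^{ind}$ is non-normal for $n\ge 3$.
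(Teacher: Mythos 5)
Your proposal is correct and its backbone coincides with the paper's: both obtain the first equality from Theorem~\ref{thm-relation}(a) together with \eqref{currel}, and both read the lower slots off the computation of $\mb{K}\bullet\wt{\Om}^{nor}$ already carried out in the proof of Proposition~\ref{pa*Om'}. The divergence is in the top slot. The paper simply substitutes the reduced-scale values $\rh_a=0$, $\ph=-1$ and $\mu\ind{^a_b}v^b=-v^a$ for $v\in\Ga(\ker\chi)$ from Lemma~\ref{lem-alg-prop-red}, and uses that $\wt{Y}\ind{^r_{ab}}$ lies in $\ker\chi$ to get the cancellation $-\mu_c{}^r\wt{Y}_{rab}-\wt{Y}_{cab}=0$ directly; you instead argue structurally that the whole difference $-(\wt{\Om}^{nor})_{\La^2\wt{\mc{F}}}$ carries no $\ti\p_+$-slot. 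Your conclusion is right and arguably more transparent, but the justification you give is the weak link: $\La^2F\cap\ti\p_+=0$ only excludes elements lying \emph{entirely} in $\ti\p_+$, not a nonzero $\ti\p_+$-component, and ``$\g$ preserves $F$'' does not by itself make $\La^2F$ a graded subspace for the reduced grading. The fact you actually need is that a reduced scale satisfies $\bar\chi=0$ (Lemma~\ref{lem-alg-prop-red}, Proposition~\ref{prop-nnscales}(d)), so $\wt{\mc{F}}=\ker\mb{s}_F$ has vanishing top slot in \eqref{slotsstd} and hence $\La^2\wt{\mc{F}}$ has vanishing $\wt{\mbb{E}}_a[0]$- and $\wt{\mbb{E}}$-slots in \eqref{slotsad}; equivalently, the opposite isotropic line of the reduced grading lies in $F$, so $\La^2F$ annihilates it and is therefore contained in $\ti\g_{-1}\oplus\ti\g_0$. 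With that supplied your argument closes. A minor point: for the trace entry you need the first-index contraction $k^r\wt{Y}_{rab}=0$, which is \eqref{eq-int-twistor-spinor2.2} rather than \eqref{riccicondition}; the two are interchangeable via the algebraic Bianchi symmetry of $\wt{Y}$, but the citation should be adjusted.
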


\begin{proof}We obtained the concrete expression of $\mb{K} \bullet \wt{\Om}^{\rm nor}$ in the proof of Proposition~\ref{pa*Om'}. Now Lemmas~\ref{Kslotsr} and~\ref{moreconditions}, and a~short computation yields~\eqref{IndCartanCurv}.
\end{proof}

\section[Comparison with Patterson--Walker metrics and alternative characterisation]{Comparison with Patterson--Walker metrics\\ and alternative characterisation}\label{Comparing}

In this section we will show that the Fef\/ferman-type construction studied in this article is closely related to the construction of so-called \emph{Patterson--Walker metrics}. These are the Riemann extensions of af\/f\/ine connected spaces, f\/irstly described in~\cite{patterson-walker}. A~conformal version of this construction was obtained by~\cite{dunajski-tod} for dimension $n=2$, and was treated by the authors of the present article in general dimension in~\cite{hsstz-walker}.

\subsection{Comparison}\label{comparison}
Let $M$ be a smooth manifold and $p\colon T^*M\to M$ its cotangent bundle. The vertical subbundle $V\subseteq T(T^*M)$ of this projection is canonically isomorphic to $T^*M$. An af\/f\/ine connection~$D$ on~$M$ determines a complementary horizontal distribution $H\subseteq T(T^*M)$ that is isomorphic to~$TM$ via the tangent map of~$p$.

\begin{Definition}\label{walker} The \emph{Riemann extension} or the \emph{Patterson--Walker metric} associated to a~tor\-sion-free af\/f\/ine connection~$D$ on~$M$ is the pseudo-Riemannian metric~$g$ on~$T^* M$ fully determined by the following conditions:
 \begin{enumerate}\itemsep=0pt
 \item[(a)] both $V$ and $H$ are isotropic with respect to $g$,
 \item[(b)] the value of $g$ with one entry from $V$ and another entry from $H$ is given by the natural pairing between $V\cong T^*M$ and $H\cong TM$.
 \end{enumerate}
\end{Definition}
It follows that $V$ is parallel with respect to the Levi-Civita connection of the just constructed metric. Hence Patterson--Walker metrics are special cases of Walker metrics, i.e., metrics admitting a parallel isotropic distribution. The explicit description of the metric~$g$ in terms of the Christof\/fel symbols of~$D$ can be found in~\cite{hsstz-walker, patterson-walker}.

The previous def\/inition can be adapted to weighted cotangent bundles $T^*M(w)=T^*M\otimes\mbb{E}(w)$, provided that $M$ is oriented and $D$ is special, i.e., preserving a volume form on $M$, which allows a trivialisation of $\mbb E(w)$. It turns out that Patterson--Walker metrics induced by projectively equivalent connections are conformally equivalent if and only if $w=2$ (interpreted as a projective weight according to the conventions from Section~\ref{section-projective}). Altogether, we have a natural split-signature conformal structure on $T^*M(2)$ induced by an oriented projective structure $(M,\mb{p})$.

From Section \ref{sec-feff} we know that $\wt{M}=T^*M(2)\setminus\{0\}$ is the Fef\/ferman space. Special af\/f\/ine connections from $\mb{p}$ are just the exact Weyl connections of the corresponding parabolic geometry. The corresponding objects on $\wt M$ are the reduced Weyl connections, respectively reduced scales, which correspond to distinguished metrics in the conformal class, see Section~\ref{Relpartra}. We are going to show that these metrics are just the Patterson--Walker metrics.

\begin{Proposition} \label{prop-feff-walker}
Let $\big(\wt M,\mb c\big)$ be the conformal structure of signature $(n,n)$ associated to an $n$-dimensional projective structure $(M,\mb p)$ via the
Fefferman-type construction. Then any metric in $\mb c$ corresponding to a reduced scale is a~Patterson--Walker metric.
\end{Proposition}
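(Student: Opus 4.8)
The plan is to characterise the Patterson--Walker metric by its two defining properties in Definition \ref{walker} and to verify that, in a reduced scale, the distinguished metric $g\in\mb{c}$ satisfies exactly these properties with respect to the canonical identifications on $\wt{M}=T^*M(2)\setminus\{0\}$. First I would fix a reduced scale, equivalently a special affine connection $D\in\mb{p}$, and let $g\in\mb{c}$ be the corresponding metric. By Proposition \ref{prop-nnscales} we may work in this scale with $\wt{D}\chi=0$, strictly horizontal Schouten tensor, and $\wt{J}=0$. The first observation is that the vertical subbundle $V\subseteq T\wt{M}$ of the projection $p:\wt{M}\to M$ coincides with $\wt{f}=\ker\chi$ by Proposition \ref{prop-objects}(d); since $\wt{f}$ is maximally isotropic this establishes condition (a) of Definition \ref{walker} for $V$, and together with $\wt{D}\chi=0$ it exhibits $V$ as the parallel isotropic distribution canonically isomorphic to $T^*M(2)$.

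Next I would identify the horizontal distribution $H$ determined by $D$. A reduced Weyl structure is a reduction to $Q_0=G_0\cap Q$, and via the $|1|$-grading $\g=\g_{-1}\oplus\g_0\oplus\g_1$ it furnishes a $Q_0$-invariant complement to $f=\p/\q$ inside $\g/\q\cong\ti\g/\ti\p$. Under $T\wt{M}\cong\G\x_Q(\g/\q)$ this complement is the horizontal distribution $H$, and $dp$ identifies it with $\g/\p\cong\g_{-1}\cong TM$. There are then two points to settle: first, that this Cartan-geometric horizontal distribution agrees with the classical horizontal lift of $D$ used in Definition \ref{walker}, which is the standard compatibility between Weyl structures on a correspondence space and the underlying affine connection; second, that $H$ is isotropic for the conformal metric. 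The latter is purely algebraic: realising the conformal metric as the restriction of $h$ to $L^\perp/L$ (Subsection \ref{homogen}) and using the matrix realisations of Appendix \ref{matrices}, one evaluates $h$ on representatives of the $\g_{-1}$-directions and checks that the corresponding block vanishes, giving condition (a) for $H$ and, by complementarity and nondegeneracy, maximal isotropy.

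The remaining point is condition (b): the value of $g$ with one argument in $V$ and one in $H$ must be the natural pairing between $V\cong T^*M(2)$ and $H\cong TM$. This reduces to evaluating $h$ on a pair of representatives, one from the vertical block $f$ and one from the horizontal $\g_{-1}$-block, where the decomposition $\rr^{n+1,n+1}=E\oplus F$ makes $E$ and $F$ dual via $h$. Because the $Q$-invariant objects $e,f$ carry the density twist $[1]$ (Subsection \ref{homogen}) and the projective weights transfer as in Lemma \ref{lem-Weyl}, the resulting pairing is precisely the canonical contraction $T^*M(2)\x TM\to\rr$ in the chosen scale, with no spurious factor. Having verified (a) for both $V$ and $H$ and (b) for the off-diagonal pairing, the metric $g$ satisfies the conditions that determine the Patterson--Walker metric of $D$ uniquely, so the two coincide.

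The main obstacle I anticipate lies in the last two steps, namely the normalisation bookkeeping: matching the Cartan-geometric horizontal distribution with the affine horizontal lift, and tracking the density weights (the twist $[1]$, the weight-$2$ twist on the cotangent bundle, and the normalisation $\langle s_E,s_F\rangle=-\tfrac12$) so that the off-diagonal block is the natural pairing on the nose rather than a nonzero multiple of it. Isotropy of $V$ and $H$ is comparatively soft, following from the maximal isotropy of $e$ and $f$, but pinning down the exact pairing requires the explicit realisations of Appendix \ref{matrices}.
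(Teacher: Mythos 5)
Your proposal follows essentially the same route as the paper's proof: identify the vertical distribution with $f=\p/\q$ and the horizontal one with the unique $Q_0$-invariant complement $h\subseteq\g/\q$, verify isotropy of both and that the cross-term is the Killing-form pairing $f\cong\p_+(2)$ versus $h\cong\g/\p$, all via the matrix realisations of Appendix \ref{matrices}. The normalisation issue you flag as the main obstacle is settled in the paper by computing the invariant quadratic form explicitly (it is $Y^tX-zw$ in the coordinates of \eqref{g/q}) and observing it is invariant, not merely conformally invariant, under $G_0^{ss}\cap Q$, which pins down the reduced-scale metric on the nose.
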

\begin{proof} Within the proof we refer to the notation and explicit matrix realisations from Appendix~\ref{appendixB}. By def\/inition, the Fef\/ferman space is $\wt{M}=\G/Q$, which yields $T\wt{M}\cong\G\x_Q\g/\q$. Conformally invariant objects on~$\wt{M}$, respectively objects related to a choice of reduced scale, correspond to data on $\g/\q\cong\ti\g/\ti\p$ that are invariant under the action of~$Q$, respectively $G_0^{ss}\cap Q$. Elements in $\g/\q$ will be represented by matrices of the form
\begin{gather*}
\pmat{-\frac{z}2&*&*\\X&*&*\\w&Y^t&-\frac{z}2},
\end{gather*}
where $z,w\in\rr$ and $X,Y\in\rr^{n-1}$. Firstly, one verif\/ies that
\begin{gather} \label{kl-mn}
Y^tX-zw,
\end{gather}
is the only quadratic form that is invariant under $G_0^{ss}\cap Q$. Hence any reduced-scale metric in~$\mb{c}$ corresponds to the quadratic form~\eqref{kl-mn} in a suitable frame. Secondly, the vertical subbundle $V\subseteq T\wt{M}$ corresponds to the $Q$-invariant subspace $f=\p/\q\subseteq\g/\q$ given by $X=0$ and $w=0$. The horizontal distribution $H\subseteq T\wt{M}$ induced by a linear connection from $\mb{p}$ corresponds to the unique $(G_0^{ss}\cap Q)$-invariant subspace $h\subseteq\g/\q$ complementary to~$f$, which is given by $Y=0$ and $z=0$. Obviously, both $f$ and $h$ are isotropic with respect to~\eqref{kl-mn}. Hence any reduced-scale metric in $\mb{c}$ satisf\/ies the condition (a) from Def\/inition~\ref{walker}. Thirdly, the canonical identif\/ication $V\cong T^*M(2)$ corresponds to an isomorphism $f\cong(\g/\p)^*(2)$ of $Q$-modules. Identifying $(\g/\p)^*(2)$ with $\p_+(2)$, it turns out to be given by
\begin{gather*}
\pmat{-\frac{z}2&*&*\\0&*&*\\0&Y^t&-\frac{z}2}
\mapsto \pmat{0&Y^t&-z\\0&0&0\\0&0&0}.
\end{gather*}
Now, the inner product of any $v\in f$ and $u\in h$ coincides with the pairing of the corresponding elements $v\in\p_+(2)$ and $u\in\g/\p$. Hence any reduced-scale metric in $\mb{c}$ satisf\/ies also the condition~(b) from Def\/inition~\ref{walker} and so it is a Patterson--Walker metric.
\end{proof}

\subsection{Alternative characterisation}\label{alternative}
We have characterised split-signature $(n,n)$ conformal structures $\mb c$ on $\wt{M}$ induced by an $n$-dimensional projective structure via the Fef\/ferman-type construction in Theorem \ref{thm-char-trac}. Now we know these structures correspond to conformal Patterson--Walker metrics discussed in~\cite{hsstz-walker}. There we found the following characterisation in terms of underlying objects by direct computations and spin calculus. Our aim here is to indicate how to reach the same result in the current framework.

\begin{Theorem}\label{char-refin} A split-signature $(n,n)$ conformal spin structure $\mb c$ on a manifold $\wt{M}$ is $($locally$)$ induced by an $n$-dimensional projective structure via the Fefferman-type construction if and only if the following properties are satisfied:
\begin{enumerate}\itemsep=0pt
\item[$(a)$] $\big(\wt{M},\mb{c}\big)$ admits a nowhere-vanishing light-like conformal Killing field $k$.
\item[$(b)$] $\big(\wt{M},\mb{c}\big)$ admits a pure twistor spinor $\chi$ such that $\wt{f}=\ker\chi$ is integrable and
$k\in\Ga\big(\wt{f}\big)$.
\item[$(c)$] The Lie derivative of $\chi$ with respect to the conformal Killing field $k$ is
$\mathcal{L}_k \chi = -\frac12 (n+1) \chi$.
\item[$(d)$] The following integrability condition holds:
\begin{gather}\tag{\ref{condition-weyl0}}\label{condition-weyl}
v^rw^s \wt{W}_{arbs}=0, \qquad \mbox{for all}\ v^r, w^s\in\Gamma(\ker\chi).
\end{gather}
\end{enumerate}
\end{Theorem}

We now express the conditions from Proposition \ref{prop-properties} in underlying terms:

(i) For a conformal Killing f\/ield $k$ with the splitting $\mathbf{K}$ as in \eqref{Kslots}, a~straightforward computation shows that the condition $\mathbf{K}^2 = \id_{\wt{\mc{T}}}$ is equivalent to
\begin{alignat}{3}
& k^a k_a = 0 , \qquad && \rho^a \rho_a = 0 ,& \nonumber \\
& \mu \ind{^a_b} k^b = \varphi k^a , \qquad && \mu \ind{^a_b} \rho^b = - \varphi \rho^a , & \nonumber\\
& k^a \rho_a = \varphi^2 - 1 , \qquad && \mu \ind{_a^c} \mu_{cb} = g_{ab} + 2 k_{(a} \rho_{b)} .& \label{Ksquared}
\end{alignat}

(ii) For a twistor spinor $\chi$, the corresponding tractor spinor $\mb{s}_F= \left(\begin{smallmatrix} \bar\chi \\ \chi \end{smallmatrix}\right)\in\Ga(\wt{\mc{S}}_{-})$ is parallel with respect to $\wt{\na}^{\rm nor}$. In particular, purity of $\mb{s}_F$ can be checked at one point. If $\chi=0$, respectively $\bar\chi=0$, this tractor spinor is pure whenever $\bar\chi$, respectively $\chi$, is pure. If $\chi\not=0$ and $\bar\chi\not=0$, the purity of $\left(\begin{smallmatrix} \bar\chi \\ \chi \end{smallmatrix}\right)$ is equivalent to $\chi$ and $\bar\chi$ being pure and their kernels having $(n-1)$-dimensional intersection, cf.\ \cite[Proposition~III-1.12]{chevalley-spinors} or~\cite{hughston-mason,Taghavi-Chabert2015}.

(iii) Let $k$ be a conformal Killing f\/ield which splits to $\mathbf{K}$ and $\chi$ a twistor spinor which splits to $\mb{s}_F$. Then the condition
$\mc{L}_k\chi=-\frac{1}{2}(n+1)\chi$ is equivalent to $\mb{K}\bdot \mb{s}_F=-\frac{1}{2}(n+1)\mb{s}_F$. If the tractor spinor $\mb{s}_F$ is pure it has an $(n+1)$-dimensional maximally isotropic kernel $\ker \mb{s}_F$. Then $\mb{K}\bdot \mb{s}_F=-\frac{1}{2}(n+1)\mb{s}_F$ is equivalent to $\mb{K}$ acting by minus the identity on $\ker \mb{s}_F$, which therefore coincides with the eigenspace of $\mb{K}$ corresponding to~$-1$.

The assumption on the pure twistor spinor $\chi$ in Theorem~\ref{char-refin} guarantees the existence of a suitable compatible metric for which $\chi$ is parallel. This result is proved in \cite[Proposition~4.2]{hsstz-walker}. Henceforth we shall assume $\wt{D}\chi=0$ where $\wt{D}$ is the corresponding Levi-Civita connection. In particular, we have $\mb{s}_F=L_0^{\wt{\mc{S}}_-}(\chi) = \smat{0 \\ \chi}$, which is pure and parallel by observation (ii). Expanding the latter condition according to \eqref{traCli} yields
\begin{gather}\label{rhocondition2}
v^a\wt{\Rho}_{ab}=0, \qquad \text{for all $v^a\in \Ga(\ker\chi)$},
\end{gather}
For $\mb{K} = L_0^{\mc{A}\wt{M}}(k)$, we know from observation (iii) that $\mb{K}$ acts by $-\id$ on $\ker \mb{s}_F$. By the very same reasoning as in the f\/irst part of the proof of Lemma~\ref{Kslotsr} it follows that
\begin{gather}\label{muv}
\rh_a=0,\qquad \ph=-1,\qquad \mu_a{}^r v_r = -v_a, \qquad \text{for all $v^a \in \Ga(\ker\chi)$}.
\end{gather}
Now we are prepared to prove the theorem:

\begin{proof}[Proof of Theorem \ref{char-refin}] If $(\wt{M},\mb{c})$ is induced by a projective structure, the stated properties hold according to Proposition~\ref{prop-properties} and previous observations. For the converse direction, it remains to show that $\mb{K}^2 = \id_{\wt{\mc{T}}}$, which is equivalently characterised by the identities~\eqref{Ksquared}. Then the properties (a)--(d) of Proposition~\ref{prop-properties} will be satisf\/ied and the result will follow from the characterisation Theorem~\ref{thm-char-trac}.

The expansion of the prolonged conformal Killing equation \eqref{cKf} for $k$ gives
\begin{gather}
\wt{D}_a k_b = \mu_{ab} + g_{ab}, \label{prolong1} \\
\wt{D}_a\mu_{br} = -2 \wt{\Rho}_{a[b} k_{r]} - W_{bras}k^s, \label{prolong2}
\end{gather}
according to \eqref{conadj} and \eqref{curvature}. Next, from \eqref{muv} we especially have $\mu_b{}^r k_r = -k_b$. Applying $\wt{D}_a$ to both sides of this equality and using \eqref{prolong1} we obtain
\begin{gather*}
\big(\wt{D}_a\mu_{br}\big) k^r + \mu_b{}^r \mu_{ar} + \mu_{ba} = -(\mu_{ab} + g_{ab}).
\end{gather*}
From \eqref{prolong2}, \eqref{condition-weyl} and \eqref{rhocondition2} we have $\big(\wt{D}_a\mu_{br}\big) k^r =0$, hence the previous display shows $\mu_a{}^r \mu_{rb} = g_{ab}$. This together with \eqref{muv} implies that all identities from~\eqref{Ksquared} are satisf\/ied, hence $\mb{K}^2 = \id_{\wt{\mc{T}}}$.
\end{proof}

\appendix

\section{Explicit matrix realisations}\label{appendixB}
Here we provide explicit realisations of the Lie algebras introduced in Section~\ref{construction} in terms of matrices. We will consider the inner product $h$ and the involution $K$ on $\rr^{n+1,n+1}$ given by
the block matrices
\begin{gather*}
 h:= \begin{pmatrix}
 0 & I_{n+1} \\
 I_{n+1} & 0
\end{pmatrix}
\qquad\text{and}\qquad
K:= \begin{pmatrix}
 I_{n+1} & 0 \\
 0 & -I_{n+1}
\end{pmatrix}
\end{gather*}
with respect to the standard basis $(e_1,\dots,e_{2n+2})$. Then $E=\langle e_1,\dots,e_{n+1}\rangle$ and $F=\langle e_{n+2},\dots$, $e_{2n+2}\rangle$ and the decomposition~\eqref{tig-decomp} can be written as
\begin{gather*}
\ti\g=\Lambda^2 (E\oplus F)= \begin{pmatrix}
 E\otimes F& \Lambda^{2}E\\
 \Lambda^{2}F &E\otimes F
 \end{pmatrix}.
\end{gather*}
For $\ti v:=e_1+e_{2n+2}$, the Lie algebra $\ti\p$ of the parabolic subgroup $\wt{P}\subseteq\wt{G}$ is of the following form
\begin{gather}\label{parabolic}
\ti\p=
\left(\begin{array}{@{}ccccccc@{}}
 a & U^t & w & \vline & 0 & -W^t & -b \\
 X & B & V & \vline & W & C & -X \\
 0 & Y^t & c & \vline & b & X^t & 0 \\
 \hline
 0 & -Y^t & -d & \vline & -a & -X^t & 0 \\
 Y & D & -Z & \vline & -U & -B^t & -Y \\
 d & Z^t & 0 & \vline & -w & -V^t & -c
\end{array}\right),
\end{gather}
where $a,b,c,d,w\in\rr$ with $a-b=d-c$, $U, V, W, X, Y, Z\in\rr^{n-1}$, $B\in\gl(n-1)$ and $C,D\in\so(n-1)$.
The nilradical $\ti\p_+=\ti\p^{\perp}$ is then of the form
\begin{gather*}
\ti\p_+=
\left(\begin{array}{@{}ccccccc@{}}
 a & U^t & w & \vline & 0 & -V^t & -a \\
 0 & 0 & V & \vline & V & 0 & 0 \\
 0 & 0 & a & \vline & a & 0 & 0 \\
 \hline
 0 & 0 & -a & \vline & -a& 0 & 0 \\
 0 & 0 & -U & \vline & -U & 0& 0 \\
 a & U^t & 0 & \vline & -w & -V^t & -a
 \end{array}\right).
\end{gather*}
A choice of Levi subalgebra $\ti\g_0\subseteq\ti\p$ determines a grading $\ti\g=\ti\g_-\oplus\ti\g_0\oplus\ti\p_+$.
We shall choose $\ti\g_0=\ti\p\cap\ti\p_{\rm op}$, where $\ti\p_{\rm op}\subseteq\ti\g$ is the stabiliser of the light-like vector $e_{n+2}$.
Explicitly,
\begin{gather*}
\ti\g_0=
\left(\begin{array}{@{}ccccccc@{}}
 a & 0 & 0 & \vline & 0 & 0 & 0\\
 X & B & V & \vline & 0 & C & -X\\
 0 & Y^t & c & \vline & 0 & X^t & 0\\
 \hline
 0 & -Y^t & -a-c & \vline & -a & -X^t & 0\\
 Y & D & -Z & \vline & 0 & -B^t & -Y\\
 a+c & Z^t & 0 & \vline & 0 & -V^t & -c
 \end{array}\right).
\end{gather*}

The embedding $i'\colon \g\embed\ti{\g}$ of Lie algebras has the form
$A \mapsto \left(\begin{smallmatrix} A& 0 \\ 0 & -A^t \end{smallmatrix}\right)$.
The subgroup $Q=i^{-1}(\wt{P})$ is contained in $P$, the stabiliser in $G$ of $v=(\ti{v})_E=e_1$; the inclusion of corresponding Lie algebras is
\begin{gather*}
 \q=\g\cap\ti{\p}=\pmat{a&U^t&w\\0&A&V\\0&0&-a}\subseteq\pmat{a&U^t&w\\0&B&V\\0&X^t&c}=\p,
\end{gather*}
where $\tr(A)=0$ and $a+\tr(B)+c=0$. The standard projective grading $\g=\g_-\oplus\g_0\oplus\p_+$,
\begin{gather*}
\g_{-} = \pmat{0&0&0\\X&0&0\\y&0&0}, \qquad
\g_0= \pmat{a&0&0\\0&B&V\\0&X^t&c}, \qquad
\p_+= \pmat{0&U^t&w\\0&0&0\\0&0&0}, \qquad
\end{gather*}
is compatible with the previous conformal grading so that the reduced Lie subalgebra $\q_0:=\q\cap\g_0$ coincides with the intersection of $\g_0\cap\ti\g_0$. Explicitly,
\begin{gather}\label{q0}
\q_0=\pmat{a&0&0\\0&A&V\\0&0&-a},
\end{gather}
where $\tr(A)=0$.

\subsubsection*{Acknowledgements}
The authors express special thanks to Maciej Dunajski for motivating the study of this construction and for a number of enlightening discussions on this and adjacent topics. KS would also like to thank Pawe{\l}~Nurowski for drawing her interest to the subject and for many useful conversations. MH gratefully acknow\-led\-ges support by project P23244-N13 of the Austrian Science Fund (FWF) and by `Forschungsnetzwerk Ost' of the University of Greifswald. KS gratefully acknowledges support from grant J3071-N13 of the Austrian Science Fund (FWF). J\v{S}~was supported by the Czech science foundation (GA\v{C}R) under grant P201/12/G028. AT-C was funded by GA\v{C}R post-doctoral grant GP14-27885P. V\v{Z}~was supported by GA\v{C}R grant GA201/08/0397. Finally, the authors would like to thank the anonymous referees for their helpful comments and recommendations.

\addcontentsline{toc}{section}{References}
\LastPageEnding

\end{document}